\theoremstyle{plain}
\numberwithin{figure}{section}
\newcommand{\HH}{\mathbb{H}}
\newtheorem{theorem}{Theorem}[section]
\newtheorem{lemma}[theorem]{Lemma}
\newtheorem{remark}[theorem]{Remark}
\newtheorem{proposition}[theorem]{Proposition}
\newtheorem{definition}[theorem]{Definition}
\newtheorem{corollary}[theorem]{Corollary}
\newtheorem{example}[theorem]{Example}
\newtheorem{claim}[theorem]{Claim}
\DeclareMathOperator{\Isom}{{\mathrm Isom}}
\DeclareMathOperator{\Hull}{{\mathrm Hull}}
\DeclareMathOperator{\stab}{{\mathrm stab}}
\DeclareMathOperator{\cusp}{{\mathrm cusp}}
\def\La{\Lambda}
\def\Ga{\Gamma}
\def\ga{\gamma}
\def\R{\mathbb R}
\def\N{\mathcal N}
\def\V{\mathcal{V}}
\def\A{\mathcal{A}}
\def\U{\mathcal U}
\def\bN{\mathbb N}
\def\mpar#1{}
\def\geo{\partial_{\infty}}
\newcommand{\Op}{O(\eta+e^{-\ell_{\gamma}/2})}
\title{Uniform spectral gap and orthogeodesic counting for strong convergence of Kleinian groups}
\author{Beibei Liu}
\address{Department of Mathematics, The Ohio State University, Columbus, OH 43210, USA}
\email{bbliumath@gmail.com}
\author{Franco Vargas Pallete}
\address{Department of Mathematics, Yale University, New Haven, CT 06511, USA}
\email{franco.vargaspallete@yale.edu}
\subjclass[2020]{}
\date{\today}
\thanks{}
\begin{document}
\maketitle
\begin{abstract}
   We show convergence of small eigenvalues for geometrically finite hyperbolic $n$-manifolds under strong limits. For a class of convergent convex sets in a strongly convergent sequence of Kleinian groups, we use the spectral gap of the limit manifold and the exponentially mixing property of the geodesic flow along the strongly convergent sequence to find asymptotically uniform counting formulas for the number of orthogeodesics between the convex sets. In particular, this provides asymptotically uniform counting formulas (with respect to length) for orthogeodesics between converging Margulis tubes, geodesic loops based at converging basepoints, and primitive closed geodesics.
\end{abstract}

\section{Introduction}

The critical exponent of a discrete isometry subgroup of the hyperbolic space $\HH^{n}$ is an important numerical invariant which relates the dynamical properties of the group action to the measure theory and the spectrum of operators on the quotient manifold via the  celebrated work of Patterson and Sullivan \cite{Patterson, Sullivan1, Sullivan2}. 
More explicitly, this invariant was shown to be equal to the Hausdorff dimension of the limit set for any geometrically finite discrete isometry subgroup $\Ga$ \cite{Sullivan1, Sullivan2}, and is related to the bottom spectrum $\lambda_{0}$  of the negative  Laplace operator for any nonelementary complete hyperbolic manifold \cite{Sullivan3}. 
A natural line of inquiry is to ask whether this quantitative invariant can be  uniformly controlled for a sequence of hyperbolic manifolds $(M_k=\HH^{n}/ \Ga_{k})_{k\in\mathbb{N}}$, for example, sequences of quasi-Fuchsian manifolds in Bers' model for the Teichm\"uller space of a surface $S$. It turns out that the critical exponent of $\Ga_{k}$,  the Hausdorff dimension of the limit set,  and the bottom of the spectrum $\lambda_{0}(\HH^{n}/ \Ga_{k})$, converge to the ones of the limit group $\Ga<\Isom(\HH^{n})$ under the assumption that $\Ga$ is geometrically finite and $\delta(\Ga)>(n-1)/2$  for strongly  convergent sequences of hyperbolic manifolds $(M_k)_{k\in \bN}$ \cite{CanaryTaylor, McMullen99}. See Section \ref{sec:convergenceconvex} for the definition of strong convergence. 
 
Besides the bottom spectrum of the quotient manifold, there are finitely many small eigenvalues of the negative Laplace operator in the interval $[\lambda_{0}, (n-1)^{2}/4]$, where  $(n-1)^{2}/4$ is the bottom spectrum of the hyperbolic space $\HH^{n}$ \cite{LaxPhillips}. It is natural to ask whether these small eigenvalues  converge to the ones of $\Ga$, respectively. We prove the convergence of small eigenvalues for strongly convergent sequences of hyperbolic manifolds $(M_k=\HH^{n}/ \Ga_{k})_{k\in\mathbb{N}}$. In particular, we give a uniform bound on the \emph{Lax-Phillips spectral gap} $s_1$ defined by $s_1:=\min\lbrace \lambda_1(M), (n-1)^2/4\rbrace - \lambda_0(M)$, where $\lambda_{1}(M)$ is the smallest eigenvalue of the negative Laplacian in $(\lambda_{0}(M), \infty)$. 
 
\begin{theorem}
\label{thm:uniformgap}
Suppose that $(M_k=\Isom(\HH^{n})/ \Ga_k)_{k\in\mathbb{N}}$ is a sequence of hyperbolic manifolds which converges strongly to a geometrically finite hyperbolic manifold $M=\HH^{n}/ \Ga$. The set of small eigenvalues in $[\lambda_{0}(M_k), (n-1)^{2}/4]$ converges to the small eigenvalues of the limit manifold $M$, counting multiplicities. In particular, the sequence of Lax-Phillips spectral gaps of $(M_k)_{k\in \bN}$ converges to that of the limit manifold $M$. 
\end{theorem}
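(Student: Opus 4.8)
The plan is to establish the convergence of small eigenvalues by proving two complementary semicontinuity statements: upper semicontinuity (every eigenvalue of the limit $M$ is approximated by eigenvalues of the $M_k$, with at least the same multiplicity) and lower semicontinuity (no eigenvalues of the $M_k$ can accumulate below the spectrum of $M$ except at genuine eigenvalues of $M$). Since strong convergence $\Ga_k \to \Ga$ gives, on the one hand, geometric convergence (so that large compact pieces of the convex core of $M$ embed almost isometrically in $M_k$ for $k$ large) and, on the other hand, algebraic convergence (so that generators of $\Ga$ are limits of generators of $\Ga_k$), I would set up a correspondence between $L^2$-eigenfunctions on $M$ and approximate eigenfunctions on $M_k$ via cutoff and transport through the almost-isometry.

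**For upper semicontinuity**, I would take an $L^2$-eigenfunction $\phi$ on $M$ with eigenvalue $\lambda \le (n-1)^2/4$; since $\lambda$ is a small eigenvalue, $\phi$ decays in the cusps and is essentially supported (up to $\varepsilon$ in $H^1$ norm) on a compact set $K \subset M$. Geometric convergence provides, for $k$ large, a smooth embedding $f_k \colon K \hookrightarrow M_k$ that is a $(1+\varepsilon_k)$-bi-Lipschitz diffeomorphism onto its image with $\varepsilon_k \to 0$. Pushing forward a cutoff $\chi_K \phi$ by $f_k$ produces a test function $\phi_k$ on $M_k$ with Rayleigh quotient within $o(1)$ of $\lambda$; the min-max principle then forces $M_k$ to have an eigenvalue near $\lambda$. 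Handling multiplicity $m$ requires doing this with an $L^2$-orthonormal basis $\phi^{(1)}, \dots, \phi^{(m)}$ of the $\lambda$-eigenspace simultaneously: the transported functions stay almost orthonormal, so by min-max $M_k$ has $m$ eigenvalues (with multiplicity) near $\lambda$. One must also rule out eigenvalues of $M_k$ in $[\lambda_0(M_k), (n-1)^2/4]$ that run off to values not in the spectrum of $M$ — this is the content of lower semicontinuity.

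**For lower semicontinuity**, suppose $\lambda_k$ is a small eigenvalue of $M_k$ with normalized eigenfunction $\psi_k$ and $\lambda_k \to \mu \le (n-1)^2/4$. The key point is an equi-decay estimate: because the thin parts (cusp neighborhoods and short Margulis tubes) of the $M_k$ converge geometrically to those of $M$ (again using strong convergence), and because on a hyperbolic cusp or tube the bottom of the $L^2$-spectrum acting on functions with controlled boundary data is bounded below, the $L^2$ mass of $\psi_k$ outside a fixed compact set $K$ (independent of $k$) is uniformly small — this uses $\mu < (n-1)^2/4$ together with a Phragmén–Lindelöf / separation-of-variables argument on the model cusp, exactly as in Lax–Phillips. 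Then $f_k^{-1}$ transports $\psi_k|_{f_k(K)}$ back to $M$; after passing to a subsequence, these converge weakly in $H^1_{loc}$ and strongly in $L^2_{loc}$ (Rellich) to a nonzero function $\psi$ on $M$ satisfying $\Delta \psi = \mu \psi$ weakly, and the equi-decay guarantees $\psi \in L^2(M)$ and $\|\psi\|_2 = 1$, so $\mu$ is an eigenvalue of $M$. Matching multiplicities both ways then pins down the full list of small eigenvalues with multiplicity, and the statement about the Lax–Phillips gap $s_1 = \min\{\lambda_1, (n-1)^2/4\} - \lambda_0$ follows immediately, using the already-cited convergence $\lambda_0(M_k) \to \lambda_0(M)$ of \cite{CanaryTaylor, McMullen99} (valid since $\delta(\Ga) > (n-1)/2$ is implied by $\lambda_0(M) < (n-1)^2/4$, i.e.\ by the existence of a small eigenvalue; the degenerate case where $M$ has no small eigenvalues below $(n-1)^2/4$ is handled separately and trivially).

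**The main obstacle** I anticipate is the uniform decay estimate in the cusps and thin tubes: one needs that the eigenfunctions $\psi_k$ cannot concentrate mass in the (possibly degenerating) thin parts, uniformly in $k$, and that no spurious eigenvalue is created by a rank-one cusp opening up or a tube core geodesic shrinking. This requires a careful analysis of the model geometries (horoball quotients and solid tubes) with an eye on how their moduli vary along the strong limit, controlling the Neumann-type spectrum of the truncated pieces from below independently of $k$ — this is where the hypothesis that $M$ is geometrically finite and the precise nature of strong (as opposed to merely geometric or algebraic) convergence are essential.
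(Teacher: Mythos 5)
Your overall architecture -- two semicontinuity statements, transporting (approximate) eigenfunctions between $M$ and $M_k$ via the almost-isometries from strong convergence, with a uniform mass-concentration estimate as the crux -- is exactly the architecture of the paper (Proposition \ref{prop:coremass}, Lemma \ref{lem:eigensubsequence}, Theorem \ref{thm:spectralconvergence}, Theorem \ref{thm:Lax-Phillipsconvergence}). Where you diverge is in how you propose to prove the key equi-decay estimate, which you correctly identify as the main obstacle. You suggest a Lax--Phillips-style separation-of-variables / Phragm\'en--Lindel\"of analysis on the explicit model geometries of cusps and tubes, tracking how their moduli degenerate along the sequence. The paper instead proves Proposition \ref{prop:coremass} by an IMS-type localization: one chooses three $C^1$ cutoffs $u,\alpha$ (adapted to the convex core) and $v,\beta$ (adapted to the thick--thin decomposition) with $u^2+\alpha^2\equiv 1$, $v^2+\beta^2\equiv 1$, splits $f^2 = (uvf)^2 + (u\beta f)^2 + (\alpha f)^2$, controls the cross-terms by $O(1/r)$, and bounds the Rayleigh quotients of the two ``far'' pieces from below by $(\tanh r)^2(n-1)^2/4$ using the spectral estimates of Lemmas \ref{lemma:loxodromic} and \ref{lemma:expends} for the thin part and the complement of the convex core. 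This is more robust than what you propose: it never looks at explicit cusp/tube models, so it applies verbatim in pinched negative curvature and to possibly disconnected limits, and the constants are manifestly independent of $k$ because they depend only on the curvature bounds and the ambient dimension. Your model-geometry route would in principle work for hyperbolic manifolds but would require exactly the ``careful analysis of moduli varying along the strong limit'' that you flag as worrisome, and it would not extend to the variable-curvature generality the paper actually establishes.

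One small gap to note: in your upper-semicontinuity step you push forward $\chi_K\phi$ for a single orthonormal basis of the $\lambda$-eigenspace and invoke min--max, but you also need to rule out the transported functions picking up spurious components along eigenfunctions of $M_k$ with eigenvalues far from $\lambda$; the paper handles this by first building the subspace $V'$ of limits of $V_k$, projecting the transported test function orthogonally to $V_k$, and showing the projection still has Rayleigh quotient close to $\lambda$ -- this forces the existence of a new eigenvalue in the window, giving the contradiction. Your sketch does not address this projection step, which is what actually makes the multiplicity count close up. It is fixable along the lines you indicate, but as written the argument for $\liminf_k m_{\lambda,k}\ge m_\lambda$ is incomplete.
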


\begin{remark}
We explain what the convergence of the set of small eigenvalues means in Section \ref{sec:eigenvalues}, and leave the precise statement in Theorem \ref{thm:spectralconvergence}. The statement of Theorem \ref{thm:uniformgap} for small eigenvalues holds for negatively pinched manifolds, and the details are discussed in Section \ref{sec:eigenvalues}. The statement referring to Lax-Phillips spectral gap is done in Theorem \ref{thm:Lax-Phillipsconvergence} for Kleinian groups. It could be possible that the set of small eigenvalues is equal to the singleton $\lbrace (n-1)^2/4\rbrace$ (or the empty set by considering pinched negative manifolds), but it won't affect the statement of the theorem. 

\end{remark}

Sequences of hyperbolic manifolds with uniform spectral gap are  interesting to study, as the uniform spectral gap sometimes controls the dynamical properties of the geodesic flow of the manifold. For instance, following \cite{EdwardsOh21},  uniform spectral gaps of hyperbolic manifolds imply uniform exponential mixings of geodesic flows. In the same paper, they provided another family of hyperbolic manifolds with uniform spectral gaps, coming from congruence subgroups of certain arithmetic lattice of $\Isom(\HH^{n})$.

The exponentially mixing geodesic flow can be used to find good estimates for error in asymptotic approximations of counting functions, such as the estimates available for orthogeodesic counting (as done in \cite{ParkkonenPaulin21}). Namely, given $D^-,D^+$ (locally) convex sets (or equivalently, $\pi_1(M)$ precisely invariant  convex sets in the universal covering) in $M$, one can estimate $\mathcal{N}_{D^-,D^+}(t)$, the number of orthogeodesics between $D^-$ and $D^+$ of length less than $t>0$, by
\[\mathcal{N}_{D^-,D^+}(t) \approx Ae^{\delta t}(1+O(e^{-\kappa t}))
\]
where $A,\delta,\kappa$ and $O(.)$ depend on the geometric/dynamical features of $M, D^-, D^+$, with exponential decay of correlations among these features. We consider the following two interesting cases in this paper:
\begin{enumerate}
\item $D^{\pm}$ are connected components in the thin part of $M$, i.e. Margulis tubes or cusps. 
\item $D^{+}=D^{-}$ is an embedded ball at a given point $x\in M$. That is, the lifts of $D^{\pm}$ are sufficiently small balls of lifts of $x$ in $\HH^{n}$. 

\end{enumerate}

The uniform orthogeodesic counting formula  for strongly convergent sequences in case (1) can be used in the study of  the renormalized volume. Given a hyperbolic manifold $M$,  the renormalized volume is a function on the deformation space of $M$ whose gradient flow has been of interest (see \cite{BBB19}, \cite{BBVP21}). In \cite{BBVP21} it is shown that for $M$ acylindrical the gradient flow of the renormalized volume converges to the unique critical point. This involves discarding strong limits with pinched rank-$1$ cusps by the use of the Gardiner formula. For such a method to work one needs a uniform control of contributing terms in the Gardiner formula, which would be provided by uniform orthogeodesic counting. The uniform orthogeodesic counting formula for case (2) gives a uniform asymptotic counting result with uniform error term  for geodesic loops based on a given point in $M$. 

Motivated by these applications, we show that the parameters $A,\delta,\kappa$ and $O(.)$ are uniform for strongly convergent  sequences, and such parameters can be taken arbitrarily close to the corresponding parameters of the geometrically finite limit.  

\begin{theorem}\label{thm:uniformcounting}
Let  $(M_{k}=\HH^{n}/ \Ga_{k})_{k\in\mathbb{N}}$ be a sequence of hyperbolic manifolds  which  strongly converges to a geometrically finite hyperbolic manifold $M=\HH^{n}/\Ga$ with $\delta(\Ga)>(n-1)/2$.
\begin{enumerate}
\item Suppose that $D^{\pm}_{k}$ are connected components in the thin part of $M_k$, and $(D^{\pm}_{k})_{k\in \bN}$ converge strongly to connected components $D^{\pm}$ in the thin part of $M$. Then there is a uniform counting formula for orthogeodesics between $D^{-}_{k}$ to $D^{+}_{k}$ for the sequence $(M_k)_{k\in\mathbb{N}}$. 
\item Suppose that $(x_k\in M_k)_{k\in\mathbb{N}}$ is a sequence of points converging to the point $x\in M$. Then there is a uniform counting formula for geodesic loops based at $x_k$ for the sequence $(M_k)_{k\in\mathbb{N}}$. 
\end{enumerate}
\end{theorem}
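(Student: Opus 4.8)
The plan is to deduce Theorem~\ref{thm:uniformcounting} from two ingredients: first, uniform exponential mixing of the geodesic flows on $T^1 M_k$, which follows from the uniform Lax--Phillips spectral gap of Theorem~\ref{thm:uniformgap} together with the implication (uniform spectral gap) $\Rightarrow$ (uniform exponential mixing of the geodesic flow with respect to the Bowen--Margulis--Sullivan measure) in the spirit of \cite{EdwardsOh21}; and second, the convergence of all the geometric and measure-theoretic quantities that enter the orthogeodesic counting asymptotics of Parkkonen--Paulin \cite{ParkkonenPaulin21}. Recall that for precisely invariant (locally) convex sets $D^\pm$ in a hyperbolic manifold $N$ with finite and mixing BMS measure, \cite{ParkkonenPaulin21} gives
\[
\mathcal{N}_{D^-,D^+}(t) \;=\; \frac{\|\sigma_{D^-}\|\,\|\sigma_{D^+}\|}{\delta\,\|m_{\mathrm{BMS}}\|}\; e^{\delta t}\,\bigl(1+O(e^{-\kappa t})\bigr),
\]
where $\delta=\delta(N)$, $\sigma_{D^\pm}$ are the skinning measures on $\partial D^\pm$, $m_{\mathrm{BMS}}$ is the BMS measure of $N$, and both $\kappa>0$ and the implied constant are governed by the mixing rate of the geodesic flow and the regularity of $\partial D^\pm$. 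It therefore suffices to show that, along a strongly convergent sequence, each of $\delta(\Gamma_k)$, $\|m_{\mathrm{BMS}}^{k}\|$, $\|\sigma_{D^\pm_k}\|$, the mixing rate, and the implied constant converges to (or is eventually controlled by) the corresponding quantity for the geometrically finite limit $M$.

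The convergence $\delta(\Gamma_k)\to\delta(\Gamma)$ under strong convergence with $\delta(\Gamma)>(n-1)/2$ is due to \cite{CanaryTaylor,McMullen99}; in particular $\delta(\Gamma_k)>(n-1)/2$ for all large $k$, which together with the uniform spectral gap ensures that the $m_{\mathrm{BMS}}^{k}$ are finite and the geodesic flows are mixing, so the Parkkonen--Paulin machinery applies uniformly along the sequence. For the measures I would argue as follows. Strong convergence provides a geometric convergence $M_k\to M$ that is uniformly controlled on thick parts and, crucially, near the cusps (in the sense made precise in Section~\ref{sec:convergenceconvex}); combined with McMullen's convergence of Hausdorff dimensions and Patterson--Sullivan densities \cite{McMullen99}, this yields weak-$*$ convergence of the normalized Patterson--Sullivan measures and hence convergence of the BMS measures on $T^1M_k\to T^1M$. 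The one point that must be ruled out is escape of BMS mass into the cusps; this is exactly where the \emph{geometrically finite} limit is used, since Sullivan-type shadow estimates, applied with the uniformly controlled cusp geometry furnished by strong convergence, give a bound on the BMS mass of a cusp neighborhood that is uniform in $k$ and small as the cusp depth grows, whence $\|m_{\mathrm{BMS}}^{k}\|\to\|m_{\mathrm{BMS}}\|$. The skinning measures $\sigma_{D^\pm_k}$ are built from $D^\pm_k$ and the Patterson--Sullivan density, so once $D^\pm_k\to D^\pm$ geometrically and the densities converge, $\|\sigma_{D^\pm_k}\|\to\|\sigma_{D^\pm}\|$ as well, with no escape of mass since in each case below $\partial D^\pm_k$ lies in a controlled compact region.

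For the two cases this gives the following. In case (1), $D^\pm_k$ is a component of the thin part, hence precisely invariant, with compact boundary (a Margulis torus, or a quotient horospherical cross-section) on which the skinning measure is finite; strong convergence $D^\pm_k\to D^\pm$ forces all of $\delta(\Gamma_k)$, $\|m^{k}_{\mathrm{BMS}}\|$, $\|\sigma_{D^\pm_k}\|$, and the mixing data to converge to those of $D^\pm\subset M$, yielding the uniform counting formula for orthogeodesics from $D^-_k$ to $D^+_k$. In case (2), fix a small radius $r$ and let $D^\pm_k$ be the metric ball of radius $r$ about $x_k$; since $x_k\to x$ in $M$ there is a uniform lower bound on the injectivity radius at $x_k$, so for all large $k$ this ball is embedded and its lifts are disjoint round balls, and once $r$ is below that injectivity bound the common perpendiculars from $D^-_k$ to $D^+_k$ are in length-preserving bijection with the geodesic loops based at $x_k$. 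The skinning measures of such balls reduce to (multiples of) the Patterson--Sullivan measures based at $x_k$, which converge to that based at $x$; the same formula then produces a uniform asymptotic count, with uniform error term, for geodesic loops at $x_k$.

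The main obstacle I anticipate is not the soft convergence of $\delta$, of the Patterson--Sullivan measures, or of the skinning measures, but the \emph{quantitative uniformity}. Two points carry the weight: (a) upgrading ``(uniform spectral gap) $\Rightarrow$ (uniform exponential mixing)'' to a genuinely uniform decay of correlations, i.e. constants $C,\kappa>0$ with $\bigl|\int \phi\,(\psi\circ g_t)\,dm^{k}_{\mathrm{BMS}} - \tfrac{1}{\|m^{k}_{\mathrm{BMS}}\|}\int\phi\,dm^{k}_{\mathrm{BMS}}\int\psi\,dm^{k}_{\mathrm{BMS}}\bigr|\le C e^{-\kappa t}\,\mathcal{S}(\phi)\,\mathcal{S}(\psi)$ holding simultaneously for all large $k$, which requires the geometry of $M_k$ (and especially of its cusps) to be uniformly tame so that the Sobolev norms $\mathcal{S}$ and the cutoff/partition-of-unity steps in the Parkkonen--Paulin reduction transfer with $k$-independent constants; and (b) the no-escape-of-BMS-mass estimate near the cusps, uniform in $k$, which is the technical heart and is precisely what distinguishes strong convergence to a geometrically finite limit from convergence to an arbitrary limit. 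Once (a) and (b) are established, tracking the constants through the Parkkonen--Paulin argument shows that the leading coefficient $A_k$, the exponent $\delta(\Gamma_k)$, the error rate $\kappa$, and the implied constant can all be taken to converge to (respectively, be uniform near) those of the geometrically finite limit, which is the content of Theorem~\ref{thm:uniformcounting}.
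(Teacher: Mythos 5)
Your overall strategy matches the paper's: reduce to the Parkkonen--Paulin orthogeodesic asymptotics, feed in uniform exponential mixing from the uniform Lax--Phillips gap (via Edwards--Oh), and verify that the critical exponent, Bowen--Margulis total mass, and skinning masses converge along the sequence. The two obstacles you isolate, uniform decay of correlations and no escape of Bowen--Margulis mass into cusps, are indeed what the paper proves (Proposition~\ref{lem:ControlThinPart} and Proposition~\ref{prop:margulismeasure}), and your reduction of case~(2) to counting between small balls is the right idea.

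However there is a genuine gap. The version of the Parkkonen--Paulin counting formula you invoke (\cite[Theorem~3]{ParkkonenPaulin21}) requires that $(\HH^n,\Gamma)$ have \emph{radius-H\"older-continuous} strong stable/unstable ball masses, and this hypothesis does \emph{not} hold for the manifolds in question; it is not simply a matter of tracking constants through their argument. The paper has to replace that hypothesis with the weaker \emph{radius-continuity} property, prove that this property holds uniformly along the strongly convergent sequence (Proposition~\ref{prop:radiuscontinuity}), and then rerun the Parkkonen--Paulin error analysis in the Appendix using only radius-continuity. Your proposal does not acknowledge that a hypothesis of the cited theorem fails, and ``track the constants through the Parkkonen--Paulin argument'' does not substitute for the required reworking. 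Without some uniform quantitative continuity of the measures $\mu_{W^{\pm}(v)}(B^{\pm}(v,r))$ in $r$, valid simultaneously for all $k$, the error term in the counting formula cannot be controlled uniformly, and this is precisely where the exponential-mixing input is actually spent. A secondary, minor slip: the correspondence between orthogeodesics of the $r$-ball with itself and geodesic loops based at $x_k$ is length-\emph{shifting} (by $2r$), not length-preserving, so the counting function must be translated accordingly, as the paper notes when letting the radius shrink.
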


\begin{remark}
We in fact prove the result for  strongly convergent sequences of well-positioned convex sets in a strongly convergent sequence of hyperbolic manifolds (Theorem \ref{mainthm:counting}). We refer readers to Section \ref{sec:convergenceconvex} for the definitions of well-positioned and strong convergence of convex sets in hyperbolic manifolds. 
\end{remark}

The counting of primitive closed geodesics follows from the counting of geodesic loops in manifolds with negatively pinched curvatures \cite[Chapter 5]{Roblin03}. Hence we obtain the following asymptotic counting of primitive closed geodesics along sequences of strongly convergent hyperbolic manifolds.

\begin{corollary}
\label{coro:simple}
Suppose that $(M_{k}=\HH^{n}/ \Ga_{k})_{k\in\mathbb{N}}$ is a sequence of hyperbolic manifolds which strongly converges to a geometrically finite hyperbolic manifold $M=\HH^{n}/ \Ga$ with $\delta(\Ga)>(n-1)/2$. Then we can count the number of primitive closed geodesics with length less than $\ell$ in $M_k$, denoted by $\#\mathcal{G}_{M_k}(\ell)$,  uniformly, in the sense that
\[\#\mathcal{G}_{M_k}(\ell) \approx \frac{e^{\delta(\Gamma_k)\ell}}{\delta(\Gamma_k)\ell}
\]
up to a multiplicative error uniformly close to 1 along the sequence as $\ell$ gets larger and $\lim_k \delta(\Gamma_k) = \delta(\Gamma)$.
\end{corollary}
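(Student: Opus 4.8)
The plan is to deduce Corollary \ref{coro:simple} from Theorem \ref{thm:uniformcounting}(2) by running the proof of Roblin's prime geodesic theorem \cite[Chapter 5]{Roblin03} uniformly along the sequence. For a single geometrically finite $M$ with $\delta(\Gamma)>(n-1)/2$ (so that the Bowen--Margulis--Sullivan measure is finite and the geodesic flow mixes), \cite[Chapter 5]{Roblin03} already contains the passage from counting geodesic loops based at a point to counting primitive closed geodesics, yielding $\#\mathcal{G}_M(\ell)\sim e^{\delta(\Gamma)\ell}/(\delta(\Gamma)\ell)$. So the task is to feed the \emph{uniform} loop count of Theorem \ref{thm:uniformcounting}(2) --- equivalently, Theorem \ref{mainthm:counting} applied to $D^{\pm}_k$ equal to shrinking balls about converging basepoints $x_k\to x$ --- into that passage in place of the single-manifold input, and then to track that every constant it produces depends only on data that converges in $k$.

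In more detail, I would first fix $x\in M$ and a strongly convergent sequence $x_k\to x$ with $x_k\in M_k$; since $x$ is a point of the fixed limit $M$, the injectivity radii $\mathrm{inj}_{M_k}(x_k)$ stay bounded below, so nothing degenerates near the basepoints. A geodesic loop of length $t$ based at $x_k$ is the image of a segment $[\tilde x_k,\gamma\tilde x_k]$ with $\gamma\in\Gamma_k\setminus\{e\}$ and $d(\tilde x_k,\gamma\tilde x_k)=t$, so Theorem \ref{thm:uniformcounting}(2) is exactly a uniform effective orbital count $\#\{\gamma\in\Gamma_k: d(\tilde x_k,\gamma\tilde x_k)\le t\}=A_k e^{\delta(\Gamma_k)t}\bigl(1+O(e^{-\kappa t})\bigr)$ with $A_k\to A$, $\delta(\Gamma_k)\to\delta(\Gamma)$, and $\kappa$ and the implied constant independent of $k$. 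Next I would carry out Roblin's grouping of loops according to the primitive closed geodesic they wrap: a primitive $c$ of length $\ell_c$ contributes loops of lengths $\approx m\ell_c+O(d(x_k,c))$ for $m=1,2,\dots$, and summing this contribution over a fundamental domain together with an Abel summation converts the $e^{\delta(\Gamma_k)t}$ growth of the loop count into the $e^{\delta(\Gamma_k)\ell}/(\delta(\Gamma_k)\ell)$ growth of the primitive count --- the extra factor $1/(\delta(\Gamma_k)\ell)$ being the cost of the geometric-type sum over iterates $m$. The only inputs are the uniform effective count, the convergence $\delta(\Gamma_k)\to\delta(\Gamma)$, and uniform geometric control of $M_k$ near $x_k$ and on its thin part, all of which hold by strong convergence; hence the resulting multiplicative error tends to $1$ as $k\to\infty$, which is the asserted statement.

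I expect the main obstacle to be uniformity: the textbook derivation of a prime geodesic theorem from orbital counting is Tauberian, hence a priori neither effective nor uniform over a sequence. The remedy is to avoid any bare Tauberian step and instead use the exponential error $O(e^{-\kappa t})$ of Theorem \ref{thm:uniformcounting} --- which itself rests on the uniform Lax--Phillips spectral gap of Theorem \ref{thm:uniformgap} and the resulting uniform exponential mixing of the geodesic flow (via \cite{EdwardsOh21}) --- to make the Abel summation quantitative, with all implied constants written in terms of $\delta(\Gamma_k)$, $\kappa$, and the uniform multiplicative constant. A secondary check is that the loop-to-closed-geodesic dictionary itself stays uniform, i.e. that the boundary terms $O(d(x_k,c))$ and the number of primitive closed geodesics meeting a fixed neighborhood of $x_k$ do not blow up along the sequence; this follows from the same well-positioned convex set estimates already used for Theorem \ref{thm:uniformcounting}, using strong convergence of $(x_k)$ and of the thin parts of $(M_k)$.
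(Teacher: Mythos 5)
Your proposal takes essentially the same route as the paper: both deduce the primitive geodesic count by feeding the uniform loop count of Theorem~\ref{thm:uniformcounting}(2) (applied to shrinking balls about converging basepoints) into Roblin's passage from orbit counting to closed-geodesic counting in \cite[Chapter 5]{Roblin03}. The paper's proof is considerably terser --- it simply observes that Roblin's reduction ``only depends on the geometry of the universal cover,'' which, since all the $M_k$ share $\HH^n$ as universal cover, immediately makes that step uniform once the loop count is uniform and $\delta(\Gamma_k)\to\delta(\Gamma)$. You spend more effort worrying about the Tauberian nature of the reduction and about uniformity of the loop-to-conjugacy-class dictionary, which is a reasonable instinct, but in the end you are tracking the same quantities (the uniform orbital count, $\delta(\Gamma_k)\to\delta(\Gamma)$, control of the thin parts) that the paper invokes, so the two proofs amount to the same argument at different levels of spelled-out detail.
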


The proof of Theorem \ref{thm:uniformcounting} involves the uniformity of the exponential mixing and the convergence of certain measures for strongly convergent sequences. These measures refer to the classical Patterson-Sullivan measures, the Bowen-Margulis measure and the skinning measures. The convergence of Patterson-Sullivan measures has been proved for strongly convergent sequences under the assumption that the limit manifold is geometrically finite and its critical exponent is greater than $(n-1)/2$, \cite{McMullen99}. The Bowen-Margulis measure and the skinning measures are defined in terms of the Patterson-Sullivan measures. Answering an question of Oh, we  prove the convergence of these two measures, which could have its own interest. 

\begin{proposition}
\label{prop:margulismeasure}
Suppose that $(M_k=\HH^{n}/ \Ga_k)_{k\in\mathbb{N}}$ is a sequence of hyperbolic manifolds which are strongly convergent to a geometrically finite hyperbolic manifold $M=\HH^{n}/ \Ga$ with $\delta(\Ga)>(n-1)/2$. For $r>0$ we denote by $M_{k}^{<r}\subset M_k, M^{<r}\subset M$ the sets of points with injectivity radius less than r. Then the Bowen-Margulis measures $m_{\rm{BM}}^{k}$ on $T^{1}M_k^{<r}$ converge to the one on $T^{1}M^{<r}$ weakly. Moreover, we have the convergence of total masses. 

\end{proposition}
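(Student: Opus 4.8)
The plan is to build the convergence of Bowen--Margulis measures from the already-available convergence of Patterson--Sullivan measures. Recall that $m_{\rm BM}$ is defined on $T^1M = \Ga\backslash T^1\HH^n$ via the Hopf parametrization: identifying $T^1\HH^n$ with $(\geo\HH^n\times\geo\HH^n\setminus\Delta)\times\R$ by sending a unit tangent vector $v$ to $(v^-,v^+,t)$, where $v^\pm$ are the backward/forward endpoints and $t$ is a signed distance along the geodesic to a fixed basepoint $o$, the Bowen--Margulis measure lifts to
\[
d\tilde m_{\rm BM}(v^-,v^+,t) = \frac{d\mu_o(v^-)\,d\mu_o(v^+)\,dt}{|v^- - v^+|_o^{2\delta}},
\]
where $\mu_o$ is the Patterson--Sullivan density at $o$ and $|\cdot|_o$ is the Gromov product normalization. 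This expression is $\Ga$-invariant, hence descends to $T^1M$. So the first step is to fix a common basepoint $o\in\HH^n$ (legitimate since strong convergence gives us convergent developing maps / an identification of larger and larger balls), write down $\tilde m_{\rm BM}^k$ using $\mu_o^k = \mu_o(\Ga_k)$ and $\delta(\Ga_k)$, and observe that $\mu_o^k \to \mu_o$ weakly (McMullen, \cite{McMullen99}) and $\delta(\Ga_k)\to\delta(\Ga)$ (Canary--Taylor, McMullen). Since the kernel $|v^--v^+|_o^{-2\delta(\Ga_k)}$ converges locally uniformly on $\geo\HH^n\times\geo\HH^n\setminus\Delta$ to $|v^--v^+|_o^{-2\delta(\Ga)}$, and the Patterson--Sullivan measures converge weakly with convergence of total mass on compact sets, one gets weak convergence of $\tilde m_{\rm BM}^k$ to $\tilde m_{\rm BM}$ on compact subsets of $T^1\HH^n$.

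The second step is to pass from the universal cover to the quotients restricted to the thin-radius sets $M_k^{<r}$. Here is where strong convergence does the real work: the $r$-thin part $M_k^{<r}$ is a union of Margulis tubes and cusp neighborhoods, and strong convergence of $(\Ga_k)$ to $\Ga$ means that after conjugating by the approximating isomorphisms, generators converge and hence the thick parts $M_k^{\geq r}$ converge geometrically (bilipschitz on compacta, with controlled behavior near the thin part). One chooses a fundamental-domain-like Borel set $F_k\subset T^1\HH^n$ for the $\Ga_k$-action such that $F_k$ projects bijectively onto $T^1M_k^{<r}$ and $F_k\to F$ in the sense that $\mathbbm 1_{F_k}\to\mathbbm 1_F$ pointwise a.e.\ (this uses that the thin part is a finite union of pieces, each determined by a bounded amount of group data that converges). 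Then for $\varphi\in C_c(T^1M)$, lifting $\varphi$ appropriately and using the local uniform convergence of the kernel together with weak-$*$ convergence of $\mu_o^k$, we obtain $\int \varphi\, dm_{\rm BM}^k \to \int\varphi\,dm_{\rm BM}$; the total-mass statement follows because $m_{\rm BM}(T^1M^{<r})$ is finite (the thin part has finite BM-measure even in the geometrically finite case, as the cusps are rank ... well, bounded-rank and $\delta(\Ga)>(n-1)/2$ guarantees integrability), and one can take $\varphi\nearrow\mathbbm 1_{T^1M^{<r}}$ while controlling the tails uniformly in $k$ using a uniform lower bound on injectivity radius escape rates in the tubes/cusps.

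The main obstacle I expect is precisely this last uniform tail control, i.e.\ ruling out escape of mass. Weak convergence on compacta is cheap; upgrading to convergence of \emph{total} mass on $T^1M^{<r}$ requires that no BM-mass leaks out ``near the boundary'' of the thin part or, in the cusped case, deep into the cusps uniformly along the sequence. For the cusp pieces one needs a uniform integrability estimate of the form $\mu_o^k(\,\text{shadow of a horoball at depth}\geq D\,) \lesssim e^{-(\text{something positive})D}$ with the implied constant uniform in $k$ --- this is where $\delta(\Ga)>(n-1)/2$ (equivalently the spectral gap, and the fact that all cusps have rank, hence Patterson--Sullivan decay exponent, controlled) enters, and where one must invoke the strong convergence to transfer the limit's cusp structure to the approximants (cusps can only ``open up'' from the limit, and a pinched rank-one cusp appearing in the limit is handled since we assume the limit is geometrically finite). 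I would isolate this as a lemma: a uniform Khinchin--Sullivan--type decay estimate for $\mu_o^k$ on horoball shadows, proved by comparing $\Ga_k$ with $\Ga$ on the relevant compact core via strong convergence and then using the standard geometrically-finite estimate for $\Ga$. Once that lemma is in hand, the rest is the routine weak-convergence argument sketched above.
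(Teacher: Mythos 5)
Your high-level strategy matches the paper's: reduce to (i) weak convergence on the compact part, which follows from McMullen's convergence of Patterson--Sullivan densities together with convergence of critical exponents, and (ii) a uniform ``no escape of mass'' estimate near the thin part, which you correctly single out as the crux and tie to the hypothesis $\delta(\Ga)>(n-1)/2$. The paper implements (i) by a partition of unity on $C(M)\cap M^{\epsilon,r}$ rather than by your fundamental-domain sets $F_k$, and additionally verifies that $\partial M^{<r}$ has $m_{\rm BM}$-measure zero so that restricting to $M^{<r}$ behaves continuously; these are cosmetic differences.

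Where your sketch falls short is in the mechanism behind the tail-control lemma. You propose to get a uniform Khinchin--Sullivan-type decay for $\mu_o^k$ on horoball shadows ``by comparing $\Ga_k$ with $\Ga$ on the relevant compact core via strong convergence and then using the standard geometrically-finite estimate for $\Ga$.'' This does not suffice as stated: the whole difficulty is that the cusp (or near-pinched tube) is \emph{not} compact, and geometric/strong convergence only controls $\Ga_k$ relative to $\Ga$ on compacta, so it cannot by itself produce a decay rate that is uniform in $k$ deep inside the cusps. Moreover, for a tube in $M_k$ degenerating to a cusp in $M$, the relevant elementary subgroup $\mathcal{P}_k$ is loxodromic (not parabolic) and only converges geometrically to the parabolic $\mathcal{P}<\Ga$, so the ``standard geometrically-finite estimate for $\Ga$'' does not transfer directly. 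What actually makes the argument work in the paper (Proposition \ref{lem:ControlThinPart}) is McMullen's theorem on uniform convergence of the \emph{elementary} Poincar\'e series $\sum_{p\in\mathcal{P}_k}e^{-\delta(\Ga_k)d(x,px)}$ as $\mathcal{P}_k\to\mathcal{P}$, combined with a Dalbo--Otal--Peign\'e-style bound expressing $m^k_{\rm BM}$ of a thin component in terms of exactly that series. That uniform Poincar\'e-series statement is a genuine theorem (Theorem \ref{thm:cusptail}), not a consequence of comparing on compact cores, and it is what turns your flagged obstacle into an actual estimate. So: right structure, correct identification of the hard step and of the role of $\delta(\Ga)>(n-1)/2$, but the proposed proof of the key lemma would not go through as written; you need to route it through the uniform convergence of parabolic/elementary Poincar\'e series.
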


\begin{remark}
The convergence of the Bowen-Margulis measures on $T^{1}M_{k}^{<r}$ might be helpful for proving that  the Benjamini-Schramm limit of  $(M_k)_{k\in \bN}$ is also $M$ (see for instance \cite[Section 3.9]{7s} for a general definition of Benjamini-Schramm convergence).

\end{remark}

We now discuss the convergence of skinning measures $\sigma^{\pm}$ for the special type of \emph{well-positioned} convex sets in hyperbolic manifolds. Geodesic balls with sufficiently small radii and the thin part in a hyperbolic manifolds are well-positioned. We refer readers to Section \ref{sec:convergenceconvex} for the definition and detailed discussions. 

\begin{corollary}
\label{coro:convergenceskinning}
Suppose that  $(M_k=\HH^{n}/\Ga_k)_{k\in\mathbb{N}}$ is a sequence of hyperbolic manifolds that strongly converges to a geometrically finite hyperbolic manifold $M=\HH^{n}/ \Ga$ with $\delta(M)>(n-1)/2$.   Let $D_k\subset M_k, D\subset M$ be well-positioned convex sets, so that $(D_k)_{k\in \bN}$ strongly converges to $D$. Then
\[\Vert\sigma^{\pm}_{\partial D_k}\Vert \rightarrow \Vert\sigma^{\pm}_{\partial D}\Vert.\]
The relative result also holds for subsets $\Omega_k\subseteq D_k,\, \Omega \subseteq D$ so that $(\Omega_k)_{k\in \bN}$ strongly converges to $\Omega$. 
\end{corollary}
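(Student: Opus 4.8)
The plan is to deduce the convergence of skinning-measure masses from the already-established convergence of Patterson–Sullivan measures (\cite{McMullen99}) together with the geometric control provided by the hypothesis that the $D_k$ are well-positioned and strongly convergent. Recall that the skinning measure $\sigma^{\pm}_{\partial D}$ is obtained by pushing the Patterson–Sullivan density forward onto the outer/inner unit normal bundle $\partial^{1}_{\pm}D$ of $\partial D$: concretely, for $v \in \partial^{1}_{\pm}D$ one writes $d\sigma^{\pm}_{\partial D}(v) = e^{-\delta\, \beta_{v^{\pm}}(\pi(v),\,\xi_{0})}\, d\mu_{o}(v^{\pm})$ in Hopf-type coordinates based at the endpoint $v^{\pm} \in \geo\HH^{n}$, integrated against the Patterson–Sullivan measure $\mu$ restricted to the ``shadow'' $\mathcal{O}_{\partial D}$ of $\partial D$ at infinity. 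So the strategy is: (i) lift everything to $\HH^{n}$ and fix a convex lift $\widetilde{D}_k \to \widetilde{D}$ converging in the appropriate (Hausdorff-on-compacta) sense coming from strong convergence of convex sets; (ii) identify $\Vert\sigma^{\pm}_{\partial D_k}\Vert$ with an integral over $\geo\HH^{n}$ of a density built from $\mu_k$ and the Busemann cocycle of $\widetilde{D}_k$; (iii) pass to the limit using weak-* convergence of $\mu_k \to \mu$ (plus convergence of critical exponents $\delta_k \to \delta$) and uniform continuity of the integrand.

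The key steps, in order, are as follows. First, I would set up the ``fundamental domain'' description of the skinning mass: because $D_k$ is $\Gamma_k$-precisely invariant, $\Vert\sigma^{\pm}_{\partial D_k}\Vert$ equals the $\sigma^{\pm}_{\partial\widetilde{D}_k}$-measure of a fundamental domain for $\stab_{\Gamma_k}(\widetilde{D}_k)$ acting on $\partial^{1}_{\pm}\widetilde{D}_k$, which in turn is a $\mu_k$-integral over a subset of $\geo\HH^{n}$ with a Radon–Nikodym weight given by $e^{-\delta_k\,\beta(\cdot)}$ depending only on $\widetilde{D}_k$. Second, I would invoke that well-positioned-ness gives uniform geometric bounds — the normal exponential map off $\partial\widetilde{D}_k$ is uniformly bi-Lipschitz, the shadow $\mathcal{O}_{\partial\widetilde{D}_k}$ varies continuously, and (crucially) the boundary $\partial\widetilde{D}_k$ stays in the convex core / thick region in a controlled way, so no mass escapes to infinity along the sequence. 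Third, strong convergence of $(D_k)$ gives $\widetilde{D}_k \to \widetilde{D}$ on compact sets, hence $\beta_{\widetilde{D}_k} \to \beta_{\widetilde{D}}$ locally uniformly and the indicator of the shadow converges $\mu$-a.e.; combined with McMullen's weak convergence $\mu_k \to \mu$ and a uniform-integrability/tightness argument (to handle that the $\mu_k$ are supported on moving limit sets $\Lambda_k$), this yields $\Vert\sigma^{\pm}_{\partial D_k}\Vert \to \Vert\sigma^{\pm}_{\partial\widetilde D}\Vert = \Vert\sigma^{\pm}_{\partial D}\Vert$. The relative statement for $\Omega_k \subseteq D_k$ follows verbatim by restricting the base-point integral to the (converging) shadow of $\Omega_k$ and applying the same weak-convergence/tightness argument to the restricted integrand.

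The main obstacle I anticipate is controlling escape of mass, i.e. establishing the tightness needed to upgrade weak convergence $\mu_k \to \mu$ and pointwise convergence of the weights into convergence of the \emph{total} integrals. Two things can go wrong: mass of $\mu_k$ could concentrate near cusps that are being pinched (rank-one cusps opening up in the strong limit), and the exponential weight $e^{-\delta_k \beta_{\widetilde D_k}}$ is unbounded, so convergence of integrals is not automatic from boundedness of total mass. The hypothesis $\delta(\Gamma) > (n-1)/2$ is exactly what rules out the first pathology — it forces the Patterson–Sullivan measure to give negligible mass to deep cusp regions uniformly in $k$ (this is the content behind McMullen's convergence result, and behind Proposition \ref{prop:margulismeasure}) — while well-positioned-ness of $D_k$ keeps $\partial\widetilde D_k$ away from the parts of $\HH^{n}$ where the weight blows up, giving a uniform bound on the integrand on the relevant shadow. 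I would therefore spend the bulk of the argument proving a quantitative ``no escape of mass'' lemma: for every $\varepsilon > 0$ there is a compact $K \subset \HH^{n}$, independent of $k$, such that the part of the skinning integral coming from $\geo\HH^n$-directions pointing outside $K$ is $< \varepsilon$ for all large $k$ — after which dominated convergence on $K$ finishes the proof. This lemma should also immediately give the relative version, since shrinking from $D_k$ to $\Omega_k$ only shrinks the domain of integration.
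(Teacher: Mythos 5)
Your overall strategy is the right one: lift to $\HH^n$, write the skinning mass as a $\mu_k$-integral over a fundamental domain (or shadow) with a Busemann weight, and pass to the limit using McMullen's weak convergence $\mu_k\to\mu$ together with $\delta_k\to\delta$. The paper's proof does exactly this via the closest point maps $P_{D_k}$, choosing compact fundamental domains $E_k\subset\partial\widetilde{D}_k$ for the support of $\sigma^\pm_{\partial D_k}$ and a single set $F\subset\mathbb{S}^{n-1}$ whose images $P_{D_k}(F)$ cover the $E_k$, then squeezing.

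Where you overshoot is the anticipated "no escape of mass" lemma, which you flag as the bulk of the work. In this paper that step is not something to prove but something that has already been built into the hypotheses. First, the definition of \emph{well-positioned} requires $\sigma^\pm_{\partial D}$ to have compact support (so there is a compact fundamental domain $E$ from the start). Second — and this is the point you appear to have missed — the definition of \emph{strong convergence of well-positioned convex sets} has two conditions, and condition (2) states precisely that $\bar\pi(\mathrm{supp}(\sigma^\pm_{\partial D_k}))$ is eventually contained in $\varphi_k(N_1(\bar\pi(\mathrm{supp}(\sigma^\pm_{\partial D}))))$. That is exactly the uniform-compactness/tightness statement you propose to establish by hand via $\delta(\Gamma)>(n-1)/2$ and cusp estimates. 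Once you notice this, the argument collapses to: pick $F$ so that $P_{D_k}(F)\supseteq E_k$ and $P_D(F)\supseteq E$, observe $\tilde\sigma^\pm_{\widetilde{D}_k}(P_{D_k}(F))\to\tilde\sigma^\pm_{\widetilde D}(P_D(F))$ by weak convergence of $\mu_k$ on the fixed compact set $F$ and uniform convergence of the Busemann weight there, and then shrink $F$ to make both $\tilde\sigma^\pm_{\widetilde{D}_k}(P_{D_k}(F)\setminus E_k)$ and $\tilde\sigma^\pm_{\widetilde D}(P_D(F)\setminus E)$ arbitrarily small. The relative version is verbatim with $E_k,E$ replaced by fundamental domains for $\Omega_k,\Omega$. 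So your proposal would ultimately work, but you would be re-proving a tightness statement that the definition of strong convergence of convex sets already hands you; the hypothesis $\delta(\Gamma)>(n-1)/2$ is used here only insofar as it is needed for McMullen's $\mu_k\to\mu$, not to control escape of skinning mass.
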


\medskip 
{\bf Organization of the paper.} We review  definitions of  geometric finiteness, the Bowen-Margulis measure, and skinning measures in Section \ref{subsec:geofinite}, \ref{subsec:PSmeasure}, \ref{subsec:BMmeasure}, respectively. Section \ref{subsec:spectrum} is about the relation between the critical exponent and the bottom spectrum. Section \ref{sec:convergenceconvex} defines strong convergence of hyperbolic manifolds and the convergence of well-positioned convex sets.  Section \ref{sec:eigenvalues} discusses small eigenvalues of the negative  Laplacian  on negatively pinched Hadamard manifolds and gives a proof of Theorem \ref{thm:uniformgap}. In Section \ref{sec:measure}, we prove  the convergence results  of the Bowen-Margulis measure and the skinning measures, i.e. Proposition \ref{prop:margulismeasure} and Corollary \ref{coro:convergenceskinning}. The last section, Section \ref{uniformcounting}, proves the uniform  asymptotic counting results of geodesic loops and orthogeodesics along strongly convergent sequences, i.e., the proof of  Theorem \ref{thm:uniformcounting}.

\section*{Acknowledgements} We would like to thank Martin Bridgeman for pointing out the uniform counting question to us, and Hee Oh for suggesting Propostion \ref{prop:margulismeasure} and useful discussions.  We are  very grateful to Curtis T. McMullen, Fr\'ed\'eric Paulin for helpful comments on an earlier draft, and Ian Biringer for email correspondence.  We  also appreciate the anonymous referees for the helpful suggestions.  The first author is partially supported by the NSF grant DMS-2203237. The second author is supported by NSF grant DMS-2001997.

\section{Background}\label{sec:background}
\subsection{Geometric finiteness}\label{subsec:geofinite}
In this subsection, we let $X$ denote an $n$-dimensional negatively pinched Hadamard manifold whose sectional curvatures lie between $-\kappa^{2}$ and $-1$ for some $\kappa\geq 1$. For any isometry $\ga\in \Isom(X)$, we define its \emph{translation length} $\tau(\ga)$ as follows:
$$\tau(\ga):=\inf_{p\in X} d_{X}(p, \ga(p)),$$
where $d_{X}$ is the Riemannian distance function in $X$. 
Based on the translation length, we can classify isometries in $X$ into 3 types;  we call $\ga$  \emph{loxodromic} if  $\tau(\ga)>0$. In this case, the infimum is attained exactly when the points are on the axis of $\ga$. The isometry $\ga$ is called \emph{parabolic} if $\tau(\ga)=0$ and the infimum is not attained. The isometry $\ga$ is \emph{elliptic} if $\tau(\ga)=0$ and the infimum is attained. 

From now on, we consider torsion-free discrete isometry subgroups $\Ga<\Isom(X)$, i.e. $\Ga$ contains no elliptic elements. If $\Gamma<\Isom(\HH^{n})$ is a torsion-free discrete isometry subgroup, we call it a \emph{Kleinian group}. 
Given $0<\epsilon<\epsilon(n, \kappa)$, where $\epsilon(n, \kappa)$ is the Margulis constant depending on the dimension $n$ and the constant $\kappa$, let $\mathcal{T}_{\epsilon}(\Ga)$ be the set consisting of all points $p\in X$ such that there exists an isometry $\ga\in \Ga$ with 
$$d(p, \ga p)\leq \epsilon. $$
It is an $\Ga$-invariant set, and the quotient $\mathcal{T}_{\epsilon}(\Ga)/ \Ga$ is the thin part of the quotient manifold $M=X/ \Ga$, denoted by $M^{<\epsilon}$.

  A subgroup $P<\Ga$ is called \emph{parabolic} if the fixed point set of $P$ consists of a single point $\xi\in \geo X$, where $\geo X$ is the visual boundary of $X$. Note that $\mathcal{T}_{\epsilon}(P)\subset X$ is precisely invariant under $P$, i.e.  $\stab_{\Ga}(\mathcal{T}_{\epsilon}(P))=P$ \cite[Corollary 3.5.6]{Bowditch95}. By abuse of notation, we can regard $\mathcal{T}_{\epsilon}(P)$ as a subset of $M=X/ \Ga$, which is called a \emph{Margulis cusp}. The union of all Margulis cusps consists of the \emph{cuspidal} part of $M$, denoted by $\cusp_{\epsilon}(M)$.

The \emph{limit set} $\Lambda(\Ga)$ of  a discrete, torsion-free isometry subgroup $\Ga<\Isom(X)$ is defined to be the set of accumulation points of a $\Ga$-orbit $\Ga(p)$ in $\geo X$ for any point $p\in  X$. We call $\Ga$ \emph{elementary} if $\La(\Ga)$ is finite; Otherwise, we say $\Ga$ is \emph{nonelementary}.  For any two points $\xi$ and $\eta$ in $\geo X$, we use $\xi\eta$ to denote the unique geodesic in $X$ connecting these two points. The \emph{convex hull} of $\La(\Ga)\subset \geo X$ is the smallest closed convex subset in $X$ whose accumulation set is $\La(\Ga)$, denoted by $\Hull(\Ga)$. We let $C(M)=\Hull(\Ga)/ \Ga$ denote the \emph{convex core} of quotient manifold $M=X/ \Ga$. For any constant $\epsilon>0$, we define the \emph{truncated core} by 
$$C(M)^{>\epsilon}=C(M)-M^{<\epsilon}. $$

Given a constant $0<\epsilon<\epsilon(n, \kappa)$, a discrete isometry subgroup $\Ga$ is \emph{geometrically finite} if the truncated core $C(M)^{>\epsilon}$ is compact in $M=X/ \Ga$. If, in addition, $C(M)$ is compact, i.e. $\Ga$ contains no parabolic isometries, then $\Ga$ is called \emph{convex co-compact}. Furthermore, if $\Ga<\Isom(X)$ is geometrically finite, the parabolic fixed points in $\Lambda(\Ga)$ are bounded, defined as follows:
\begin{definition}\cite{Bowditch93}
A parabolic fixed point $\xi\in \La(\Ga)$ is \emph{bounded} if $(\La(\Ga)\setminus \{p\})/ \stab_{\Ga}(p)$ is compact. 
\end{definition}

Given a point $x\in X$ and a discrete isometry group $\Ga\in \Isom(X)$, the Poincar\'e series is defined as 
$$P_{s}(\Gamma, x)=\sum_{\ga\in \Ga} e^{-sd_{X}(x, \gamma x)}. $$
The \emph{critical exponent} of $\Ga$ is defined as 
$$\delta(\Ga):=\inf\{s \mid P_{s}(\Gamma, x)<\infty\}. $$
It is not hard to see that the definition of $\delta(\Ga)$ is independent of the choice of $x$.

\subsection{Eigenvalues and spectrum}\label{subsec:spectrum}
As in Section \ref{subsec:geofinite}, we let $M=X/ \Gamma$, where $X$ is a negatively pinched Hadamard manifold, and $\Gamma$ is a torsion-free discrete isometry subgroup. 
Define the Sobolev space $H^1(M)$ as the space obtained by the completion of $C^\infty_0(M)$ with respect to the norm $\Vert f\Vert=\sqrt{\int_M|f|^2 + \int_M|\nabla f|^2}$. This space can be also defined as functions in $L^2(M)$ whose weak derivative (in the sense of distributions) is also in $L^2(M)$.

Given $f\in H^1(M)$, we define the Rayleigh quotient $R(f)$ of $f$ by
\[R(f) = \frac{\int_M|\nabla f|^2}{\int_M|f|^2}.
\]

The Rayleigh quotient is closely related to the spectrum $Spec(M)$ of the negative  Laplace operator. Namely, by posing the following minimization problem

\[\lambda=\inf \bigg\lbrace R(f)\,\bigg|\, f\in H^1(M) \bigg\rbrace
\]
we obtain a $L^2$ integrable smooth function $f$ satisfying $-\Delta f = \lambda f$.

We let $\lambda_{0}(M)$ denote the bottom of the spectrum, and we say that $\lambda\in Spec(M)$ is a small eigenvalue of $M$ if $\lambda<(n-1)^2/4$. Moreover, given a constant $\mu<(n-1)^2/4$, we define $Spec_\mu(M)$ as the collection (counting multiplicities) of  eigenvalues of the negative Laplacian on $M$ less than or equal to $\mu$. The set of small eigenvalues is a finite set (see \cite{Ursula04}).

In the rest of the subsection, we list several properties of the bottom of the spectrum $\lambda_{0}(M)$. We will use these properties in Section \ref{sec:eigenvalues} to prove the uniform spectral gap for strongly convergent sequences of geometrically finite groups $(\Ga_{k}<\Isom(X))_{k\in \bN}$.

\begin{lemma}\cite{Ursula04}\label{lemma:loxodromic}
Let $\Gamma<\Isom(X)$ be a torsion-free discrete elementary isometry subgroup of a negatively pinched Hadamard manifold $X$ with dimension $n$.  Then  $\lambda_0(X/\Gamma)\geq (n-1)^2/4$.
\end{lemma}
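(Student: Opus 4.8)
The plan is to exhibit an explicit test function whose Rayleigh quotient is bounded below by $(n-1)^2/4$, and then to argue that this bound is optimal by noting that for an elementary group one cannot do better. Concretely, let me split into the two cases according to the type of the generator(s) of the elementary group $\Gamma$.

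First I would treat the case where $\Gamma$ is virtually cyclic generated (up to finite index, but here torsion-free so actually cyclic or a parabolic subgroup) by a loxodromic isometry $\gamma$ with axis $\alpha$ and translation length $\tau(\gamma)=\ell>0$. The quotient $X/\Gamma$ retracts onto the closed geodesic $\alpha/\langle\gamma\rangle$ of length $\ell$; using Fermi coordinates $(t,\rho,\theta)$ around the geodesic, where $\rho$ is the distance to the axis, the volume element grows at least like $\cosh(\rho)^{n-1}\,dt\,d\rho\,d\theta$ because the sectional curvatures are $\le -1$ (Rauch comparison against $\HH^n$). For a radial test function $f=f(\rho)$ supported in a large tube, one computes $\int_M|\nabla f|^2 / \int_M |f|^2 = \int f'(\rho)^2 \cosh(\rho)^{n-1}d\rho \big/ \int f(\rho)^2 \cosh(\rho)^{n-1} d\rho$, and a standard one-dimensional Hardy-type inequality shows this is $\ge (n-1)^2/4$; taking $f(\rho)=e^{-(n-1)\rho/2}$ truncated shows the bound is approached. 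Hence $\lambda_0 \ge (n-1)^2/4$ in this case.

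Next, the parabolic case: $\Gamma=P$ fixes a single point $\xi\in\geo X$, and $X/\Gamma$ is exhausted by (quotients of) horoballs. In horospherical coordinates based at $\xi$, writing $s$ for the Busemann coordinate so the horospheres are level sets $\{s=\text{const}\}$, the metric is comparable to $ds^2 + e^{2s}(\text{metric on the horosphere contracted})$ — more precisely, because curvatures lie in $[-\kappa^2,-1]$, the horospheres contract at rate between $e^{-\kappa s}$ and $e^{-s}$, so the volume element decays at least like $e^{-(n-1)s}$ as $s\to+\infty$ into the cusp. Again testing with $f=f(s)$ reduces the Rayleigh quotient to a one-dimensional quotient with weight $e^{-(n-1)s}$, and the same Hardy inequality gives $\lambda_0\ge(n-1)^2/4$. (Alternatively one can cite that a horoball $H\subset X$ already has $\lambda_0(H)\ge(n-1)^2/4$ by the comparison with $\HH^n$, and $X/P$ is covered by $H$, so $\lambda_0(X/P)\ge\lambda_0(H)$ by domain monotonicity — this is cleaner and I would use it.)

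The main obstacle I anticipate is the curvature comparison in the parabolic case: unlike the constant-curvature setting, the horosphere geometry in a pinched manifold is only controlled up to the bounds $[-\kappa^2,-1]$, so one must be careful that the relevant inequality uses the upper curvature bound $-1$ (giving the slower decay $e^{-(n-1)s}$, which is what makes the Rayleigh quotient \emph{large}) rather than the lower bound. The loxodromic case is more routine once Fermi coordinates and the Rauch comparison are set up. In both cases the essential analytic input is the elementary fact that $\int_0^\infty u'(r)^2 w(r)\,dr \ge \tfrac{(n-1)^2}{4}\int_0^\infty u(r)^2 w(r)\,dr$ for weights $w$ growing (resp. decaying) at least as fast as $e^{\pm(n-1)r}$, which follows by completing the square after an integration by parts; since the lemma is attributed to \cite{Ursula04}, I would expect the actual proof to either reproduce this or simply cite it, and I would cite it while recording the test-function heuristic above for the reader's intuition.
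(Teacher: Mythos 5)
This lemma is cited from \cite{Ursula04} and the paper supplies no proof, so there is nothing internal to compare against; the assessment below is of your sketch on its own merits.

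Your overall strategy --- reduce to a one-dimensional Hardy-type inequality using the exponential volume growth forced by $K\le -1$ --- is in the right family, but as written there are several gaps, one of which is structural.

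\textbf{Radial functions are not enough.} To prove $\lambda_0\ge (n-1)^2/4$ you must bound $R(f)$ below for \emph{every} $f\in C^\infty_0$, not only for the radial ones. Your sketch computes the Rayleigh quotient of $f=f(\rho)$ and then concludes ``hence $\lambda_0\ge(n-1)^2/4$,'' but the infimum defining $\lambda_0$ ranges over all test functions. The fix is standard (write $|\nabla f|^2\ge |\partial_\rho f|^2$, apply the $1$-D inequality fiberwise in the $\rho$-variable, then integrate over the remaining coordinates), but it needs to be said; as stated the logic does not close.

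\textbf{Incorrect volume density, and a real restriction on dimension.} In Fermi coordinates about a geodesic in $\HH^n$ the density is $\cosh\rho\,\sinh^{n-2}\rho$, not $\cosh^{n-1}\rho$. With the correct weight $w$, the key pointwise inequality driving the Hardy/IBP estimate is $(\log w)'\ge n-1$. For $w=\cosh\rho\,\sinh^{n-2}\rho$ this reads $\tanh\rho+(n-2)\coth\rho\ge n-1$, which holds for $n\ge 3$ (by $\tanh\rho+\coth\rho\ge2$ and $\coth\rho\ge 1$) but \emph{fails for $n=2$}, where $(\log w)'=\tanh\rho<1$. So as written, the loxodromic case of your argument silently breaks in dimension two even though the lemma is still true there.

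\textbf{The parabolic ``alternative'' is backwards.} Domain monotonicity goes the wrong way: $H/P\subset X/P$ gives $\lambda_0(H/P)\ge\lambda_0(X/P)$, not the reverse. Knowing that a horoball (or its quotient) has large $\lambda_0$ does not lower-bound $\lambda_0$ of the larger manifold $X/P$. Your first (horospherical coordinate) approach to the parabolic case is the one to keep; the horoball shortcut does not work as stated.

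\textbf{A cleaner, uniform route.} A way to avoid all of the above at once, and what one would expect the cited source to do: let $b$ be the Busemann function based at a point $\xi\in\geo X$ fixed by $\Gamma$ (a parabolic fixed point, or an endpoint of the loxodromic axis). By Laplacian comparison with $K\le -1$, $\Delta b\ge n-1$ and $|\nabla b|=1$ on all of $X$. Although $b$ itself need only be $\Gamma$-invariant up to an additive constant in the loxodromic case, the vector field $\nabla b$ is genuinely $\Gamma$-equivariant, hence descends to a unit vector field $V$ on $X/\Gamma$ with $\operatorname{div}V\ge n-1$. Then for any $f\in C^\infty_0(X/\Gamma)$,
\[
(n-1)\int f^2 \le \int f^2\,\operatorname{div}V = -2\int f\,\langle\nabla f,V\rangle \le 2\Big(\int f^2\Big)^{1/2}\Big(\int|\nabla f|^2\Big)^{1/2},
\]
which gives $R(f)\ge(n-1)^2/4$. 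This handles both loxodromic and parabolic cases, all $n\ge 2$, all test functions, and sidesteps coordinate computations entirely.
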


\begin{lemma}\cite[Lemma 2.3]{Ursula04}\label{lemma:expends}
Suppose that $\Gamma<\Isom(X)$ is a geometrically finite discrete isometry subgroup of a negatively pinched Hadamard manifold $X$ with dimension $n$. Then for every $r>0$ we have that $\mu_1(M\setminus B_r(C(M)))\geq (\tanh r)^2(n-1)^2/4$, where $M=X/ \Ga$ and  $\mu_1(M\setminus B_r(C(M)))$ denotes the smallest Rayleigh quotient for all smooth functions $f$ with compact support in $M\setminus B_r(C(M))$. 
\end{lemma}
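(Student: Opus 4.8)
The plan is to treat the distance function to the convex core as a substitute for the Busemann function in the classical computation that $\lambda_0(\HH^n)=(n-1)^2/4$. Write $\rho\colon M\setminus C(M)\to(0,\infty)$, $\rho(x)=d_M(x,C(M))$; it lifts to the $\Gamma$-invariant function $x\mapsto d_X(x,\Hull(\Gamma))$ on $X\setminus\Hull(\Gamma)$. Since $\Hull(\Gamma)$ is a closed convex subset of the Hadamard manifold $X$, nearest-point projection onto it is well defined, so $\rho$ is $C^{1,1}$ on $M\setminus C(M)$ with $|\nabla\rho|\equiv 1$ there.

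The geometric input I would establish first is the Laplacian comparison
\[\Delta\rho\ \ge\ (n-1)\tanh(\rho)\qquad\text{on }M\setminus C(M),\]
valid in the barrier sense (equivalently, almost everywhere, since $\rho\in W^{2,\infty}_{\mathrm{loc}}$). This comes from a Riccati/Hessian comparison: along a unit-speed geodesic $\gamma$ realizing the distance to $\partial C(M)$ one has $\operatorname{Hess}\rho(\gamma',\cdot)=0$, while the restriction $A(t)$ of $\operatorname{Hess}\rho$ to $\gamma'(t)^\perp$ satisfies $A'+A^2+\mathcal R_t=0$ with $\mathcal R_t(v)=R(v,\gamma')\gamma'$, initial value $A(0)$ equal to the (nonnegative, by convexity of $C(M)$) second fundamental form of $\partial C(M)$ at $\gamma(0)$. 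Comparing with the constant-curvature $-1$ model started from a totally geodesic hypersurface, where $A(t)=\tanh(t)\,\mathrm{id}$, and using $\sec_X\le -1$ (so $\mathcal R_t\le -\mathrm{id}$), the Riccati comparison principle gives $A(t)\ge\tanh(t)\,\mathrm{id}$; tracing over an orthonormal frame yields the displayed bound. When $\partial C(M)$ is not $C^2$ — the generic case for a convex core — this is phrased with lower barriers, and this regularity bookkeeping is exactly the content of the cited reference.

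With this in hand, fix $r>0$ and let $f\in C_0^\infty(M\setminus B_r(C(M)))$, so $\rho\ge r$ on $\operatorname{supp}f$ and hence $\tanh\rho\ge\tanh r$ there. Integrating by parts (legitimate since $\rho\in C^{1,1}$ with $\operatorname{Hess}\rho\in L^\infty_{\mathrm{loc}}$ and $f^2$ is Lipschitz with compact support away from $C(M)$; or smooth $\rho$ first), and using $|\nabla\rho|=1$ together with Cauchy--Schwarz,
\[(n-1)\tanh(r)\int_M f^2\ \le\ \int_M(\Delta\rho)\,f^2\ =\ -2\int_M f\,\langle\nabla\rho,\nabla f\rangle\ \le\ 2\,\|f\|_{L^2}\,\|\nabla f\|_{L^2}.\]
Rearranging gives $R(f)=\|\nabla f\|_{L^2}^2/\|f\|_{L^2}^2\ \ge\ (n-1)^2(\tanh r)^2/4$, and taking the infimum over all such $f$ yields $\mu_1(M\setminus B_r(C(M)))\ge(\tanh r)^2(n-1)^2/4$.

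The only genuinely delicate point — the part that deserves care — is the low regularity of $\rho$ near $\partial C(M)$ and the consequent need to interpret the Laplacian comparison in the barrier/almost-everywhere sense and to justify the integration by parts at that regularity; this is handled by standard comparison geometry for distance functions from convex sets in Hadamard manifolds, using that $C^{1,1}$ functions have bounded, almost-everywhere-defined Hessians so that the divergence theorem applies on the compact set $\operatorname{supp}f$. I note that geometric finiteness of $\Gamma$ plays no essential role here beyond guaranteeing that $C(M)$ is a well-defined nonempty convex set; the estimate is entirely local on $M\setminus B_r(C(M))$.
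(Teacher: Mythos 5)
The paper does not give a proof of this lemma; it is cited from Hamenst\"adt \cite{Ursula04}, so there is no in-paper argument to compare against. Your reconstruction is correct and is the standard McKean-type estimate: the Riccati comparison with initial condition $A(0)\ge 0$ (convexity of $C(M)$) and curvature bound $\sec\le -1$ does give $\Delta\rho\ge(n-1)\tanh\rho$ in the barrier (hence a.e., since $\rho\in C^{1,1}_{\mathrm{loc}}$ off $C(M)$) sense, and the integration by parts with $f^2$ followed by Cauchy--Schwarz yields exactly the claimed Rayleigh quotient bound. Two small remarks: first, when $\Hull(\Gamma)$ has codimension $\ge 2$ (e.g.\ a single axis for an elementary loxodromic group) the model comparison gives some $\coth$-terms rather than $\tanh$, but since $\coth\ge\tanh$ the inequality $\Delta\rho\ge(n-1)\tanh\rho$ only improves, so your estimate still holds; second, you correctly observe that geometric finiteness is inessential here --- what matters is that $C(M)$ is a closed convex set, and indeed the paper invokes the companion Lemma \ref{lemma:loxodromic} for elementary pieces precisely because the convex-core estimate degenerates there.
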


If $X=\HH^{n}$, we have the following result relating $\lambda_{0}(M)$ to the critical exponent $\delta(\Ga)$. 

\begin{theorem}\cite{Sullivan3}
\label{Sullivan3}
For any nonelementary complete hyperbolic manifold $M=\HH^{n}/ \Ga$, one has

\begin{equation*}
\lambda_{0}(M)= \begin{cases}
(n-1)^{2}/4 &\text{if $\delta(\Gamma)\leq (n-1)/2$,}\\
\delta(\Gamma)(n-1-\delta(\Gamma)) &\text{if $\delta(\Gamma)\geq (n-1)/2$.}
\end{cases}
\end{equation*}

\end{theorem}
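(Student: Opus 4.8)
The plan is to prove the two cases by sandwiching $\lambda_0(M)$ between matching bounds; write $\delta=\delta(\Ga)$. Since $t\mapsto t(n-1-t)$ increases on $[0,(n-1)/2]$ to the maximum $(n-1)^2/4$ and then decreases, it suffices to show: (i) $\lambda_0(M)\le (n-1)^2/4$ always; (ii) $\lambda_0(M)\ge(n-1)^2/4$ when $\delta\le(n-1)/2$; (iii) $\lambda_0(M)\ge\delta(n-1-\delta)$ and (iv) $\lambda_0(M)\le\delta(n-1-\delta)$ when $\delta\ge(n-1)/2$ (the two regimes agree at $\delta=(n-1)/2$, where both answers equal $(n-1)^2/4$). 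I will use two standard analytic facts. First, the \emph{ground state transform}: if $u\in C^2(M)$ is positive with $\Delta u+\lambda u\le 0$ on $M$, then $R(f)\ge\lambda$ for every $f\in C_0^\infty(M)$ (expand $\int|\nabla(uv)|^2$ with $v=f/u$ and integrate by parts), hence $\lambda_0(M)\ge\lambda$; the same holds for a positive distributional supersolution after mollification. Second, the link between $\lambda_0$ and \emph{Green's functions}: if $\mathrm{vol}(M)=\infty$ then for every $\lambda<\lambda_0(M)$ the operator $-\Delta-\lambda$ has a minimal positive Green's function on $M$, equal to the image sum $G^\lambda_M(\tilde x,\tilde y)=\sum_{\ga\in\Ga}G^\lambda_{\HH^n}(\tilde x,\ga\tilde y)$ (use $(-\Delta-\lambda)^{-1}=\int_0^\infty e^{\lambda t}e^{t\Delta}\,dt$ and $p^M_t=\sum_\ga p^{\HH^n}_t(\cdot,\ga\cdot)$); consequently, if that sum diverges then $\lambda\ge\lambda_0(M)$. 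If instead $\mathrm{vol}(M)<\infty$ then $\Ga$ is a lattice, $\delta=n-1$, and $\lambda_0(M)=0=\delta(n-1-\delta)$, so we may assume $\mathrm{vol}(M)=\infty$.

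For (i): $M$ has $\mathrm{Ric}\equiv-(n-1)g$, so by Bishop comparison $\mathrm{vol}\,B_r(p)\le\mathrm{vol}_{\HH^n}B_r\asymp e^{(n-1)r}$, and Brooks' volume-growth estimate gives $\lambda_0(M)\le\frac14(n-1)^2$; equivalently, from $p^M_t(x,x)=\sum_\ga p^{\HH^n}_t(\tilde x,\ga\tilde x)\ge p^{\HH^n}_t(\tilde x,\tilde x)$ and $\lambda_0(N)=-\lim_{t\to\infty}\tfrac1t\log p^N_t(x,x)$ one gets $\lambda_0(M)\le\lambda_0(\HH^n)=(n-1)^2/4$.

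For the lower bounds (ii) and (iii) I would exhibit explicit positive $\Ga$-invariant (super)solutions on $\HH^n$ and descend them. When $\delta\ge(n-1)/2$, let $\{\mu_x\}_{x\in\HH^n}$ be a Patterson--Sullivan density of dimension $\delta$ for $\Ga$, so $\ga_*\mu_x=\mu_{\ga x}$ and $d\mu_x/d\mu_o(\xi)=e^{-\delta\beta_\xi(x,o)}$ with $\beta_\xi$ the Busemann cocycle, and put $\Phi(x)=\|\mu_x\|=\int_{\geo\HH^n}e^{-\delta\beta_\xi(x,o)}\,d\mu_o(\xi)$. Then $\Phi>0$, $\Phi$ is $\Ga$-invariant since total mass is preserved by pushforward, and differentiating under the integral with $|\nabla\beta_\xi|=1$, $\Delta\beta_\xi=n-1$ (so $-\Delta e^{-\delta\beta_\xi}=\delta(n-1-\delta)e^{-\delta\beta_\xi}$) gives $-\Delta\Phi=\delta(n-1-\delta)\Phi$; hence $\Phi$ descends to a positive $\delta(n-1-\delta)$-eigenfunction on $M$ and the ground state transform yields $\lambda_0(M)\ge\delta(n-1-\delta)$. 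When $\delta<(n-1)/2$, the Poincaré series converges at $s=(n-1)/2>\delta$, so $u(x)=\sum_{\ga\in\Ga}e^{-\frac{n-1}{2}d_{\HH^n}(x,\ga o)}$ is finite, positive and $\Ga$-invariant on $\HH^n$; since $\Delta e^{-s\rho}=(s^2-(n-1)s\coth\rho)e^{-s\rho}\le(s^2-(n-1)s)e^{-s\rho}$ for $\rho\ge0$ (with no extra atom at $\rho=0$ as $n\ge2$), at $s=(n-1)/2$ each summand satisfies $\Delta e^{-sd(\cdot,\ga o)}\le-\tfrac{(n-1)^2}{4}e^{-sd(\cdot,\ga o)}$ distributionally, so $u$ is a positive distributional supersolution for $\lambda=(n-1)^2/4$ and $\lambda_0(M)\ge(n-1)^2/4$. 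Together with (i), this settles the case $\delta\le(n-1)/2$.

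The remaining and hardest step is (iv): $\lambda_0(M)\le\delta(n-1-\delta)$ when $\delta>(n-1)/2$. Here I use the Green's-function fact. On $\HH^n$ the minimal positive Green's function of $-\Delta-\lambda$, for $0\le\lambda<(n-1)^2/4$, is a radial function comparable to $e^{-s(\lambda)\rho}$ with $s(\lambda)=\frac{n-1}{2}+\sqrt{\frac{(n-1)^2}{4}-\lambda}$, so $G^\lambda_M(\tilde x,\tilde y)=\sum_{\ga}G^\lambda_{\HH^n}(\tilde x,\ga\tilde y)\asymp P_{s(\lambda)}(\Ga,\tilde y)$ up to the bounded $\ga=\mathrm{id}$ term. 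Because $\delta>(n-1)/2$, one has $s(\lambda)<\delta\iff\lambda>\delta(n-1-\delta)$, and for such $\lambda<(n-1)^2/4$ the Poincaré series diverges, so $M$ has no positive Green's function for $-\Delta-\lambda$; the fact above then forces $\lambda\ge\lambda_0(M)$, and letting $\lambda\downarrow\delta(n-1-\delta)$ gives $\lambda_0(M)\le\delta(n-1-\delta)$, completing the proof. The main obstacle is exactly this step: it rests on the sharp asymptotics $G^\lambda_{\HH^n}\asymp e^{-s(\lambda)\rho}$ (turning the image sum into a genuine Poincaré series) and on the characterization of $\lambda_0$ via positive Green's functions, including the delicate behavior at the threshold $s(\lambda)=\delta$ --- which is why the argument must approach $\delta(n-1-\delta)$ from above and take a limit rather than work at the critical exponent directly. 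For geometrically finite $\Ga$ one can alternatively truncate $\Phi$ by cutoffs $\chi_R$ and use $R(\chi_R\Phi)=\delta(n-1-\delta)+\|\nabla\chi_R\|_{L^2(\Phi^2\,dV)}^2/\|\chi_R\Phi\|_{L^2}^2$, reducing (iv) to a bound on the growth of $\int_{B_R}\Phi^2$ supplied by Sullivan's shadow lemma; but that route needs geometric finiteness, whereas the Green's-function argument works for all nonelementary $\Ga$.
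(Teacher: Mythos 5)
The paper does not prove this statement; it cites it directly from Sullivan \cite{Sullivan3}, so there is no in-paper proof to compare against. Your blind reconstruction is, in outline, the standard argument: the lower bound $\lambda_0\ge\delta(n-1-\delta)$ via the total mass $\Phi(x)=\|\mu_x\|$ of the Patterson--Sullivan density (exactly Sullivan's construction), the lower bound $\lambda_0\ge (n-1)^2/4$ when $\delta<(n-1)/2$ via the absolutely convergent Poincar\'e series at $s=(n-1)/2$ as a positive supersolution, the trivial upper bound $\lambda_0\le(n-1)^2/4$ from the $\gamma=\mathrm{id}$ term in the heat kernel image sum, and the sharp upper bound $\lambda_0\le\delta(n-1-\delta)$ for $\delta>(n-1)/2$ by translating the image sum for the $\lambda$-Green's function into a Poincar\'e series with exponent $s(\lambda)=\tfrac{n-1}{2}+\sqrt{(n-1)^2/4-\lambda}$ and invoking divergence below the critical exponent. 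This is essentially Sullivan's route (see also Brooks' and Patterson's accounts), and the logic is sound, including the observation that the two formulas agree at $\delta=(n-1)/2$ and the reduction of the finite-volume case to $\delta=n-1$, $\lambda_0=0$.

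Two places where you are leaning on nontrivial facts that would need to be spelled out in a careful write-up. First, the ground-state transform is stated for $C^2$ supersolutions, but your $u=\sum_\gamma e^{-\frac{n-1}{2}d(\cdot,\gamma o)}$ is only Lipschitz at the orbit points; the Allegretto--Piepenbrink theorem does cover weak/distributional supersolutions, and your check that no Dirac mass appears at $\rho=0$ for $n\ge2$ is correct, but this should be flagged as an appeal to that theorem rather than a mollification hand-wave. Second, the implication ``if $\lambda<\lambda_0(M)$ then $-\Delta-\lambda$ has a finite minimal positive Green's function equal to the image sum'' is the load-bearing step of (iv); it is true on complete manifolds (coercivity of $-\Delta-\lambda$ gives subcriticality, and Tonelli then identifies the Green's function with the image sum), but it deserves a citation since the contrapositive is exactly what makes the Poincar\'e-series divergence bite. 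Modulo these citations, the argument is complete and genuinely proves Sullivan's theorem in the stated generality (all nonelementary $\Gamma$, not just geometrically finite), which your closing remark correctly identifies as the advantage of the Green's-function route over the $L^2$-cutoff argument using the shadow lemma.
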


\subsection{Patterson-Sullivan measure}\label{subsec:PSmeasure}
Given a point $p\in \HH^{n}$, and $\xi\in \geo \HH^{n}$, the \emph{Busemann function} $B(x, \xi)$ on $\HH^{n}$ with respect to $p$ is defined by 
$$B(x, \xi)=\lim_{t\rightarrow \infty} (d(x, \rho_{\xi}(t))-t)$$
where $\rho_{\xi}(t)$ is the unique geodesic ray from $p$ to $\xi$. The \emph{Busemann cocycle} $\beta_{\xi}(x, y): \HH^{n}\times \HH^{n}\times \geo \HH^{n}\rightarrow \mathbb{R}$ is defined by 
$$\beta_{\xi}(x, y)=\lim_{t\rightarrow \infty} (d(\rho_{\xi}(t), x)-d(\rho_{\xi}(t), y)). $$ 

For a discrete isometry subgroup $\Ga<\Isom(\HH^{n})$, there exists a family of finite measures $(\mu_{x})_{x\in \HH^{n}}$ on $\geo \HH^{n}$ whose support is the limit set $\La(\Ga)$ and satisfies the following conditions:
\begin{enumerate}
\item It is $\Ga$-invariant, i.e. $\gamma_{\ast}(\mu_{x})=\mu_{\gamma x}$.
\item The Radon-Nikodym derivatives exist for all $x, y\in \HH^{n}$, and for all $\xi\in \geo \HH^{n}$ they satisfy 
$$\dfrac{d\mu_{x}}{d\mu_{y}}(\xi)=e^{-\delta(\Ga) \beta_{\xi}(x, y)}. $$

\end{enumerate}
Such family of measures is a family of \emph{Patterson-Sullivan density} of dimension $\delta(\Ga)$ for $\Ga$. The Patterson-Sullivan measures have very nice properties when the group $\Ga$ is geometrically finite.

\begin{theorem}\cite[Theorem 3.1]{McMullen99}
\label{PS1}
Let $\Ga<\Isom(\HH^{n})$ be a geometrically finite Kleinian group. Then $\geo\HH^{n}$ carries a unique $\Ga$-invariant density $\mu$ of dimension $\delta(\Ga)$ with total mass one; Moreover, $\mu$ is nonatomic and supported on $\La(\Ga)$, and the Poincar\'e series diverges at $\delta(\Ga)$. 

\end{theorem}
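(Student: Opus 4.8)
The plan is to run the Patterson--Sullivan construction and then feed in Sullivan's analysis of the limit set of a geometrically finite group for the finer assertions. I take $\Ga$ nonelementary throughout (for a finite limit set the density can be atomic, and the nonelementary case is the only one needed below).

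\emph{Existence and support.} Fix $x_{0}\in\HH^{n}$. If $P_{s}(\Ga,x_{0})$ diverges as $s\downarrow\delta(\Ga)$, form the probability measures $\mu_{x,s}=P_{s}(\Ga,x_{0})^{-1}\sum_{\ga\in\Ga}e^{-s\,d(x,\ga x_{0})}\,\mathrm{Dirac}_{\ga x_{0}}$ on $\overline{\HH^{n}}$; if the series converges at $\delta(\Ga)$, insert Patterson's slowly increasing weight $h(d(x,\ga x_{0}))$ to restore divergence without moving the exponent \cite{Patterson}. Any weak-$\ast$ subsequential limit $\mu_{x}$ as $s\downarrow\delta(\Ga)$ is a finite $\Ga$-equivariant family with $d\mu_{x}/d\mu_{y}(\xi)=e^{-\delta(\Ga)\beta_{\xi}(x,y)}$, the Busemann cocycle estimates in $\HH^{n}$ making the limit go through while the $h$-term drops out. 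Since the atoms $\ga x_{0}$ accumulate only on $\La(\Ga)$, $\mathrm{supp}\,\mu_{x}\subseteq\La(\Ga)$, and being nonempty, closed and $\Ga$-invariant it equals $\La(\Ga)$ by minimality of the limit set. Rescale so $\|\mu_{x_{0}}\|=1$.

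\emph{Divergence at $\delta(\Ga)$ --- the crux.} Here geometric finiteness is used essentially. Combine Bowditch's decomposition $\La(\Ga)=\La_{\mathrm{rad}}(\Ga)\sqcup\La_{\mathrm{bp}}(\Ga)$ into radial limit points and bounded parabolic points \cite{Bowditch93}, the latter a countable union of finitely many $\Ga$-orbits, with Sullivan's shadow lemma: $\mu_{x_{0}}(\mathcal{O}_{r}(\ga x_{0}))\asymp e^{-\delta(\Ga)d(x_{0},\ga x_{0})}$ for $r$ large and $\ga x_{0}$ in the thick part of the convex core, with a controlled correction near cusps \cite{Sullivan2}. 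For a bounded parabolic point $\xi$ with $P=\stab_{\Ga}(\xi)$, the strict inequality $\delta(\Ga)>\delta(P)=\tfrac12\,\mathrm{rk}\,P$ (valid for nonelementary geometrically finite $\Ga$) gives $\sum_{\ga\in\Ga/P}e^{-\delta(\Ga)d(x_{0},\ga x_{0})}<\infty$, whence the cusp shadow estimate forces $\mu_{x_{0}}(\{\xi\})=0$; thus $\mu_{x_{0}}(\La_{\mathrm{bp}}(\Ga))=0$ and $\mu_{x_{0}}$ is concentrated on $\La_{\mathrm{rad}}(\Ga)$. By recurrence of the associated geodesic rays to the thick part, every radial limit point lies in $\limsup_{\ga}\mathcal{O}_{r}(\ga x_{0})$, the limsup over $\ga$ with $\ga x_{0}$ in the thick part. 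Were $P_{\delta(\Ga)}(\Ga,x_{0})<\infty$, then $\sum_{\ga}\mu_{x_{0}}(\mathcal{O}_{r}(\ga x_{0}))<\infty$, so the easy half of Borel--Cantelli would make this limsup, hence $\La_{\mathrm{rad}}(\Ga)$, $\mu_{x_{0}}$-null --- contradicting $\|\mu_{x_{0}}\|=1$. Hence $P_{\delta(\Ga)}(\Ga,x_{0})=\infty$.

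\emph{Nonatomicity and uniqueness.} Parabolic fixed points carry no mass by the previous step. For a conical limit point $\xi$, choose $\ga_{n}\in\Ga$ with $\ga_{n}x_{0}\to\xi$ within bounded distance of the ray $x_{0}\xi$; then $\mu_{x_{0}}(\{\ga_{n}^{-1}\xi\})=e^{-\delta(\Ga)\beta_{\xi}(\ga_{n}x_{0},x_{0})}\mu_{x_{0}}(\{\xi\})$ with $\beta_{\xi}(\ga_{n}x_{0},x_{0})\to-\infty$, so these masses blow up --- impossible for a finite measure once the $\ga_{n}^{-1}\xi$ are taken distinct (a loxodromic fixed point needs only the extra remark that conformality would force the relevant Busemann increment to be zero). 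By Bowditch's dichotomy $\mu_{x_{0}}$ is nonatomic. For uniqueness, divergence type lets us invoke the Hopf--Tsuji--Sullivan theorem in Roblin's form \cite{Roblin03}: the Bowen--Margulis measure is conservative and ergodic for the geodesic flow, so a second $\delta(\Ga)$-conformal density is proportional to $\mu$, and the normalization to total mass one makes it unique. The genuine obstacle is the divergence step: it is the only non-formal ingredient, resting on the near-cusp shadow estimates together with $\delta(\Ga)>\delta(P)$ for parabolic subgroups --- both standard, but each a real theorem.
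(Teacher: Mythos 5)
The paper does not prove this statement; it is quoted verbatim as \cite[Theorem 3.1]{McMullen99}, and McMullen in turn attributes it to the Patterson--Sullivan literature. So there is no ``paper's own proof'' to compare against, and what you have done is reconstruct the argument from its original sources. Your reconstruction follows the standard route --- Patterson's weighted orbital measures for existence, Bowditch's decomposition $\La = \La_{\mathrm{rad}} \sqcup \La_{\mathrm{bp}}$ plus Sullivan's shadow lemma to force divergence, conformality pushed along an orbit for nonatomicity, and the Hopf--Tsuji--Sullivan theorem (in Roblin's formulation) for uniqueness. The logical architecture and the identification of the genuinely non-formal step (divergence at $\delta(\Ga)$) are correct.

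The one place where the argument is imprecise is the treatment of bounded parabolic points. You assert that $\delta(\Ga) > \delta(P) = \tfrac12\,\mathrm{rk}\,P$ ``gives $\sum_{\ga\in\Ga/P} e^{-\delta(\Ga)\,d(x_0,\ga x_0)} < \infty$, whence the cusp shadow estimate forces $\mu_{x_0}(\{\xi\}) = 0$.'' This is not quite the right intermediate claim: the conformal mass of an orbit point is $\mu_{x_0}(\{\ga\xi\}) = e^{-\delta\beta_\xi(\ga^{-1}x_0,\,x_0)}\mu_{x_0}(\{\xi\})$, and $\beta_\xi(\ga^{-1}x_0, x_0) = d(\ga^{-1}x_0,x_0) - 2(\xi\mid\ga^{-1}x_0)_{x_0}$ differs from $d(x_0,\ga x_0)$ by a Gromov-product term that need not be bounded along a coset system, so convergence of the stated coset sum neither follows from $\delta(\Ga) > \delta(P)$ alone nor directly controls the atomic mass. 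The correct input is the cusp-depth refinement of the shadow lemma (Sullivan, Stratmann--Velani): for orbit points $p x_0$ ($p \in P$) deep in the cusp one has $\mu_{x_0}\bigl(\mathcal{O}_r(p x_0)\bigr) \asymp e^{-\delta d(x_0,p x_0)}\,e^{(2\delta - k)\,\rho(p)}$, where $k = \mathrm{rk}\,P$ and $\rho(p)$ is the penetration depth; since $\xi$ lies in every such shadow, $2\delta > k$ drives these masses to zero and forces $\mu_{x_0}(\{\xi\}) = 0$. This is presumably what you mean by the ``cusp shadow estimate,'' but the chain of implications as written runs through a statement that is not actually needed and is not an obvious consequence of the strict inequality. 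You should also flag that the strict inequality $\delta(\Ga) > \tfrac12\,\mathrm{rk}\,P$ is itself a nontrivial theorem for geometrically finite groups (Beardon, Dal'bo--Otal--Peign\'e) and must be established independently of divergence type, to avoid circularity with the Borel--Cantelli step that follows.
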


\begin{theorem}\cite[Theorem 1.2]{McMullen99}
\label{convergencePSmeasure}
Suppose that $(\Ga_{k}<\Isom(\HH^{n}))_{k\in \bN}$ is a sequence of Kleinian groups  converging strongly to $\Ga<\Isom(\HH^{n})$. If $\Ga$ is geometrically finite with $\delta(\Ga)>(n-1)/2$, then the Patterson-Sullivan densities $\mu_{k}$ of $\Ga_{k}$ converge to the Patterson-Sullivan density $\mu$ of $\Ga$ in the weak topology on measures. 
\end{theorem}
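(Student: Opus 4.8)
The plan is a weak-$\ast$ compactness argument: every subsequential limit of the densities is identified with $\mu$ using the uniqueness statement of Theorem~\ref{PS1}. Fix a basepoint $o\in\HH^n$ and normalize the Patterson–Sullivan densities so that $(\mu_k)_o$ and $\mu_o$ are probability measures on the compact space $\geo\HH^n$. Since the space of probability measures on $\geo\HH^n$ is weak-$\ast$ compact and metrizable, it suffices to show that every weak-$\ast$ subsequential limit $\nu$ of $\big((\mu_k)_o\big)_{k\in\bN}$ equals $\mu_o$; convergence of the full sequence at $o$, and hence (through the conformality relation) at every basepoint, then follows. So I pass to a subsequence with $(\mu_k)_o\to\nu$ weak-$\ast$ and aim to prove $\nu=\mu_o$.

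The two ingredients are: (i) the convergence of critical exponents $\delta(\Ga_k)\to\delta(\Ga)$, valid here because $\Ga$ is geometrically finite with $\delta(\Ga)>(n-1)/2$ (recalled in the introduction, \cite{CanaryTaylor,McMullen99}); and (ii) uniqueness of the $\Ga$-invariant conformal density of dimension $\delta(\Ga)$ with total mass one, Theorem~\ref{PS1}. To combine them, fix $\ga\in\Ga$ and use strong (in particular algebraic) convergence to choose $\ga_k\in\Ga_k$ with $\ga_k\to\ga$ in $\Isom(\HH^n)$; then $\ga_k\to\ga$ uniformly on $\HH^n\cup\geo\HH^n$ and $\ga_k o\to\ga o$. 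For every $\phi\in C(\geo\HH^n)$, $\Ga_k$-invariance and conformality of $(\mu_k)_x$ yield
\[
\int_{\geo\HH^n}\phi(\ga_k\xi)\,d(\mu_k)_o(\xi)=\int_{\geo\HH^n}\phi(\xi)\,e^{-\delta(\Ga_k)\,\beta_\xi(\ga_k o,\,o)}\,d(\mu_k)_o(\xi).
\]
Letting $k\to\infty$, continuity of the Busemann cocycle together with $\ga_k\to\ga$, $\ga_ko\to\ga o$ and $\delta(\Ga_k)\to\delta(\Ga)$ makes both integrands converge uniformly in $\xi$, while $(\mu_k)_o\to\nu$ weak-$\ast$; hence
\[
\int_{\geo\HH^n}\phi(\ga\xi)\,d\nu(\xi)=\int_{\geo\HH^n}\phi(\xi)\,e^{-\delta(\Ga)\,\beta_\xi(\ga o,\,o)}\,d\nu(\xi).
\]
Taking $\phi\equiv 1$ gives $\Vert\nu\Vert=\lim_k\Vert(\mu_k)_o\Vert=1$, there being no escape of mass from the compact $\geo\HH^n$. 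Setting $\nu_o:=\nu$ and $d\nu_x/d\nu_o(\xi):=e^{-\delta(\Ga)\beta_\xi(x,o)}$, the cocycle identity $\beta_\xi(x,z)=\beta_\xi(x,y)+\beta_\xi(y,z)$ makes $(\nu_x)_{x\in\HH^n}$ a consistent conformal density of dimension $\delta(\Ga)$, and the displayed identity (again via the cocycle identity) promotes $\Ga$-invariance from $x=o$ to all $x$. Thus $(\nu_x)$ is a $\Ga$-invariant density of dimension $\delta(\Ga)$ with total mass one, and Theorem~\ref{PS1} forces $\nu=\nu_o=\mu_o$.

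I expect ingredient (i), the convergence of critical exponents, to be the main obstacle: it is the genuinely deep input, resting on the spectral characterization of $\lambda_0$ and on geometric finiteness of the limit, and it is exactly here that the hypothesis $\delta(\Ga)>(n-1)/2$ is used. Once (i) is granted, the identification of the weak limit above is soft; the one point needing care there is that the geometric data --- the isometries $\ga_k$, the points $\ga_k o$ and the Busemann cocycle --- converge \emph{uniformly} on the relevant compact sets, so that the weak-$\ast$ limit may legitimately be passed through the conformal transformation rule. Ingredient (ii) also uses geometric finiteness (geometrically finite groups are of divergence type, which is what forces uniqueness of the conformal density), so both hypotheses of the statement do real work even though the bookkeeping that assembles (i) and (ii) into the proof is routine.
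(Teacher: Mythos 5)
The paper does not prove this statement itself; it cites McMullen's Theorem~1.2 and notes that the proof extends from $\HH^3$ to $\HH^n$, so the relevant comparison is with McMullen's argument. Your weak-$\ast$ compactness scheme reproduces its essential skeleton --- normalize at a basepoint, extract a subsequential limit $\nu$ (no escape of mass since $\geo\HH^n$ is compact), use algebraic convergence $\ga_k\to\ga$ together with uniform convergence of the Busemann cocycle on compacta to verify that $\nu$ is a $\Ga$-invariant conformal density of dimension $\delta(\Ga)$ with unit mass, then apply the uniqueness in Theorem~\ref{PS1} --- and the soft bookkeeping (in particular the promotion, via the cocycle identity and equivariance of $\beta$, of the identity $\ga_*\nu_o=\nu_{\ga o}$ to full $\Ga$-invariance) is correct. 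The one caution is the provenance of ingredient (i). You cite McMullen's own paper for $\delta(\Ga_k)\to\delta(\Ga)$, but in that paper the exponent convergence and the measure convergence are proved in tandem, both resting on the uniform control of cuspidal Poincar\'e series (this is exactly what the present paper flags in the sentence following the theorem statement, and what is encapsulated in Theorem~\ref{thm:cusptail}); so ``cite McMullen's Theorem~1.5 to reprove his Theorem~1.2'' carries a latent risk of circularity. Your decomposition is in fact legitimate --- the exponent convergence can be sourced independently (Canary--Taylor in the convex cocompact case, or via the spectral convergence $\lambda_0(M_k)\to\lambda_0(M)$ and Theorem~\ref{Sullivan3}) --- but a careful write-up should name that independent source rather than leaning on the paper whose result is being reproved. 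The other small remark: for the uniqueness step to apply you need to know $\delta(\Ga_k)$ actually converges, not merely along the subsequence you extracted; this does follow from the same cited result, but it is worth saying explicitly, since otherwise you would only obtain that $\nu$ has dimension $\lim_j\delta(\Ga_{k_j})$ for the chosen subsequence.
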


\begin{remark}
Theorem 1.2 in \cite{McMullen99} is stated for the $3$-dimensional hyperbolic space. However, the proof works exactly the same for general hyperbolic spaces $\HH^{n}$. 
\end{remark}

The proof Theorem \ref{convergencePSmeasure} relies heavily on the analysis of the Poincar\'e series of parabolic groups and its uniform convergence. This is also essential in the later proof of the convergence of Bowen-Margulis measures and the uniform counting formulas for orthogeodesics in the rest of the paper. For readers' convenience, we list the analytic properties of the Poincar\'e series corresponding to parabolic groups in the section. The details can be found in \cite[Section 6]{McMullen99}.

Let $L<\Isom(\HH^{n})$ be a torsion-free elementary isometry subgroup, which is either a hyperbolic group, i.e. a cyclic group generated by a loxodromic isometry,  or a parabolic group. Given $x\in \geo \HH^{n}$ and $s\geq 0$, the absolute Poincar\'e series for $L$ is defined to be 
$$P_{s}(L, x)=\sum_{\gamma\in L} |\gamma'(x)|^{s},$$
where the derivative is measured in the spherical measure. Given any open subset $U\subset \geo \HH^{n}$, define
$$P_{s}(L, U, x)=\sum_{\gamma(x)\in U} |\gamma'(x)|^{s}.$$
Suppose that $(L_k<\Isom(\HH^{n}))_{k\in\mathbb{N}}$ is a sequence of torsion-free elementary isometry subgroups which converges geometrically to a parabolic group $L<\Isom(\HH^{n})$ with parabolic fixed point $c$, i.e.,  $L_k$ converges to $L$ in the Hausdorff topology on closed subsets of $\Isom(\HH^{n})$.  The Poincar\'e series for  $(L_k, s_k)$ where $s_k\geq 0$ \emph{converges uniformly} if for any compact subset $K\subset \geo \HH^{n}\setminus \{c\}$ and $\epsilon>0$, there is a neighborhood $U$ of $c$ such  that for all $x\in K$, 
$$P_{s_k}(L_k, U, x)<\epsilon$$
for $k\gg 0$ sufficiently large. 
By using the same argument of the proof of Theorem 6.1 in \cite{McMullen99}, we have the following:
\begin{theorem}
\label{thm:cusptail}
Suppose that $(\Ga_{k}<\Isom(\HH^{n}))_{k\in\mathbb{N}}$ is a sequence of torsion-free discrete isometry subgroups which strongly converges to a geometrically finite torsion-free group $\Ga<\Isom(\HH^{n})$. Let $L<\Ga$ be a parabolic subgroup and $(L_k<\Ga_k)_{k\in\mathbb{N}}$ be a sequence of elementary groups which converges to $L$ geometrically. If 
\begin{equation*}
\delta(\Ga)> \begin{cases}
1 &\text{if $n=3$, or}\\
(n-2)/2 &\text{if $n> 3$,}
\end{cases}
\end{equation*}
then the Poincar\'e series for $(L_k, \delta(\Ga_k))$ converges uniformly to the one of $(L, \delta(\Ga))$.

\end{theorem}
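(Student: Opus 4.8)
The plan is to follow the strategy of \cite[Section 6]{McMullen99}, carrying out the estimates for a parabolic limit group $L$ of arbitrary rank $k_0\le n-1$ rather than only for rank-$2$ cusps in $\HH^3$.

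\emph{Normalization and reduction.} Strong convergence is preserved under simultaneously conjugating $\Ga_k$ and $\Ga$ by a fixed convergent sequence of isometries, and such a conjugation changes each $P_s(L_k,x)$ only by factors that are uniformly bounded above and below as long as $x$ and $\gamma x$ stay in compact sets; so I may assume the parabolic fixed point of $L$ is $c=\infty$ in the upper half-space model. Then $\geo\HH^n\setminus\{c\}=\R^{n-1}$, a cofinal family of neighborhoods of $c$ is $U_R=\{\,y\in\R^{n-1}:|y|>R\,\}\cup\{\infty\}$, and for $x$ in a fixed compact set $K\subset\R^{n-1}$ the spherical derivative of a M\"obius map $\gamma$ obeys $|\gamma'(x)|\asymp |\gamma'(x)|_{\mathrm{eucl}}\,(1+|\gamma x|^2)^{-1}$ with implied constants depending only on $K$. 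It therefore suffices to show that
\[
\sup_{x\in K}\ \sum_{\gamma\in L_k:\ |\gamma x|>R}\Big(\frac{|\gamma'(x)|_{\mathrm{eucl}}}{1+|\gamma x|^2}\Big)^{s_k}\ \longrightarrow\ 0\quad\text{as }R\to\infty,
\]
uniformly in $k\gg0$, where $s_k=\delta(\Ga_k)\to\delta(\Ga)$.

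\emph{The non-degenerate case.} If $L_k$ is, like $L$, a parabolic group of rank $k_0$, then after passing to a fixed-index subgroup it is a lattice of Euclidean motions preserving a $k_0$-plane; discreteness of the Chabauty limit $L$ forbids the lattice from becoming arbitrarily thin or eccentric, so the associated lattices $\Lambda_k\subset\R^{k_0}$ converge to the lattice $\Lambda$ of $L$ with covolumes and systoles staying in a fixed compact range. The sum above is then $\asymp\sum_{w\in\Lambda_k:\ |w|>R'}|w|^{-2s_k}$, which by comparison with $\int_{|v|>R'}|v|^{-2s_k}\,dv$ over $\R^{k_0}$ is $\le C\,(R')^{k_0-2s_k}$ with $C$ independent of $k$. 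Here the hypothesis on $\delta(\Ga)$ does its work: together with $\delta(\Ga)\ge\delta(L)=k_0/2$ (and strictness of this inequality when $k_0=n-1$, since $\Ga$ is nonelementary and geometrically finite) it forces $2s_k-k_0\ge 2\delta(\Ga)-k_0-o(1)$ to be positive and bounded away from $0$ along the sequence, whence the bound tends to $0$ as $R'\to\infty$ uniformly in $k$.

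\emph{The main obstacle: the degenerate case.} The delicate situation, and the reason the statement is not a formal consequence of the convergence of the individual series, is when $L_k$ itself degenerates --- the prototype being $L=\langle p\rangle$ a rank-$1$ parabolic and $L_k=\langle g_k\rangle$ with $g_k$ a loxodromic whose translation length tends to $0$ (a short geodesic pinching onto the cusp). Then the orbit $L_k\cdot x$ has a \emph{central} portion on which $g_k^m$ is $C^1$-close to the translation $p^m$, up to a scale $T_k\to\infty$, contributing essentially the rank-$1$ estimate of the previous paragraph; and a \emph{peripheral} portion of infinitely many points $g_k^m x$ pushed exponentially close to the fixed points of $g_k$ near $c$, contributing a geometric sum whose contraction ratio tends to $1$. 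Bounding the peripheral part uniformly in $k$ is the crux: one must play off the rate at which $g_k$ becomes parabolic against the rate at which its fixed points escape toward $c$, and it is precisely strong convergence (controlling both the algebra, via $\rho_k(p)\to p$, and the geometry of the approximating groups inside the discrete $\Ga_k$) that keeps this sum under control. As in \cite[Section 6]{McMullen99} I would split the orbit sum at the scale $T_k$, estimate the central part by the comparison of the non-degenerate case and the peripheral part by a geometric-series bound made uniform using the lower bound $s_k\ge\delta(\Ga)-o(1)>k_0/2$, and then let $R\to\infty$ --- which eventually forces $R\ll T_k$, so the central estimate governs. Combining the two cases yields the asserted uniform convergence of the Poincar\'e series for $(L_k,\delta(\Ga_k))$ to that for $(L,\delta(\Ga))$.
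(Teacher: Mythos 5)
The paper gives no independent proof of this theorem: the authors simply state that it follows ``by using the same argument of the proof of Theorem 6.1 in \cite{McMullen99},'' so the proof you are asked to reproduce is McMullen's, adapted to arbitrary $n$ and arbitrary cusp rank. Your sketch correctly reconstructs the outline of that argument: normalize so the parabolic fixed point is $c=\infty$ in the upper half-space model, distinguish non-degenerate approximants (parabolic $L_k$ of the same rank as $L$, with uniformly controlled lattices) from degenerate ones (loxodromics pinching onto the parabolic), and in the degenerate case split the orbit at a scale $T_k$ into a central portion well-approximated by the limit parabolic and a peripheral portion accumulating on the approaching fixed points. The non-degenerate comparison $\sum_{|w|>R'}|w|^{-2s_k}\lesssim (R')^{k_0-2s_k}$ is sound.

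However, the degenerate case --- which is the entire substance of the theorem and of McMullen's Theorem 6.1 --- is only described, not proved, and what you do say about it contains a mismatch that points to a genuine gap. You propose to control the peripheral part ``by a geometric-series bound made uniform using the lower bound $s_k\ge\delta(\Ga)-o(1)>k_0/2$,'' i.e.\ using only Beardon's automatic inequality $\delta(\Ga)>(\textup{rank of }L)/2$. But the theorem's actual hypothesis is $\delta(\Ga)>1$ for $n=3$ and $\delta(\Ga)>(n-2)/2$ for $n>3$, \emph{independently of the rank $k_0$}; for a rank-$1$ cusp in $\HH^3$ your inequality gives only $s_k>1/2$, strictly weaker than the $\delta>1$ you are given and required to use. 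This is not cosmetic: when a cyclic loxodromic $\langle g_k\rangle$ with complex translation length $\ell_k+i\theta_k$, $\ell_k\to 0$, pinches onto a parabolic, the peripheral orbit $\{g_k^m x\}_{|m|>T_k}$ spirals toward the nearby fixed points, and after passing to the spherical metric the naive geometric tail picks up a factor on the order of $(s_k\ell_k)^{-1}$, which blows up. Handling this requires playing the decay of the spherical derivative of $g_k^m$ against the rate at which the fixed points of $g_k$ escape to $c$, together with the effective jump in rank that Dehn-type degenerations produce (this is exactly why $\langle g_k\rangle$ can Chabauty-converge to a rank-$2$ group), and it is precisely here that the numerical hypothesis on $\delta(\Ga)$ must enter. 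Your proposal never carries out this estimate and never explains why $\delta(\Ga)>(n-2)/2$ (or $>1$ when $n=3$) is the right threshold; until it does, it is a correct plan, not a proof.
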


\subsection{Bowen-Margulis measure}
\label{subsec:BMmeasure}
The \emph{Bowen-Margulis} measure is a measure defined on the unit tangent bundle $T^{1} \HH^{n}$ of $\HH^{n}$ in terms of the  Patterson-Sullivan measures. One can identify the unit tangent bundle $T^{1}\HH^{n}$ with the set of  geodesic lines $l: \R\rightarrow \HH^{n}$ such that the inverse map sends the geodesic line $l$ to its unit tangle vector $\dot{l}(0)$ at $t=0$. Given a  point $x_0\in \HH^{n}$,  we can also identify $T^{1}\HH^{n}$  with $\geo \HH^{n}\times \geo \HH^{n}\times \R$ via the \emph{Hopf's parametrization}:
$$v\rightarrow (v_{-}, v_{+}, t)$$
where $v_{-}, v_{+}$ are the endpoints at $-\infty$ and $\infty$ of the geodesic line defined by $v$ and $t$ is the signed distance of the closest point to $x_{0}$ on the geodesic line.

 We let $\pi: T^{1} \HH^{n}\rightarrow \HH^{n}$ denote the basepoint projection. The \emph{geodesic flow} on $T^{1}\HH^{n}$ is the smooth one-parameter group of diffeomorphisms $(g^{t})_{t\in \R}$ of $T^{1} \HH^{n}$ such that $g^{t}(l(s))=l(s+t)$, for all $l\in T^{1} \HH^{n}$, and $s, t\in \R$. Similarly one can define the geodesic flow on $T^{1}M$ by replacing the geodesic lines $l$ by locally geodesic lines. The Kleinian group $\Ga$ acts on $T^{1}\HH^{n}$ via postcomposition, i.e. $\gamma\circ l$, and it commutes with the geodesic flow. For simplicity, we sometimes write $\delta(\Ga)$ as $\delta$ if the context is clear in the rest of the paper. 

Given the Patterson-Sullivan density $(\mu_{x})_{x\in \HH^{n}}$ and a point $x_{0}\in \HH^{n}$, one can define the \emph{Bowen-Margulis measure} $\tilde{m}_{\rm{BM}}$ on $T^{1}\HH^{n}$ given by 
$$d\tilde{m}_{\rm{BM}}(v)=e^{-\delta(\beta_{v_{-}}(\pi(v), x_{0})+\beta_{v_{+}}(\pi(v), x_{0}))}d\mu_{x_{0}}(v_{-})d\mu_{x_{0}}(v_{+})dt= e^{-2\delta(v_-|v_+)_{x_0}}d\mu_{x_{0}}(v_{-})d\mu_{x_{0}}(v_{+})dt.$$
Here we introduce the notation $(v_-|v_+)_{x_0} = \frac12 (\beta_{v_{-}}(y, x_{0})+\beta_{v_{+}}(y, x_{0}))$, where $y$ is any point in the geodesic joining $v_-, v_+$. It is not hard to verify that $(v_-|v_+)_{x_0}$ does not depend on $y$.

The Bowen-Margulis measure $\tilde{m}_{\rm{BM}}$ is independent of the choice of $x_{0}$, and it is invariant under both the action of the group $\Ga$ and the geodesic flow. Hence, it descends to a measure $m_{\rm{BM}}$ on $T^{1}M$ invariant under the quotient geodesic flow, which is called the \emph{Bowen-Margulis} measure on $T^{1}M$.

\begin{theorem}\cite{Sullivan2, Babillot}
Let $\Ga<\Isom(\HH^{n})$ be a geometrically finite Kleinian group. The Bowen-Margulis measure $m_{\rm{BM}}$ has finite total mass, and the geodesic flow is mixing with respect to $m_{\rm{BM}}$. 

\end{theorem}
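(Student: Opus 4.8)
The plan is to prove the two assertions separately: finiteness of the total mass $\Vert m_{\rm{BM}}\Vert$ following Sullivan, and mixing following Babillot, with finiteness feeding into the mixing argument. For finiteness I would first note that $\tilde m_{\rm{BM}}$ is carried by the geodesics whose two endpoints both lie in $\La(\Ga)$, so $m_{\rm{BM}}$ is supported on $T^{1}C(M)$. Using geometric finiteness, decompose the convex core as $C(M) = C(M)^{>\epsilon} \sqcup \bigsqcup_{i} \mathcal{T}_{\epsilon}(P_{i})/P_{i}$, a compact truncated core together with finitely many Margulis cusps. Over the compact piece $C(M)^{>\epsilon}$ the mass is finite since $\tilde m_{\rm{BM}}$ is a locally finite, $\Ga$-invariant measure, so the problem reduces to bounding the mass that each cusp carries. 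Fixing a parabolic subgroup $P<\Ga$ of rank $k$ with bounded parabolic fixed point $\xi_{0}$, I would work in the upper half-space model with $\xi_{0}=\infty$ and apply the Patterson--Sullivan shadow lemma together with its refinement near bounded parabolic points (the global measure formula of Stratmann--Velani, rooted in Sullivan's estimates), which yields $\mu_{x_{0}}(B(\xi_{0},r)) \asymp r^{2\delta-k}$ and controls $\mu_{x_{0}}$ of the shadows of points deep inside the horoball. Substituting these bounds into $d\tilde m_{\rm{BM}}(v) = e^{-2\delta(v_{-}|v_{+})_{x_{0}}}\,d\mu_{x_{0}}(v_{-})\,d\mu_{x_{0}}(v_{+})\,dt$ and summing over the $P$-action, the cusp contribution is majorized by a geometric-type series whose convergence is equivalent to the strict inequality $2\delta>k$. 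That inequality $\delta(\Ga) > k(P)/2 = \delta(P)$ for every parabolic $P<\Ga$ is precisely the critical-gap property of geometrically finite groups (Dal'bo--Otal--Peign\'e), and it is the one nontrivial input. Summing the finitely many cusp contributions and the core contribution gives $\Vert m_{\rm{BM}}\Vert<\infty$.

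For mixing, I would normalize $m_{\rm{BM}}$ to a probability measure and first record $m_{\rm{BM}}$-ergodicity of the geodesic flow $(g^{t})$: this is the Hopf argument in Sullivan's form, deducing ergodicity of $(g^{t})$ from ergodicity (and conservativity) of the diagonal $\Ga$-action on $(\geo\HH^{n}\times\geo\HH^{n}, \mu_{x_{0}}\otimes\mu_{x_{0}})$, which follows from divergence of the Poincar\'e series at $\delta$ (Theorem \ref{PS1}) via Hopf's argument applied to the stable and unstable horospherical foliations. To upgrade ergodicity to mixing I would invoke Babillot's theorem: for a complete negatively curved manifold with non-elementary fundamental group and finite Bowen--Margulis measure, $(g^{t})$ is mixing as soon as the length spectrum $\{\tau(\ga):\ga\in\Ga\ \text{loxodromic}\}$ is not contained in a discrete subgroup $c\mathbb{Z}\subset\R$; this non-arithmeticity hypothesis is automatic for a non-elementary Kleinian group (one can exhibit two loxodromic elements with $\mathbb{Q}$-incommensurable translation lengths). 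Hence $(g^{t})$ is mixing with respect to $m_{\rm{BM}}$.

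The main obstacle is the cusp estimate in the finiteness step: obtaining the correct order of the Patterson--Sullivan measure of shadows uniformly throughout a Margulis horoball and combining it with the strict critical-gap inequality $\delta(\Ga)>k_{\max}/2$ is the only place where both geometric finiteness and the fine structure of the Patterson--Sullivan density are genuinely used. The remaining ingredients---local finiteness of $\tilde m_{\rm{BM}}$ over the compact core, Hopf's argument for ergodicity, and Babillot's criterion together with the elementary non-arithmeticity of the Kleinian length spectrum---are standard.
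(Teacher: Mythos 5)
This theorem is quoted in the paper from \cite{Sullivan2, Babillot} without proof, so there is no in-paper argument to compare against; what you have written is a faithful sketch of the standard proofs. Your decomposition of $T^{1}C(M)$ into a compact truncated core plus finitely many cusp pieces, the observation that local finiteness of $\tilde m_{\rm{BM}}$ disposes of the compact part, the reduction of each cusp contribution to a series indexed by the parabolic subgroup, and the identification of the critical-gap inequality $\delta(\Ga)>\delta(P)$ as the decisive input are exactly how the finiteness is established. For mixing, the route Hopf--Tsuji--Sullivan ergodicity (from divergence at $\delta$, which Theorem~\ref{PS1} records) followed by Babillot's non-arithmeticity criterion is the standard one, and non-arithmeticity of the marked length spectrum for a non-elementary Kleinian group is indeed automatic.

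Two small remarks. First, the cusp estimate admits two essentially equivalent packagings: you invoke the shadow lemma / Stratmann--Velani global measure formula to get $\mu_{x_{0}}(B(\xi_{0},r))\asymp r^{2\delta-k}$ and then integrate; one can instead bound the cusp mass directly by a series of the form $\sum_{p\in P}e^{-\delta\, d(x,px)}\bigl(d(x,px)+C\bigr)$ as in Dal'bo--Otal--Peign\'e, which is the form the present paper uses later in Proposition~\ref{lem:ControlThinPart} to get the uniform version along a strongly convergent sequence. The two are interchangeable here; the series form is what generalizes better to variable curvature and to uniform estimates. Second, the strict inequality $\delta(\Ga)>k/2$ for a non-elementary geometrically finite Kleinian group predates Dal'bo--Otal--Peign\'e (it already follows from Sullivan's divergence-type result together with Beardon-type estimates); attributing it to DOP is harmless but slightly anachronistic for constant curvature, where DOP's contribution is rather the converse direction (failure of finiteness when the gap fails) and the extension to pinched negative curvature. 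Neither point is a gap; your sketch is correct.
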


Another related measure we consider in the paper is the so called \emph{skinning measure}. Let $D$ be a nonempty proper closed convex subset in $\HH^{n}$. We denote its boundary by $\partial D$ and the set of points at infinity by $\geo D$. Let 
\begin{equation}
\label{definemap}
P_{D}: \HH^{n}\cup (\geo \HH^{n}\setminus \geo D)\rightarrow D
\end{equation}
be the \emph{closest point map}. In particular, for points $x\in \HH^{n}$, $P_{D}(x)$ is the point on $D$ which minimizes the distance function $d(y, x)$ for $y\in D$,  and for points $\xi\in \geo\HH^{n}\setminus \geo D$, $P_{D}(\xi)$ is the point  $y\in D$ which minimizes the function $y\rightarrow \beta_{\xi}(y, x_{0})$ for a given $x_{0}$.

The \emph{outer unit normal bundle} $\partial^{1}_{+}D$ of the boundary of $D$ is the topological submanifold of $T^{1}\HH^{n}$ consisting of the geodesic lines $v: \R\rightarrow \HH^{n}$ such that $P_{D}(v_{+})=v(0)$. Similarly, one can define the \emph{inner unit normal} bundle $\partial^{1}_{-} D$ which consists of geodesic lines $v$ such that $P_{D}(v_{-})=v(0)$. Note that when $D$ is totally geodesic, $\partial^{1}_{+}D=\partial^{1}_{-}D$. 
Given the Patterson-Sullivan density $(\mu_{x})_{x\in \HH^{n}}$, the \emph{outer skinning measure} on $\partial^{1}_{+}D$ is the measure $\tilde{\sigma}^{+}_{D}$  defined by 
$$d\tilde{\sigma}_{D}^{+}(v)=e^{-\delta\beta_{v_{+}}(P_{D}(v_{+}), x_{0})}d\mu_{x_{0}}(v_{+}).$$
Similarly, one can define the \emph{inner skinning measure} $\tilde{\sigma}_{D}^{-}$ on $\partial^{1}_{-}D$ as follows:
$$d\tilde{\sigma}_{D}^{-}(v)=e^{-\delta\beta_{v_{-}}(P_{D}(v_{-}), x_{0})}d\mu_{x_{0}}(v_{-}).$$

For simplicity, we sometimes  identify a precisely invariant subset  $C\subset M=\HH^{n}/ \Ga$ with its fundamental domain $\tilde{C}$ in the universal cover, and use the notation $\sigma^{\pm}_{\partial C}$ to denote the outer/inner skinning measure $\tilde{\sigma}^{\pm}_{\tilde{C}}$ on $\partial^{1}_{\pm}\tilde{C}$.

\subsection{Convergence of convex sets}
\label{sec:convergenceconvex}
In this subsection, we first define strong convergence that admits \textit{disconnected} limits. Suppose that $((M_k,g_k))_{k\in\mathbb{N}}$ is a sequence of  $n$-manifolds of pinched sectional curvature $-\kappa^2\leq K\leq -1$. We say that the sequence converges \emph{strongly} to a (possibly disconnected) geometrically finite $n$-manifold $(N=\cup_{i}^{m} N_i,g)$ if the following holds:
\begin{enumerate}
    \item There exist points $p_{k,i}\in M_k$, $p_{i}\in N_i$ so that $d(p_{k,i}, p_{k,j})\rightarrow+\infty$ for $i\neq j$ and $(M_k,p_{k,i})\rightarrow(N_i,p_i)$ geometrically, i.e., there exists an exhaustion $U_{1,i}\subset U_{2,i} \subset\ldots$ of relatively compact open sets of $(N_i,p_i)$ and smooth maps $\varphi_{k,i}:U_{k,i}\rightarrow M_k$  so that $\varphi_{k,i}(p_i) = p_{k,i}$ and $\varphi_{k,i}^*g_k$ converges smoothly in compact sets to $g$.
    \item For any $\epsilon$, the truncated cores $C(M_k)^{>\epsilon}$ converge to the disjoint union $\cup_i C(N_i)^{>\epsilon}$. This means that for large $k$  we have $C(M_k)^{>\epsilon} = \cup_{i=1}^m C(M_{k,i})^{>\epsilon}$ where $C(M_{k,i})^{>\epsilon}\subset Im(\varphi_{k,i})$ and $\varphi^{-1}_{k,i}(C(M_{k,i})^{>\epsilon})$ converges to $C(N_i)^{>\epsilon}$ in the Hausdorff topology of compact sets in $N_i$.
\end{enumerate}

The definition accommodates situations like Dehn drilling and pinching closed geodesics in hyperbolic 3-manifolds. The pinching case can result in disconnected limit manifolds. If $M_k$ and $N$ are hyperbolic manifolds, and $N$ is connected, this definition is equivalent to the one described in \cite{McMullen99} for strong convergence. Because of this, in the cases when $N$ is connected we will simply omit the mention of \emph{possibly disconnected}, as well as the sub-index $i$ from our notation.

Moreover, given a sequence  $(M_k)_{k\in\mathbb{N}}$ converging strongly to a possibly disconnected manifold $N$, we say that the sequence of functions $(f_k:M_k\rightarrow\mathbb{R})_{k\in \bN}$ converges strongly to a function $f:N\rightarrow\mathbb{R}$ if, with the notation above, we have that for any basepoint $p_i$ the sequence $(f_k\circ\varphi_{k,i})_{k\in \bN}$ converges smoothly in compact sets to $f$. Similarly, if $\Sigma_k,\Sigma$ are smooth properly embedded submanifolds in $M_k, N$, we say that $(\Sigma_k)_{k\in \bN}$ converges strongly to $\Sigma$ if $\varphi_{k,i}^{-1}(\Sigma_k)$ converges smoothly in compact sets to $\Sigma$.  Since for any fixed compact set in $N_i$ the maps $\varphi_{k, i}$ are embeddings for $k$ sufficiently large, we can define strong convergence of functions and submanifolds of $T^1 M_k$ to $T^1 M$ by composing the derivatives of $\varphi_{k, i}$ with the projections from $T^{*}M_k$ to $T^1 M_k$.

Using the definition of strong convergence, we obtain a straightforward corollary:
\begin{corollary}
Suppose that $(M_k)_{k\in\mathbb{N}}$ is a sequence of manifolds with negatively pinched curvature which converges strongly to a (possibly disconnected) geometrically finite manifold $N$ with negatively pinched curvature. Then the manifolds $M_k$ are also geometrically finite for sufficiently large $k$. 
\end{corollary}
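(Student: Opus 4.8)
The plan is to simply unwind the two relevant definitions. Geometric finiteness of $M_k$ means that for some (equivalently, any) $\epsilon\in(0,\epsilon(n,\kappa))$ the truncated core $C(M_k)^{>\epsilon}=C(M_k)\setminus M_k^{<\epsilon}$ is compact, and condition (2) in the definition of strong convergence is designed precisely to track these truncated cores. So first I would fix an $\epsilon$ smaller than the Margulis constant $\epsilon(n,\kappa)$ (which may be taken uniform in $k$, since the $M_k$ all have sectional curvature in $[-\kappa^2,-1]$) and smaller than the Margulis constant of $N$. Recall that $C(M_k)^{>\epsilon}$ is closed in $M_k$, being the difference of the closed convex core and the open thin part; the same holds for each piece $C(M_{k,i})^{>\epsilon}$ occurring in condition (2).

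By hypothesis $N=\cup_{i=1}^m N_i$ is geometrically finite, so each $C(N_i)^{>\epsilon}$ is compact. Condition (2) then gives, for all large $k$, a decomposition $C(M_k)^{>\epsilon}=\cup_{i=1}^m C(M_{k,i})^{>\epsilon}$ with $C(M_{k,i})^{>\epsilon}\subset \mathrm{Im}(\varphi_{k,i})$ and $\varphi_{k,i}^{-1}(C(M_{k,i})^{>\epsilon})\to C(N_i)^{>\epsilon}$ in the Hausdorff topology of compact subsets of $N_i$. I would then argue compactness piece by piece. Fix a compact neighborhood $K_i$ of $C(N_i)^{>\epsilon}$ in $N_i$, say its closed $1$-neighborhood. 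Hausdorff convergence to the compact set $C(N_i)^{>\epsilon}$ forces $\varphi_{k,i}^{-1}(C(M_{k,i})^{>\epsilon})\subset K_i$ for all large $k$, and since the $U_{k,i}$ exhaust $N_i$ we also have $K_i\subset U_{k,i}$ for $k$ large. The set $\varphi_{k,i}^{-1}(C(M_{k,i})^{>\epsilon})$ is relatively closed in $U_{k,i}$ (a preimage of a closed set under $\varphi_{k,i}$) and contained in the compact set $K_i\subset U_{k,i}$, hence it is closed in $N_i$ and therefore compact. As $C(M_{k,i})^{>\epsilon}\subset \mathrm{Im}(\varphi_{k,i})$, it is the image of this compact set under the continuous map $\varphi_{k,i}$, hence compact; the finite union over $i$ then shows $C(M_k)^{>\epsilon}$ is compact, i.e.\ $M_k$ is geometrically finite, for all sufficiently large $k$.

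I do not expect a genuine obstacle here: the statement is essentially immediate from the definition of strong convergence, which was crafted exactly so that the truncated cores behave well under the maps $\varphi_{k,i}$. The only points needing a line of care are (i) selecting $\epsilon$ below a Margulis constant valid for the whole family, using the uniform curvature pinching, and (ii) the elementary topological fact that a relatively closed subset of $U_{k,i}$ that is contained in a compact subset of $U_{k,i}$ is compact in $N_i$ — this is what upgrades the Hausdorff convergence supplied by condition (2) into honest compactness of each $C(M_{k,i})^{>\epsilon}$.
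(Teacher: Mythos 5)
Your proposal is correct and follows essentially the same route as the paper: fix $\epsilon$ below the Margulis constant, invoke item~(2) of the definition of strong convergence, and deduce compactness of $C(M_k)^{>\epsilon}$ from compactness of the truncated cores of the limit pieces. The paper states this in one line, taking the compactness of each $C(M_{k,i})^{>\epsilon}$ as immediate from the Hausdorff convergence; you have simply spelled out the elementary point-set step that justifies it.
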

\begin{proof}
Suppose that $N=\cup_{i}^{m} N_i$. The truncated core $C(N)^{>\epsilon}$ is compact for any $0<\epsilon<\epsilon(n, \kappa)$, since $N$ is geometrically finite. By item (2) in the definition of strong convergence, $C(M_{k})^{>\epsilon}$ is also compact for large $k$, since $C(M_{k, i})^{>\epsilon}$ is compact for large $k$, and all $1\leq i\leq m$. 
\end{proof}

In Section 3, we work on sequences of manifolds of  negatively pinched curvature that converge strongly to (possibly disconnected) limit manifolds. Given an $n$-dimensional  manifold $M$ with negatively pinched curvature (possibly disconnected) and a constant $\mu<(n-1)^{2}/4$, $Spec_{\mu}(M)$ is defined as the collection of eigenvalues of the negative Laplacian on $M$ less than $\mu$. If $M$ is disconnected,  $ Spec_\mu(M)$ agrees with the union of  $Spec_\mu$ of each component of $M$ (counting multiplicity). Specifically, a function $f:M\rightarrow\mathbb{R}$ satisfies the equation $-\Delta f = \lambda f$ if and only if its restriction to each component of $M$ is either an eigenfunction with eigenvalue $\lambda$, or $0$. Moreover, while taking orthonormal eigenfunctions for $M$ we can consider that each eigenfunction has support in a unique component of $M$.

In Section \ref{sec:measure} and Section \ref{uniformcounting}, we focus on sequences of hyperbolic manifolds strongly converging to connected limit manifolds. Suppose now $M=\HH^{n}/ \Ga$ is an $n$-dimensional hyperbolic manifold. As we stated in the Introduction, locally convex sets in $M$ are in 1-to-1 correspondence with $\Gamma$-precisely invariant convex sets in $\HH^{n}$ by the projection map $\textup{Proj}: \HH^{n}\rightarrow M$. In particular, we sometimes identify local convex sets with one of their lifts which are $\Gamma$-precisely invariant, and we don't consider immersed locally convex sets, e.g. nonprimitive closed geodesics. For simplicity, we will omit the word \emph{locally} and plainly denote the sets as convex.

We say that a convex set $D$ in $M$ is \emph{well-positioned} if $\partial \tilde{D}$ is smooth, where $\tilde{D}$ denotes the lift of $D$ to $\mathbb{H}^3$, and $\sigma^{\pm}_{\partial D}$ has compact support.

\begin{example}
Suppose that $M$ is a geometrically finite hyperbolic manifold. Embedded geodesic balls and the thin part of $M$ are well positioned convex sets. 
\end{example}

\begin{proof}
Geodesic balls with radii smaller than the injectivity radius of the center and Margulis tubes are compact convex subsets, so  they are well-positioned. The lifts of a cusp neighbourhood $D$ in $M$ are horoballs whose boundaries are smooth. Since $M$ is geometrically finite, all parabolic fixed points are bounded. Hence, the intersection of $\partial D$ with the convex core is compact. Thus, $\sigma^{\pm}_{\partial D}$ has compact support and $D$ is well-positioned. 

\end{proof}

Suppose that $(M_{k}=\HH^{n}/ \Ga_{k})_{k\in\mathbb{N}}$ is a sequence of hyperbolic manifolds that converges strongly to a geometrically finite hyperbolic manifold $M=\HH^{n}/ \Ga$. We say that well-positioned convex sets $D_k\subset M_k$ \emph{strongly converge} to a well-positioned convex set $D\subset M$ if
\begin{enumerate}
    \item the boundary $\partial D_k$ converges strongly to $\partial D$, or equivalently, the lifts of $\varphi_k^{-1}(\partial D_k)$ converge smoothly in compact sets to lifts of  $\partial D$, where $\varphi_k:U_k\rightarrow M_k$ are the smooth maps in the definition of strong convergence of $(M_k)_{k\in \bN}$,
    \item $\bar{\pi}(supp(\sigma^{\pm}_{\partial D_{k}}))$ is contained in $\varphi_k\left(N_1(\bar{\pi}(supp(\sigma^{\pm}_{\partial D})))\right)$ for large $k$, where $\bar{\pi}: T^{1}M\rightarrow M$ and $N_1$ denotes the 1-neighborhood.
 \end{enumerate}

\begin{example}
\label{ex:convergentset}
Let $(M_{k}=\HH^{n}/ \Ga_k)_{k\in\mathbb{N}}$ be a sequence of hyperbolic manifolds that converges strongly to a geometrically finite hyperbolic manifold $M=\HH^{n}/ \Ga$. 
\begin{enumerate}
    \item Suppose that $(x_{k})_{k\in\mathbb{N}}$ is a sequence of points in $M_{k}$ that converges to $x\in M$. The geodesic balls around the $x_{k}$ with radius $r$  converge strongly to the geodesic ball of $x$ with the same radius, where $r$ is smaller than the injectivity radius of $x$. 
    \item Given $0<\epsilon<\epsilon(n, \kappa)$, the thin parts $M_{k}^{<\epsilon}$ converge  strongly to  the thin part $M^{<\epsilon}$. 
\end{enumerate}
\end{example}

\section{Convergence of small eigenvalues}
\label{sec:eigenvalues}

In this section, we study the convergence of small eigenvalues and prove the uniform spectral gap for strongly convergent sequences of geometrically finite $n$-manifolds of negatively pinched curvature $-\kappa^2\leq K \leq -1$.

\begin{proposition}\label{prop:coremass}
Let $M$ be a geometrically finite Riemannian $n$-manifold of pinched sectional curvature $-\kappa^2\leq K\leq -1$ and let $\epsilon=\epsilon(n, \kappa)>0$ be the Margulis constant. Given  $\mu<(n-1)^2/4$, there exists a sufficiently large constant  $r(\mu)=r>0$ and  some constant $\eta(\mu,r)=\eta>0$, so that  if $f\in H^1(M)$ with $R(f)\leq\mu$, then $ \int_{B_{2r}(C(M)^{>\epsilon})} |f|^2\geq \eta\int_M |f|^2$. Moreover, one can take $\eta\rightarrow \bigg(1-\frac{4\mu}{(n-1)^2}\bigg)$ as $r\rightarrow + \infty$.
\end{proposition}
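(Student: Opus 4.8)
The plan is to localize $f$ by a partition of unity subordinate to the thick--thin--funnel decomposition of $M$ and to feed into an IMS-type localization identity the spectral lower bounds that Lemma \ref{lemma:loxodromic} and Lemma \ref{lemma:expends} provide on the ``ends'' of $M$. Fix $r$ large (to be constrained in terms of $\mu$ only at the very end) and let $P_1,\dots,P_m$ be the parabolic subgroups giving the Margulis cusps $\mathcal T_\epsilon(P_j)\subset M$. I would choose smooth functions $\chi_0,\chi_1,\dots,\chi_m,\chi_\infty:M\to[0,1]$ with
\[
\chi_0^2+\sum_{j=1}^m\chi_j^2+\chi_\infty^2\equiv 1,\qquad |\nabla\chi_\alpha|\le B/r,
\]
where $B$ depends only on $M$, and with $\operatorname{supp}\chi_0\subseteq B_{2r}(C(M)^{>\epsilon})$, $\operatorname{supp}\chi_j\subseteq\mathcal T_\epsilon(P_j)$, and $\operatorname{supp}\chi_\infty\subseteq M\setminus B_r(C(M))$. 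Granting this, expanding $\sum_\alpha|\nabla(\chi_\alpha f)|^2$ and using $\sum_\alpha\chi_\alpha\nabla\chi_\alpha=\tfrac12\nabla\!\big(\sum_\alpha\chi_\alpha^2\big)=0$ gives the pointwise identity $|\nabla f|^2=\sum_\alpha|\nabla(\chi_\alpha f)|^2-\big(\sum_\alpha|\nabla\chi_\alpha|^2\big)|f|^2$, hence
\[
\mu\int_M|f|^2\ \ge\ \int_M|\nabla f|^2\ \ge\ \sum_{j=1}^m\int_M|\nabla(\chi_j f)|^2+\int_M|\nabla(\chi_\infty f)|^2-\frac{C_0}{r^2}\int_M|f|^2
\]
with $C_0=(m+2)B^2$.

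I would then bound the end terms from below. Since $f\in H^1(M)$ and $\chi_\infty$ vanishes on $B_r(C(M))$, the function $\chi_\infty f$ is approximable in $H^1$ by elements of $C^\infty_0(M\setminus B_r(C(M)))$, so Lemma \ref{lemma:expends} gives $\int|\nabla(\chi_\infty f)|^2\ge(\tanh r)^2\tfrac{(n-1)^2}{4}\int|\chi_\infty f|^2$. For the cusps, $\mathcal T_\epsilon(P_j)$ is precisely invariant with $\stab_\Gamma(\mathcal T_\epsilon(P_j))=P_j$, so the covering $X/P_j\to M$ restricts to an isometric embedding of $\mathcal T_\epsilon(P_j)$; transplanting $\chi_j f$ to $X/P_j$ and invoking Lemma \ref{lemma:loxodromic} for the elementary group $P_j$ gives $\int|\nabla(\chi_j f)|^2\ge\lambda_0(X/P_j)\int|\chi_j f|^2\ge\tfrac{(n-1)^2}{4}\int|\chi_j f|^2$. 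Writing $c_r=(\tanh r)^2\le1$ and using $\sum_j\chi_j^2+\chi_\infty^2=1-\chi_0^2$ together with $\operatorname{supp}\chi_0\subseteq B_{2r}(C(M)^{>\epsilon})$, the displayed inequality becomes
\[
\Big(\mu+\tfrac{C_0}{r^2}\Big)\int_M|f|^2\ \ge\ c_r\,\tfrac{(n-1)^2}{4}\Big(\int_M|f|^2-\int_{B_{2r}(C(M)^{>\epsilon})}|f|^2\Big).
\]
Because $\mu<(n-1)^2/4$ while $c_r\to1$ and $C_0/r^2\to0$, for $r\ge r(\mu)$ large the quantity $c_r\tfrac{(n-1)^2}{4}-\mu-\tfrac{C_0}{r^2}$ is positive, and rearranging yields the proposition with $\eta=\eta(\mu,r)=1-\dfrac{\mu+C_0/r^2}{(\tanh r)^2(n-1)^2/4}$, which is positive for large $r$ and tends to $1-\tfrac{4\mu}{(n-1)^2}$ as $r\to\infty$.

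The main obstacle is the construction of the partition of unity: one must check that for $r$ large the complement of $B_{2r}(C(M)^{>\epsilon})$ is covered by $\big(\bigcup_j\mathcal T_\epsilon(P_j)\big)\cup\big(M\setminus B_r(C(M))\big)$, with the transition regions of width comparable to $r$ so that the cutoffs can be taken with $|\nabla\chi_\alpha|\lesssim 1/r$. Here geometric finiteness is essential: $C(M)^{>\epsilon}$ is compact, there are only finitely many Margulis tubes, each within bounded distance of $C(M)^{>\epsilon}$, and finitely many Margulis cusps whose parabolic fixed points are bounded, so that the part of $C(M)$ lying at depth $\le\rho$ inside a cusp stays within distance $\rho+O(1)$ of $C(M)^{>\epsilon}$; consequently a point within distance $r$ of $C(M)$ but outside every $\mathcal T_\epsilon(P_j)$ lies within $2r$ of $C(M)^{>\epsilon}$ once $r$ exceeds the relevant additive constants, while the part of each Margulis cusp that is far from $C(M)$ is absorbed into the funnel region $M\setminus B_r(C(M))$. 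I would isolate this covering statement as a short preliminary lemma and then build the $\chi_\alpha$ as fixed functions of $d(\cdot,C(M))$ and of the Busemann depth coordinate inside each cusp; the bookkeeping with additive constants only forces $r(\mu)$ to be large, which is permitted.
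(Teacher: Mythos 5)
Your proof is correct and follows the same overall strategy as the paper's: localize $f$ by a quadratic partition of unity into a piece near $C(M)^{>\epsilon}$ and pieces supported in the deep thin ends and the funnel, feed the Rayleigh quotient lower bounds from Lemmas~\ref{lemma:loxodromic} and \ref{lemma:expends} into the resulting inequality, and rearrange to isolate the mass near the truncated core. Two points of execution differ. You invoke the IMS identity $|\nabla f|^2 = \sum_\alpha |\nabla(\chi_\alpha f)|^2 - \bigl(\sum_\alpha |\nabla\chi_\alpha|^2\bigr)|f|^2$, which cancels the cross terms exactly and leaves an $O(1/r^2)$ error; the paper instead expands each $R(f_i)$ separately and bounds the cross terms by Cauchy--Schwarz, producing an $O(\sqrt{\mu}/r)$ error. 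Both vanish as $r\to\infty$ and give the same limiting $\eta = 1 - 4\mu/(n-1)^2$. You also use one cutoff per Margulis cusp, whereas the paper keeps the deep thin part as a single piece $f_2 = u\beta f$ built from products of radial cutoffs of $d(\cdot,\partial C(M))$ and $d(\cdot,\partial M^{>\epsilon})$, which makes the quadratic partition automatic; your variant is equally valid but does require the covering lemma you sketch at the end, whose role is precisely to confirm that for $r$ large the (finitely many, bounded-depth) Margulis tubes are absorbed into $B_{2r}(C(M)^{>\epsilon})$ so that no cutoff for them is needed. One small detail to tighten: take $\operatorname{supp}\chi_\infty$ in the \emph{interior} of $M\setminus B_r(C(M))$, as the paper does for $\alpha$, so that the approximating functions used to invoke Lemma~\ref{lemma:expends} genuinely lie in $C^\infty_0$ of that open set.
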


\begin{proof}
Since $C^\infty_0(M)$ is dense in $H^1(M)$, we can assume without loss of generality that $f$ is compactly supported with $\int_M f^2 = 1$.

Observe that we can find $C^1$ functions $g,h:\mathbb{R}\rightarrow[0,1]$ so that
\begin{itemize}
    \item $g^2(x) + h^2(x)=1$ for any $x\in\mathbb{R}$, 
    \item $supp(g)\subseteq (-\infty,1],\, supp(h)\subseteq (0,+\infty]$.
\end{itemize}

Given a positive constant $r>0$, we define  $u:=u_r(x),\alpha:=\alpha_r(x)\in C^1_0(M)$ satisfying the following properties, by using scalings of $g,h$ along equidistant sets to $\partial C(M)$:

\begin{enumerate}
    \item $0\leq u,\alpha\leq 1$
    \item $supp(u) \subseteq B_{2r}(C(M))$
    \item $supp(\alpha) \subseteq int(B^c_r(C(M)))$
    \item $u^2+\alpha^2\equiv 1$
    \item $|\nabla u|,|\nabla\alpha|\leq \frac{C}{r}$ everywhere in $M$, for some constant $C$ independent of $r$ and $M$.
\end{enumerate}

Similarly, for the thick-thin decomposition of $M$ we define functions $v:=v_r(x),\beta:=\beta_r(x)\in C^1_0(M)$ along equidistant sets to $\partial M^{>\epsilon}$ satisfying the following properties
\begin{enumerate}
    \item $0\leq v,\beta\leq 1$
    \item $supp(v) \subseteq B_{2r}(M^{>\epsilon})$
    \item $supp(\beta) \subseteq int(B^c_r(M^{>\epsilon}))$
    \item $v^2+\beta^2\equiv 1$
    \item $|\nabla v|,|\nabla\beta|\leq \frac{C}{r}$ everywhere in $M$, for some constant $C$ independent of $r$ and $M$.
\end{enumerate}

Define then $f_1:=uvf,\,f_2:=u\beta f,\,f_3:=\alpha f$ which are in $C^1_0(M)$. By the definitions of $u,\alpha,v,\beta$ we have

\begin{enumerate}
    \item $supp(f_1)\subseteq B_{2r}(M^{>\epsilon}\cap C(M))$
    \item $supp(f_2)\subseteq B^c_{r}(M^{>\epsilon})$
    \item $supp(f_3)\subseteq B^c_{r}(C(M))$
    \item $f^2_1+f^2_2+f^2_3=f^2$.
\end{enumerate}

We can expand $R(f_1)$ as

\[R(f_1) = \bigg(\int_M f^2|\nabla (uv)|^2+2uvf\langle\nabla (uv), \nabla f \rangle + u^2v^2|\nabla f|^2\bigg)\bigg/\bigg(\int_M u^2v^2f^2\bigg).
\]

Since $\nabla(uv) = u\nabla v + v\nabla u$ then it follows that $|\nabla(uv)|\leq \frac{2C}{r}$, and subsequently

\[ \int_M f^2|\nabla (uv)|^2 \leq \frac{4C^2}{r^2}.
\]

By Cauchy-Schwarz, we also have that

\[ \bigg|\int_M 2uvf\langle\nabla (uv), \nabla f \rangle\bigg| \leq 2\int_M |\langle f\nabla(uv),\nabla f \rangle| \leq \frac{4C}{r}\sqrt{R(f)}.
\]

Collecting these inequalities and defining $a:=\int_M f^2_1$ for convenience, we arrive to

\begin{equation}\label{eq:Rf1bound}
    R(f_1) \leq \bigg( \frac{4C^2}{r^2} + \frac{4C}{r}\sqrt{R(f)} + \int_M u^2v^2|\nabla f|^2\bigg)\bigg/a.
\end{equation}

Similarly, define $b:=\int_M f^2_2,\,c:=\int_M f^2_3$. Then

\begin{equation}\label{eq:Rf2bound}
    R(f_2) \leq \bigg( \frac{4C^2}{r^2} + \frac{4C}{r}\sqrt{R(f)} + \int_M u^2\beta^2|\nabla f|^2\bigg)\bigg/b,
\end{equation}

\begin{equation}\label{eq:Rf3bound}
    R(f_3) \leq \bigg( \frac{4C^2}{r^2} + \frac{4C}{r}\sqrt{R(f)} + \int_M \alpha^2|\nabla f|^2\bigg)\bigg/c.
\end{equation}

By doing $a(\ref{eq:Rf1bound})+b(\ref{eq:Rf2bound})+c(\ref{eq:Rf3bound})$ we obtain

\begin{equation}
\begin{aligned}
    aR(f_1)+bR(f_2)+cR(f_3) &\leq \bigg( \frac{12C^2}{r^2} + \frac{12C}{r}\sqrt{R(f)} + \int_M (u^2v^2 + u^2\beta^2 + \alpha^2)|\nabla f|^2\bigg)\\
    &= \bigg( \frac{12C^2}{r^2} + \frac{12C}{r}\sqrt{R(f)} + \int_M |\nabla f|^2\bigg)\\
    &\leq\bigg( \frac{12C^2}{r^2} + \frac{12C}{r}\sqrt{\mu} + \mu \bigg).
\end{aligned}
\end{equation}
   
Since $supp(f_2)\subseteq int(B^c_{r}(M^{>\epsilon})),\,supp(f_3)\subseteq int(B^c_{r}(C(M)))$ we have by Lemmas \ref{lemma:loxodromic} and  \ref{lemma:expends} (or more precisely, by applying a combination of  the Lemmas on each component of $M$) that $R(f_2)\geq (\tanh r)^2(n-1)^2/4,\,R(f_3)\geq  (\tanh r)^2(n-1)^2/4$. Using these bounds together with the obvious bound $aR(f_1)\geq 0$ we arrive to

\[(b+c)\frac{(\tanh r)^2(n-1)^2}{4} \leq  \frac{12C^2}{r^2} + \frac{12C}{r}\sqrt{\mu} + \mu,
\]

\[(b+c) \leq \frac{4}{(\tanh r)^2(n-1)^2}\bigg( \frac{12C^2}{r^2} + \frac{12C}{r}\sqrt{\mu} + \mu\bigg).
\]

By the fact  that $a+b+c=1$ we obtain

\[a \geq \frac{4}{(\tanh r)^2(n-1)^2}\bigg(\frac{(\tanh r)^2(n-1)^2}{4} - \frac{12C^2}{r^2} - \frac{12C}{r}\sqrt{\mu} - \mu\bigg).
\]

The result follows from observing that for the left-hand side we have $\int_{B_{2r}(M^{>\epsilon}\cap C(M))} f^2 \geq a$, whereas the right hand side depends only on $\mu,r$ and converges to $1-\frac{4\mu}{(n-1)^2}>0$ as $r\rightarrow+\infty$.

\end{proof}

Now we use Proposition \ref{prop:coremass} to take limits of eigenfunctions with small eigenvalues along a strongly convergent sequence of manifolds with negatively pinched curvature.

\begin{lemma}\label{lem:eigensubsequence}
Suppose that $(M_k)_{k\in\mathbb{N}}$ is a sequence of  $n$-manifolds of pinched curvature $-\kappa^{2}\leq K\leq -1$ that converges strongly to a (possibly disconnected) geometrically finite $n$-manifold $M$.  Let $\mu<(n-1)^2/4$ and for each $M_k$, let $f_k$ be an eigenfunction of the negative Laplacian so that $R(f_k)\leq \mu$ and  $\int_{M_k}|f_k|^2 = 1$. Then, after possibly taking a subsequence, we have that $f_k$ converges strongly to $f$, a non-zero eigenfunction of the negative  Laplacian in $M$ with $R(f)\leq \mu$.
\end{lemma}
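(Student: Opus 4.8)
The plan is to run a standard elliptic-compactness argument on the strongly convergent sequence, using Proposition \ref{prop:coremass} to prevent the $L^2$-mass of the eigenfunctions from escaping to infinity, which is what will guarantee that the limit is nonzero.

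First I would normalize notation: set $\lambda_k := R(f_k)\in[0,\mu]$, so that $-\Delta f_k = \lambda_k f_k$ and $\int_{M_k}|\nabla f_k|^2 = \lambda_k$, and after passing to a subsequence assume $\lambda_k\to\lambda\in[0,\mu]$. Writing $M=\bigcup_{i=1}^m N_i$ and keeping the basepoints $p_i\in N_i$ and maps $\varphi_{k,i}:U_{k,i}\to M_k$ from the definition of strong convergence, I would pull back: $\tilde f_{k,i} := f_k\circ\varphi_{k,i}$ solves $-\Delta_{g_{k,i}}\tilde f_{k,i} = \lambda_k\tilde f_{k,i}$ for the metric $g_{k,i} := \varphi_{k,i}^*g_k$, which converges in $C^\infty_{loc}(N_i)$ to $g$, while $\|\tilde f_{k,i}\|_{L^2(U_{k,i},g_{k,i})}\le\|f_k\|_{L^2(M_k)}=1$. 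On a relatively compact $K\subset N_i$ these equations are uniformly elliptic with $C^\infty$-convergent coefficients and right-hand side bounded by $\mu|\tilde f_{k,i}|$, so interior elliptic estimates plus bootstrapping yield uniform $C^j(K)$ bounds on $\tilde f_{k,i}$. Then Arzel\`a--Ascoli and a diagonal argument over exhaustions of the $N_i$ give, along a further subsequence, $\tilde f_{k,i}\to f_i$ in $C^\infty_{loc}(N_i)$ with $-\Delta_g f_i = \lambda f_i$; I set $f|_{N_i}:=f_i$. Since for any fixed compact $K_i\subset N_i$ the images $\varphi_{k,i}(K_i)$ are pairwise disjoint for large $k$ (because $d(p_{k,i},p_{k,j})\to\infty$), one gets $\sum_i\int_{K_i}|\tilde f_{k,i}|^2\,d\mathrm{vol}_{g_{k,i}}=\sum_i\int_{\varphi_{k,i}(K_i)}|f_k|^2\le 1$, and letting $k\to\infty$ and then enlarging the $K_i$ shows $f\in L^2(M)$ with $\|f\|_{L^2}\le 1$.

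The step that actually uses the hypotheses is showing $f\neq 0$. I would fix $r=r(\mu)$ with $\eta_0:=\eta(\mu,r)>0$ as in Proposition \ref{prop:coremass}, and apply that proposition to each $M_k$ (geometrically finite for large $k$) to get $\int_{B_{2r}(C(M_k)^{>\epsilon})}|f_k|^2\ge\eta_0$. Using item (2) of the definition of strong convergence together with the $C^\infty_{loc}$ convergence of metrics, I would check that for large $k$ the set $B_{2r}(C(M_k)^{>\epsilon})$ is contained in $\bigcup_i\varphi_{k,i}(K_i)$ with $K_i := \overline{B_{2r+2}(C(N_i)^{>\epsilon})}$ a fixed compact set; then $\eta_0\le\sum_i\int_{K_i}|\tilde f_{k,i}|^2\,d\mathrm{vol}_{g_{k,i}}$, and passing to the limit (finitely many terms, each converging by $C^0_{loc}$-convergence of $\tilde f_{k,i}$ and smooth convergence of metrics) gives $\sum_i\int_{K_i}|f_i|^2\ge\eta_0>0$, so some $f_i$, hence $f$, is nonzero. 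Finally, to see $f\in H^1(M)$ with $R(f)\le\mu$, I would cut off $f_i$ on each (complete) component by $\chi_R$ with $\chi_R\equiv 1$ on $B_R(p_i)$, $\mathrm{supp}\,\chi_R\subset B_{2R}(p_i)$, $|\nabla\chi_R|\le C/R$; integrating by parts in $\int_{N_i}\chi_R^2|\nabla f_i|^2 = \lambda\int_{N_i}\chi_R^2 f_i^2 - 2\int_{N_i}\chi_R f_i\langle\nabla\chi_R,\nabla f_i\rangle$ and using Cauchy--Schwarz on the last term bounds $\int_{N_i}\chi_R^2|\nabla f_i|^2$ uniformly in $R$, and letting $R\to\infty$ gives $\int_{N_i}|\nabla f_i|^2\le\lambda\int_{N_i}|f_i|^2$; summing over $i$ yields $\int_M|\nabla f|^2\le\lambda\int_M|f|^2\le\mu<\infty$, hence $R(f)\le\lambda\le\mu$. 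Since each $f_i$ is either $0$ or an eigenfunction of eigenvalue $\lambda$, $f$ is (per the convention for disconnected limits) a nonzero eigenfunction with $R(f)\le\mu$, and $f_k\circ\varphi_{k,i}\to f|_{N_i}$ in $C^\infty_{loc}$ is exactly the assertion that $f_k\to f$ strongly.

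The main obstacle is the nonvanishing step: a priori the mass of $f_k$ could drift into deeper and deeper parts of the thin (noncompact) region and disappear in the geometric limit, and Proposition \ref{prop:coremass} --- which pins a definite fraction of the mass of any function with small Rayleigh quotient inside a fixed compact neighborhood of the truncated core --- is exactly what rules this out; this is the point where geometric finiteness of the limit and $\mu<(n-1)^2/4$ enter. Everything else (interior elliptic estimates, Arzel\`a--Ascoli, the diagonal argument, and the completeness-based integration by parts) is routine, modulo bookkeeping over the finitely many components $N_i$.
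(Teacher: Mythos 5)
Your proposal is correct and follows essentially the same route as the paper's proof: Proposition~\ref{prop:coremass} pins a definite $L^2$-mass near the truncated core so the limit is nonzero, interior elliptic estimates plus compactness give local convergence, and a diagonal argument over an exhaustion produces the limit eigenfunction. The one place where you go beyond the paper's terse write-up is the cutoff/integration-by-parts (Caccioppoli-type) step showing $f\in H^1(M)$ with $\int_M|\nabla f|^2\le\lambda\int_M|f|^2$; the paper simply asserts $R(f)\le\mu$ after the Rellich--Kondrachov step, and your argument is exactly what is needed to justify that $R(f)$ is defined and equals the limiting eigenvalue.
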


\begin{proof}
By Proposition \ref{prop:coremass} there exist $r>0$ and $\eta>0$ independent of $k$ so that $\int_{B_{2r}(C(M_k)^{>\epsilon})} |f_k|^2\geq \eta$. By elliptic regularity and strong convergence, we have that the Sobolev norms

\[\Vert f_k \Vert_{W^{2,\ell}(B_{2r}(C(M_k)^{>\epsilon}))}
\]
are uniformly bounded for any given $\ell$. By the Rellich-Kondrachov compactness theorem, we can take a convergent subsequence with limit $f$ in $B_{2r}(C(M)^{>\epsilon})$ in any $W^{2,\ell}$ norm. Taking $r\rightarrow+\infty$ and doing a Cantor diagonal argument, we have that $R(f) \leq \mu,\, -\Delta_M f = R(f)f,\, \int_{M} |f|^2\geq \eta$, which concludes the Lemma.
\end{proof}

Recall that $Spec_{\mu}(M)$ denotes  the collection of eigenvalues of the negative  Laplacian  on the negatively pinched manifold $M$ which are smaller than $\mu$, where for convenience we assume that $\mu<(n-1)^2/4$ is not an eigenvalue of $M$ (this is possible for all $\mu<(n-1)^2$/4 with the exception of finitely many values). Suppose that $(M_{k})_{k\in\mathbb{N}}$ is a sequence of negatively pinched manifolds  which converges strongly to a geometrically finite $n$-manifold $M$. Given any small eigenvalue $\lambda\in Spec_\mu(M)$, we can use the discreteness of small eigenvalues to take $\epsilon>0$ small enough so that $(\lambda-\epsilon,\lambda+\epsilon)\cap Spec_\mu(M) = \lbrace \lambda \rbrace$. We have then that $(\lambda-\epsilon,\lambda+\epsilon)\cap Spec_\mu(M_k)$ is either empty or accumulates to $\lambda$ as $k\rightarrow\infty$, where we desire to prove the later case. Let then $m_\lambda$ be the multiplicity of $\lambda$ and $m_{\lambda,k}$ be the cardinality of $(\lambda-\epsilon,\lambda+\epsilon)\cap Spec_\mu(M_k)$ (counting multiplicities). We say $Spec_{\mu}(M_k)$ \emph{converges} to $Spec_{\mu}(M)$, if $\lim_{k\rightarrow \infty} m_{\lambda,k} = m_\lambda$  for any small eigenvalue $\lambda\in Spec_{\mu}(M)$.

\begin{theorem}\label{thm:spectralconvergence}
Suppose that $(M_k)_{k\in\mathbb{N}}$ is a sequence of  $n$-manifolds of pinched curvature $-\kappa^{2}\leq K\leq -1$ that converges strongly to a (possibly disconnected) geometrically finite $n$-manifold $M$. Then for any given $\mu<(n-1)^2/4$ not in $Spec(M)$ we have that $Spec_\mu(M_k)$ converges (counting multiplicities) to $Spec_\mu(M)$.
\end{theorem}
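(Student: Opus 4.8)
The plan is to prove convergence in both directions: no spurious small eigenvalues appear in the limit beyond those of $M$, and every small eigenvalue of $M$ is approximated by small eigenvalues of the $M_k$. Fix $\mu < (n-1)^2/4$ not in $Spec(M)$, and pick a small eigenvalue $\lambda \in Spec_\mu(M)$ with an isolating interval $(\lambda-\epsilon,\lambda+\epsilon)$ containing no other point of $Spec_\mu(M)$; set $m_\lambda = \dim$ of the $\lambda$-eigenspace. We want $m_{\lambda,k} \to m_\lambda$.

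First I would prove the inequality $\limsup_k m_{\lambda,k} \leq m_\lambda$, i.e. the $M_k$ cannot have more eigenvalues clustering near $\lambda$ than $M$ does. Suppose for contradiction that along a subsequence $m_{\lambda,k} \geq m_\lambda + 1$. Pick $L^2$-orthonormal eigenfunctions $f_k^{(1)},\dots,f_k^{(m_\lambda+1)}$ on $M_k$ with eigenvalues in $(\lambda-\epsilon,\lambda+\epsilon) \subset (0,\mu]$, hence with Rayleigh quotients $\leq \mu$. By Lemma \ref{lem:eigensubsequence} (applied and combined with a Cantor diagonal argument over the finitely many functions), after passing to a further subsequence each $f_k^{(j)}$ converges strongly to a nonzero eigenfunction $f^{(j)}$ on $M$ with $R(f^{(j)}) \leq \mu$; since $R$ is continuous under strong convergence on the regions where the mass concentrates (Proposition \ref{prop:coremass} guarantees a uniform fraction $\eta$ of the mass sits in $B_{2r}(C(M_k)^{>\epsilon})$, which is where elliptic estimates give $W^{2,\ell}$ convergence), the eigenvalues $R(f_k^{(j)})$ converge, and the common limit lies in $[\lambda-\epsilon,\lambda+\epsilon] \cap Spec_\mu(M) = \{\lambda\}$. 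Thus each $f^{(j)}$ is a $\lambda$-eigenfunction. The key remaining point is that orthonormality passes to the limit: $\int_M f^{(i)}f^{(j)} = \lim_k \int_{M_k} f_k^{(i)}f_k^{(j)} = \delta_{ij}$. This follows from the uniform mass concentration in Proposition \ref{prop:coremass} (so that no mass escapes to infinity, and the $L^2$ inner products are genuinely captured in the exhausting compact sets where convergence is smooth), together with the uniform $W^{2,\ell}$ bounds to upgrade weak to strong $L^2$ convergence on those sets. Hence we obtain $m_\lambda+1$ orthonormal vectors in the $\lambda$-eigenspace, a contradiction.

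Next, for $\liminf_k m_{\lambda,k} \geq m_\lambda$, I would argue variationally. Let $g^{(1)},\dots,g^{(m_\lambda)}$ be orthonormal $\lambda$-eigenfunctions on $M$; by Proposition \ref{prop:coremass} each has a definite fraction of its $L^2$-mass in a compact core region, so they can be approximated in $H^1(M)$ by compactly supported functions, which via the maps $\varphi_{k,i}$ transplant to $M_k$ as $m_\lambda$ nearly-orthonormal test functions $g_k^{(1)},\dots,g_k^{(m_\lambda)}$ with Rayleigh quotients tending to $\lambda < \mu$. Applying the min-max characterization of eigenvalues to the $m_\lambda$-dimensional span of these test functions shows $M_k$ has at least $m_\lambda$ eigenvalues (counted with multiplicity) below $\lambda + o(1)$, and combined with the upper bound argument already established (which forbids accumulation to any point of $[0,\mu]\setminus\{\lambda\}$ in the window just below $\lambda+\epsilon$) these must actually lie in $(\lambda-\epsilon,\lambda+\epsilon)$ for large $k$. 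Here I should be slightly careful that min-max for the not-necessarily-discrete part of the spectrum still detects honest eigenvalues: since $\lambda + o(1) < (n-1)^2/4$, everything below that threshold is discrete spectrum by the Lax–Phillips theory (finitely many small eigenvalues, cf. \cite{LaxPhillips, Ursula04}), so the count is legitimate. Summing over all $\lambda \in Spec_\mu(M)$ and over the finitely many components $N_i$ then gives the stated convergence counting multiplicities.

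The main obstacle I anticipate is controlling loss of mass at infinity when passing to the limit — precisely the step where orthonormality of the limiting eigenfunctions must be verified. Strong (geometric) convergence only gives smooth control on an exhausting family of compact sets, so a priori $L^2$-mass of $f_k$ could leak into the ends (cusps, or the parts of the $M_k$ that are "pinching off"). Proposition \ref{prop:coremass} is exactly what defeats this: it provides a uniform lower bound $\eta$, independent of $k$, on the fraction of mass in a fixed-size neighborhood of the truncated core, and letting $r \to +\infty$ with $\eta \to 1 - 4\mu/(n-1)^2$ one recovers \emph{all} of the mass in the limit. Making this quantitative enough to conclude $\int_M f^{(i)}f^{(j)} = \delta_{ij}$ — rather than just $\leq \delta_{ij}$ — is the delicate part, and it is where I would spend the most care, using that $R(f_k) \leq \mu$ uniformly so that Proposition \ref{prop:coremass} applies with the same $r,\eta$ to every $f_k^{(j)}$ simultaneously.
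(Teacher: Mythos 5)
Your high-level strategy tracks the paper's — both directions, driven by Lemma \ref{lem:eigensubsequence} and Proposition \ref{prop:coremass}, with a variational transplantation via the maps $\varphi_{k,i}$ for the $\liminf$ side — and that side of your argument is fine in spirit (the paper projects the transplanted function perpendicular to $V_k$ rather than invoking min-max explicitly, but both are valid). The gap is in the $\limsup$ direction. You assert that orthonormality passes to the limit, i.e. $\int_M f^{(i)}f^{(j)} = \lim_k\int_{M_k}f_k^{(i)}f_k^{(j)} = \delta_{ij}$, on the grounds that Proposition \ref{prop:coremass} prevents mass from escaping to infinity. It does not. The proposition provides a mass fraction $\eta$ captured in $B_{2r}(C(M_k)^{>\epsilon})$ that tends only to $1 - 4\mu/(n-1)^2$ as $r\to\infty$, which is \emph{strictly less than} $1$ whenever $\mu>0$. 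A definite fraction of $L^2$ mass is thus permitted to leak into the ends, so the compact-set limits can fail to be normalized (and in general their cross inner products need not vanish). Concretely, if $f_k^{(j)} = \alpha_j g_j + \beta_j \phi_k^{(j)}$ with fixed orthonormal $g_j$ and pairwise orthogonal sequences $\phi_k^{(j)}$ escaping to infinity, the limits $\alpha_j g_j$ are linearly independent yet have norm $|\alpha_j| < 1$; nothing in Proposition \ref{prop:coremass} forces $\beta_j = 0$. So the step ``orthonormality passes to the limit'' would fail.

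What is actually needed — and what the paper proves — is only \emph{linear independence} of the limits, which is weaker and does follow from Proposition \ref{prop:coremass}. Suppose $\sum_j \alpha_j f^{(j)} \equiv 0$ with some $\alpha_j\neq 0$, and set $g_k = \sum_j \alpha_j f_k^{(j)}$. Orthonormality in $M_k$ gives $\Vert g_k\Vert_{L^2(M_k)} = \bigl(\sum_j \alpha_j^2\bigr)^{1/2} \neq 0$, and since the $f_k^{(j)}$ are eigenfunctions with eigenvalues $\leq\mu$ one has $R(g_k)\leq\mu$. After normalizing $g_k$ to unit $L^2$ norm, Proposition \ref{prop:coremass} forces at least an $\eta$-fraction of its mass into a fixed compact neighborhood of the core, so its compact-set limit cannot be identically zero — contradicting $\sum_j\alpha_j f^{(j)}\equiv 0$. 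Linear independence of $f^{(1)},\dots,f^{(l)}$ in the $\lambda$-eigenspace of $M$ is all the dimension count requires, so your proposal can be repaired simply by replacing the orthonormality claim with this argument.
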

\begin{proof}
To prove the theorem, we will show the convergence of eigenspaces. Namely, let $V_k, V$ denote the linear spaces of functions generated by the eigenfunctions with eigenvalues in $Spec_\mu(M_k)$ and $ Spec_\mu(M)$, which   have a natural orthogonal decomposition by the eigenspaces of $Spec_\mu(M_k)$ and $Spec_\mu(M)$. We  show that $V_k\rightarrow V$, in the following  sense:
\begin{enumerate}
\item Any function $f\in V$ can be obtained as the limit of a strongly convergent sequence $(f_k\in V_k)_{k\in\mathbb{N}}$.
\item Any sequence of families $( f_{l,k} \subset V_k)_{k\in\mathbb{N}}$ of orthonormal functions in $M_k$ converges strongly (after possibly taking a subsequence) to a linearly independent family of functions in $M$.

\end{enumerate}

Item (1) implies that $\liminf_{k\rightarrow \infty} m_{\lambda, k}\geq m_\lambda$, and Item (2) implies that $\limsup_{k\rightarrow \infty} m_{\lambda, k}\leq m_\lambda$. Thus, it suffices to prove the convergence of eigenspaces.  We first show item (2). Suppose that $f_{1,k},\ldots f_{l,k}$ are orthonormal eigenfunctions of $M_k$. By Lemma \ref{lem:eigensubsequence} we can assume they converge in compact sets to $f_1,\ldots,f_l$. If  the functions $f_1,\ldots, f_l$ are not linearly independent in $L^2(M)$, there exist real numbers $\alpha_1,\ldots,\alpha_l$ not all vanishing so that $\alpha_1f_1+\ldots+\alpha_lf_l\equiv 0$. Hence, $g_k = \alpha_1f_{1,k}+\ldots+\alpha_lf_{l, k}$ are functions in $H^1(M_k)$ with norm $\sqrt{\alpha_1^2+\ldots + \alpha_k^2}\neq0$. We can normalize $\Vert g_k\Vert_{L^2(M)}=1$ so that   $R(g_k)\leq \mu$, and since the limit of $g_k$ in compact sets is not identically zero from Proposition \ref{prop:coremass}, we have a contradiction.
    
    Now we prove Item (1).  Assume  that not all functions in $V$ are obtained as limits of functions in $V_k$. Let $V'$ be the proper maximal space in $V$, consisting of functions that can be obtained as limits. Assume that there exists an eigenfunction $f$ of $M$ with eigenvalue $\lambda$, such that $f$ is orthogonal to $V'$. Approximate $f$ in $H^1(M)$ by a compactly supported function $f_0$, which is normalized so that $\int_M|f_0|^2=1$ and $R(f_0)$ is close to $\lambda$. It follows  that $\int_M f_0g =\int_M (f_0-f)g$ can be taken uniformly small for all $g\in V'$ with $\int_M |g|^2=1$. Let $f^{k}_{0}$ be the pullback of $f_{0}$ in $M_k$ by the maps $\varphi_{k,i}$ from the definition of strong convergence. Then for sufficiently large $k$ we have that (after identifying the compact cores) $\int_{M_k} f_{0}^{k}g_k$ can be also taken uniformly small for any $g_k\in V_k$ with $\int_{M_k}|g_k|^2=1$ by Proposition \ref{prop:coremass}. For large $k$ we also have that in $M_k$ the Rayleigh quotient $R(f^{k}_{0})$ is close to $\lambda$. Denote then by $f_{0,k}$ the projection of $f_{0}^{k}$ perpendicular to $V_k$. Then $R(f_{0, k})$ is also very close to $\lambda$ for sufficiently large $k$. Hence, this contributes to an eigenfunction in $M_k$ which does not belong to $V_k$. However, by construction, $V_k$ is the linear space of functions generated by eigenfunctions with eigenvalues in $Spec_{\mu}(M_k)$, which gives a contradiction. Therefore, any function $f\in V$ can be obtained as the limit of a strongly convergent sequence $(f_k\in V_k)$.

\end{proof}

Recall that the Lax-Phillips spectral gap $s_{1}=\min\{\lambda_1(M), (n-1)^{2}/4\}-\lambda_{0}(M)$ for a hyperbolic manifold $M=\HH^{n}/ \Ga$. We obtain the following convergence result of spectral gap for strongly convergent sequences of hyperbolic manifolds.

\begin{theorem}\label{thm:Lax-Phillipsconvergence}
Suppose that $(M_k=\Isom(\HH^{n})/ \Ga_k)_{k\in\mathbb{N}}$ is a sequence of hyperbolic manifolds which converges strongly to a geometrically finite hyperbolic manifold $M=\HH^{n}/ \Ga$. Then the sequence of Lax-Phillips spectral gaps $s_1(M_k)$ converges to $s_1(M)$. 
\end{theorem}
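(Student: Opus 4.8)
The plan is to deduce $s_1(M_k)\to s_1(M)$ from the convergence of small eigenvalues established in Theorem \ref{thm:spectralconvergence}, after rewriting the Lax--Phillips gap in a form that is manifestly stable under that convergence. For a geometrically finite hyperbolic $n$-manifold $N$ the essential spectrum is $[(n-1)^2/4,\infty)$ (Lax--Phillips), so $\lambda_0(N)=\min Spec(N)\le (n-1)^2/4$, there are only finitely many eigenvalues below $(n-1)^2/4$, every eigenvalue below $(n-1)^2/4$ lies below the essential spectrum (hence is genuinely a discrete eigenvalue), and $\lambda_0(N)$ is simple whenever $\lambda_0(N)<(n-1)^2/4$ (positive ground state). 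Consequently $s_1(N)$ is the difference of the two smallest elements of the finite multiset formed by the eigenvalues of $N$ strictly below $(n-1)^2/4$ together with one extra copy of $(n-1)^2/4$. Since a strong limit of geometrically finite manifolds has geometrically finite approximates (Section \ref{sec:convergenceconvex}), this description also applies to $M_k$ for $k$ large; note that Theorem \ref{thm:spectralconvergence} has no hypothesis on $\delta(\Ga)$, so no information about convergence of critical exponents is required.

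First I would fix $\mu<(n-1)^2/4$ with $\mu\notin Spec(M)$ (only finitely many values are excluded) and combine Theorem \ref{thm:spectralconvergence} with the compactness in Lemma \ref{lem:eigensubsequence} used in its proof: not only is every eigenvalue of $M$ below $\mu$ approximated with the correct multiplicity by eigenvalues of $M_k$, but also $|Spec_\mu(M_k)|\to|Spec_\mu(M)|$, and for $k$ large every eigenvalue of $M_k$ below $\mu$ lies within $\epsilon$ of an eigenvalue of $M$ (there is no escape below $\mu$; a drift upward toward $(n-1)^2/4$ would be harmless). This already gives $\lambda_0(M_k)\to\lambda_0(M)$: if $\lambda_0(M)<(n-1)^2/4$, choose $\mu>\lambda_0(M)$, so that $Spec_\mu(M_k)\ne\emptyset$ for $k$ large, its minimum is $\lambda_0(M_k)$, and by simplicity of $\lambda_0(M)$ this minimum is the eigenvalue of $M_k$ approximating $\min Spec_\mu(M)=\lambda_0(M)$; if $\lambda_0(M)=(n-1)^2/4$ then $Spec_\mu(M)=\emptyset$, hence $Spec_\mu(M_k)=\emptyset$ and $\lambda_0(M_k)>\mu$ for $k$ large, and letting $\mu\uparrow(n-1)^2/4$ (using $\lambda_0(M_k)\le(n-1)^2/4$) gives the claim.

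For $\min\{\lambda_1(M_k),(n-1)^2/4\}$ I would separate two cases. If $M$ has at least two eigenvalues below $(n-1)^2/4$ counted with multiplicity, then $\min\{\lambda_1(M),(n-1)^2/4\}=\lambda_1(M)<(n-1)^2/4$; picking $\mu\in(\lambda_1(M),(n-1)^2/4)\setminus Spec(M)$, the concentration statement shows that for $k$ large $M_k$ has exactly one eigenvalue near $\lambda_0(M)$ and at least one near $\lambda_1(M)$, so its second smallest eigenvalue $\lambda_1(M_k)$ tends to $\lambda_1(M)$, giving $\min\{\lambda_1(M_k),(n-1)^2/4\}=\lambda_1(M_k)\to\lambda_1(M)$. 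Otherwise $M$ has at most one eigenvalue below $(n-1)^2/4$ and $\min\{\lambda_1(M),(n-1)^2/4\}=(n-1)^2/4$; for every $\mu<(n-1)^2/4$ outside $Spec(M)$ we have $|Spec_\mu(M_k)|\le 1$ for $k$ large, so $\lambda_1(M_k)>\mu$, and letting $\mu\uparrow(n-1)^2/4$ gives $\liminf_k\min\{\lambda_1(M_k),(n-1)^2/4\}\ge(n-1)^2/4$, hence equality since this quantity never exceeds $(n-1)^2/4$. In both cases $s_1(M_k)=\min\{\lambda_1(M_k),(n-1)^2/4\}-\lambda_0(M_k)$ converges to $\min\{\lambda_1(M),(n-1)^2/4\}-\lambda_0(M)=s_1(M)$.

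The crux is the upgrade from the multiplicity-wise convergence in Theorem \ref{thm:spectralconvergence} to full control of the whole multiset of small eigenvalues of $M_k$ --- ruling out spurious eigenvalues and pinning down the total count --- which is precisely what the uniform mass bound of Proposition \ref{prop:coremass} and the Rellich--Kondrachov compactness in Lemma \ref{lem:eigensubsequence} provide. The only other delicate point is the borderline case $\lambda_0(M)=(n-1)^2/4$ (equivalently $\delta(\Ga)\le(n-1)/2$ by Theorem \ref{Sullivan3}), where no threshold lies strictly between $\lambda_0(M)$ and $(n-1)^2/4$ and one argues through emptiness of $Spec_\mu(M)$ rather than through a nearby eigenvalue.
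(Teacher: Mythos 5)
Your proof is correct, and the overall strategy---reducing to the convergence of small eigenvalues from Theorem \ref{thm:spectralconvergence} and treating the threshold $(n-1)^2/4$ by cases---matches the paper. The one substantive difference is in how you obtain $\lambda_0(M_k)\to\lambda_0(M)$: the paper cites McMullen's result on convergence of critical exponents together with Sullivan's formula (Theorem \ref{Sullivan3}) relating $\lambda_0$ to $\delta(\Gamma)$, whereas you extract it directly from Theorem \ref{thm:spectralconvergence} using the same no-spurious-eigenvalues/no-escape argument you apply to $\lambda_1$. Your route is more self-contained: it stays entirely inside the spectral machinery of Section \ref{sec:eigenvalues}, avoids the $\delta$--$\lambda_0$ dictionary specific to $\HH^n$, and handles the borderline $\lambda_0(M)=(n-1)^2/4$ case by the same emptiness-of-$Spec_\mu$ device you use for $\lambda_1$. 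You are also right to flag the ``upgrade'': the statement of Theorem \ref{thm:spectralconvergence} literally only controls eigenvalue clusters near each $\lambda\in Spec_\mu(M)$, while you need convergence of the full multiset $Spec_\mu(M_k)$ (in particular $|Spec_\mu(M_k)|\to|Spec_\mu(M)|$ and that no eigenvalue of $M_k$ below $\mu$ drifts away from $Spec(M)$). That stronger conclusion is indeed what items (1) and (2) in the proof of Theorem \ref{thm:spectralconvergence} establish, via Lemma \ref{lem:eigensubsequence} and Proposition \ref{prop:coremass}, and making it explicit is a genuine clarification that the paper leaves implicit.
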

\begin{proof}
By \cite[Theorem 1.5]{McMullen99} and Theorem \ref{Sullivan3} we have that $\lim_{k\rightarrow\infty} \lambda_0(M_k) = \lambda_0(M)$. By Theorem \ref{thm:spectralconvergence}, if $\lambda_{1}(M)\geq(n-1)^{2}/4$, then $\liminf\lambda_{1}(M_k)\geq(n-1)^{2}/4$ for sufficiently large $k$, or if $\lambda_{1}(M)<(n-1)^{2}/4$, we have that $\lim_{k\rightarrow\infty} \lambda_1(M_k) = \lambda_1(M)>\lambda_0(M)$ for sufficiently large $k$. In either case, the convergence of $s_1(M_k)$ to $s_1(M)$ follows.
\end{proof}
\medskip
\noindent
{\bf Proof of Theorem \ref{thm:uniformgap}: }
The proof follows  from Theorem \ref{thm:spectralconvergence} and Theorem \ref{thm:Lax-Phillipsconvergence}.

\section{Uniform convergence of measures}
\label{sec:measure}
In this section, we prove convergence for skinning measures and the Bowen-Margulis measure under strong convergence.  We assume that $M$ is a hyperbolic $n$-manifold, and by $M^{<\epsilon}$ we denote the $\epsilon$-thin part of $M$ for a constant $\epsilon$ smaller than the $n$-dimensional Margulis constant. We first prove that the Bowen-Margulis measure of the thin part is (uniformly) relatively small. 
 
\begin{proposition}\label{lem:ControlThinPart}
Suppose $(M_k=\HH^{n}/ \Ga_k)_{k\in\mathbb{N}}$ is a sequence of hyperbolic manifolds that strongly converges to a geometrically finite hyperbolic manifold  $M=\HH^{n}/ \Ga$ with $\delta(\Ga)>(n-1)/2$. Let $m^k_{\rm BM}, m_{\rm BM}$ be the Bowen-Margulis measures on $T^{1}M_{k}$ and $T^{1}M$, respectively. Then for any $\alpha>0$ there exist $\epsilon>0$ and $N>0$ so that for $\epsilon'<\epsilon$ and $k>N$ we have that
\[\int_{T^{1}M^{<\epsilon'}_{k}} dm^{k}_{\rm BM} <\alpha.
\]

\end{proposition}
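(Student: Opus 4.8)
The plan is to reduce the statement to a statement about the Bowen–Margulis measure of the thin part of the \emph{limit} manifold $M$, plus a uniform control of the contribution coming from each cusp and short geodesic of $M_k$. First I would recall that since $M$ is geometrically finite with $\delta(\Ga)>(n-1)/2$, Theorem \ref{PS1} gives $\Vert m_{\rm BM}\Vert<\infty$, and the thin part $M^{<\epsilon'}$ shrinks (as $\epsilon'\to 0$) in the sense that $\int_{T^1 M^{<\epsilon'}}dm_{\rm BM}\to 0$; this is because $m_{\rm BM}$ is a finite measure and $\bigcap_{\epsilon'>0}T^1M^{<\epsilon'}$ has empty interior (in fact $m_{\rm BM}$ gives zero mass to the set of vectors whose geodesic stays in a fixed cusp forever, and to periodic vectors of a fixed short geodesic, but the cleanest way is just finiteness plus $\bigcap_{\epsilon'}T^1 M^{<\epsilon'} = \emptyset$ once we note no complete geodesic can be trapped in a Margulis tube and any geodesic entering a Margulis cusp eventually leaves unless it limits to the parabolic fixed point, a set of measure zero). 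So fix $\alpha>0$ and choose $\epsilon>0$ with $\int_{T^1 M^{<\epsilon}}dm_{\rm BM}<\alpha/2$, and moreover small enough that $\epsilon<\epsilon(n)$ the Margulis constant and that $T^1 M^{<\epsilon}$ has a neighborhood $T^1 M^{<\epsilon''}$, $\epsilon''>\epsilon$, still of $m_{\rm BM}$-mass $<\alpha/2$.

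Next I would use the strong convergence to compare $m^k_{\rm BM}$ restricted to the thick part with $m_{\rm BM}$. The thin part $M_k^{<\epsilon'}$ decomposes into Margulis tubes around short geodesics and Margulis cusp neighborhoods. By strong convergence (item (2) in the definition, plus Example \ref{ex:convergentset}(2)), for large $k$ the components of $M_k^{<\epsilon'}$ that meet $\varphi_k(U_k)$ — i.e.\ those that survive in the geometric limit — converge to components of $M^{<\epsilon'}$; the remaining components of $M_k^{<\epsilon'}$ escape to infinity. The escaping part is exactly where the new geometry (pinched curves, Dehn fillings) lives. Here is where I invoke the key analytic input: Theorem \ref{thm:cusptail} and its proof technique (uniform convergence of Poincaré series for parabolic subgroups along strong limits), which lets one control $\tilde m_{\rm BM}$ on the part of $T^1\HH^n$ projecting into a horoball by the tail of the parabolic Poincaré series. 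Concretely, for a cusp of $M$ with parabolic subgroup $L<\Ga$ and its approximating $(L_k<\Ga_k)$, the $m^k_{\rm BM}$-mass of unit vectors based in the depth-$t$ horoball is bounded, up to uniform constants, by $\sum_{\gamma\in L_k,\ \gamma \text{ "deep"}}|\gamma'(x)|^{\delta(\Ga_k)}$, which by uniform convergence of the Poincaré series goes to the corresponding limit quantity, which is small for $t$ large. Pinched/Dehn-filled short geodesics of $M_k$ are handled similarly: a Margulis tube's $m^k_{\rm BM}$-mass is controlled by the hyperbolic (cyclic loxodromic) Poincaré series $\sum_{n} e^{-\delta(\Ga_k)\,n\,\ell_k}$ where $\ell_k\to 0$ is the core length, and one checks this mass is small because a geodesic that enters a very thin, very long tube and comes back out must nearly wrap the core many times, contributing a factor that is summably small; alternatively (and more in the spirit of the paper) these are exactly the components that converge geometrically to a rank-$1$ cusp of $M$, and the cusp estimate applies.

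Putting this together: write $\int_{T^1 M_k^{<\epsilon'}}dm^k_{\rm BM} = \big(\text{mass on surviving thin components}\big) + \big(\text{mass on escaping thin components}\big)$. For the surviving part, strong convergence of $m^k_{\rm BM}$ to $m_{\rm BM}$ on the corresponding precompact pieces (which follows from convergence of Patterson–Sullivan densities, Theorem \ref{convergencePSmeasure}, together with $\delta(\Ga_k)\to\delta(\Ga)$ and the explicit Hopf-parametrization formula for $\tilde m_{\rm BM}$) gives that this is $\le \int_{T^1 M^{<\epsilon''}}dm_{\rm BM}+o(1)<\alpha/2+o(1)$. For the escaping part, the uniform Poincaré tail estimate gives a bound $<\alpha/2$ for $\epsilon'$ small and $k$ large. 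Choosing $\epsilon$ (and $\epsilon'<\epsilon$) and then $N$ accordingly yields the claim. \textbf{The main obstacle} is the escaping part: one genuinely needs a \emph{uniform-in-$k$} bound on the Bowen–Margulis mass carried by thin components that are disappearing into the ends, and the only leverage for this is the uniform convergence of parabolic Poincaré series from \cite{McMullen99} (Theorem \ref{thm:cusptail}); making the bookkeeping precise — identifying each escaping Margulis component with an approximate parabolic (or loxodromic-shrinking) subgroup and translating its $m^k_{\rm BM}$-mass into a Poincaré-series tail uniformly over all such components simultaneously — is the technical heart of the argument.
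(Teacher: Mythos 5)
Your proposal identifies the two correct ingredients — the Dalbo–Otal–Peigné style bound expressing the Bowen–Margulis mass of a cuspidal/tubular neighborhood as a weighted sum over the elementary group $\mathcal{P}_k$, and McMullen's uniform convergence of the tails of these Poincaré-type sums along a strong limit (Theorem \ref{thm:cusptail}) — and this is indeed exactly the skeleton of the paper's proof. But two parts of the bookkeeping, as you have set it up, do not quite close.

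First, the ``surviving vs.\ escaping'' split does not decouple the problem. The surviving thin components of $M_k$ are cusps (or Margulis tubes degenerating to cusps), which are \emph{not} precompact; the strong convergence of Patterson--Sullivan densities gives control of $m^k_{\rm BM}$ against $m_{\rm BM}$ only on compact pieces of $T^1\HH^n$. So your bound ``$\le \int_{T^1 M^{<\epsilon''}}dm_{\rm BM}+o(1)$'' for the surviving part does not follow: the deep end of each surviving cusp of $M_k$ needs the very same uniform-in-$k$ Poincaré tail estimate you reserve for the escaping part. The paper avoids this by treating every thin component of $M_k$ in the same way. It fixes $\epsilon$ smaller than the shortest closed geodesic of $M$, so $M^{<\epsilon}$ is purely cuspidal with finitely many components; for each cusp of $M$ with parabolic group $\mathcal{P}$ it takes the approximating elementary groups $\mathcal{P}_k<\Gamma_k$ (parabolic or loxodromic) and bounds $m^k_{\rm BM}(T^1(\mathcal{H}_k(\epsilon')/\mathcal{P}_k))$ directly by $C(\mu^k_x(\mathcal{D}))^2\sum_{p\in\mathcal{P}_k,\,p^{-1}x\in V(\epsilon')}e^{-\delta(\Gamma_k)d(p^{-1}x,x)}(d(x,px)+C)$, with constants uniform in $k$ and $\epsilon'$, and then applies \cite[Theorems 6.1, 6.2]{McMullen99} to make the right side small for $\epsilon'$ small and $k$ large. (Also note that there is no genuine ``escaping'' in the first place: since $m^k_{\rm BM}$ is supported in $T^1C(M_k)$ and $C(M_k)^{>\epsilon}$ Hausdorff-converges to $C(M)^{>\epsilon}$, the thin components carrying mass are exactly those converging to cusps of $M$.)

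Second, the proposed bound $\sum_n e^{-\delta(\Gamma_k)n\ell_k}\approx 1/(\delta\ell_k)$ for a Margulis tube with core length $\ell_k\to 0$ diverges, so it cannot be used as stated. The correct quantity is not the full loxodromic Poincaré series but its \emph{tail} over the elements with $p^{-1}x$ in a shrinking neighborhood of the parabolic fixed point, together with the geometric weight $(d(x,px)+C)$ and the $(\mu^k_x(\mathcal{D}))^2$ prefactor from DOP; it is precisely this tail that McMullen's theorem controls uniformly in $k$. Your ``alternatively'' sentence points at the right statement, but the displayed geometric series is not the bound one actually uses.
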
 

\begin{proof}
This follows Dalbo-Otal-Peigne's proof  \cite{DalboOtalPeigne00} on the finiteness of $m_{\rm BM}$.  We first let $\epsilon>0$ be a constant which is smaller than the shortest geodesic in $M$. Take a fundamental domain $F$ for the convex core of $M$ in the universal cover $\HH^{n}$, and divide $F$ as the thin part $F^{<\epsilon}$ (i.e., the intersection of $F$ with the thin part of $M$) and the thick part $F^{>\epsilon}$.  Consider  a component $D$ of $F^{<\epsilon}$, which must be a cuspidal component. Suppose that  $\mathcal{H}$ is the corresponding horoball based at the parabolic fixed point $\xi$, so that $D$ is a fundamental domain for the parabolic subgroup $\mathcal{P}<\pi_1(M)$ that preserves $\mathcal{H}$.

As detailed in \cite[page 118]{DalboOtalPeigne00} we can bound $\tilde{m}_{\rm BM}$ in $D$ by

\[\tilde{m}_{\rm BM}(T^1D) \leq \sum_{p\in\mathcal{P}}\int_{\mathcal{D}\times p\mathcal{D}} c^\mu(d\eta^-d\eta^+) \int_{(\eta^-\eta^+)\cap\mathcal{H}}dt,
\]
where  $c^\mu(d\eta^-d\eta^+)=e^{-2\delta(\Gamma)(\eta^-|\eta^+)_{x}}d\mu_{x}(\eta^{-})d\mu_{x}(\eta^{+})$ for a given point $x\in \HH^{n}$ and  $\mathcal{D}\subseteq \geo \HH^{n}\setminus\lbrace\xi\rbrace$ is a compact set such that $\lbrace p\mathcal{D} \rbrace_{p\in\mathcal{P}}$ covers $\Lambda(M) \setminus\lbrace \xi\rbrace$.  The existence of the compact set $\mathcal{D}$ is ensured by the assumption that $M$ is geometrically finite, hence the parabolic fixed point $\xi$ is bounded \cite{Bowditch93}. Now, let $\mathcal{P}_k$ be the elementary group in $M_k$ that converges to $\mathcal{P}$, which is either parabolic or loxodromic. We discuss the proof that when the groups $\mathcal{P}_{k}$ are loxodromic. The argument for parabolic subgroups is similar. 

Let  $\mathcal{H}_k$ be a neighborhood of the geodesic $\xi^-_k\xi^+_k$ preserved by $\mathcal{P}_k$ so that $\mathcal{H}_k\rightarrow\mathcal{H}$, $\xi^\pm_k\rightarrow\xi$. Since $M_k$ converges to $M$ strongly, by \cite{McMullen99} we can take $\mathcal{D}$ large enough so that $\lbrace p\mathcal{D} \rbrace_{p\in\mathcal{P}_k}$ covers $\Lambda(M_k)\setminus\lbrace \xi^-_k, \xi^+_k\rbrace$. Hence it follows that

\[m^k_{\rm BM}(T^1(\mathcal{H}_k/\mathcal{P}_k)) \leq \sum_{p\in\mathcal{P}_k}\int_{\mathcal{D}\times p\mathcal{D}} c^\mu_k(d\eta^-d\eta^+) \int_{(\eta^-\eta^+)\cap\mathcal{H}_k}dt.
\]

Assume  without loss of generality that we can take a common point $x\in\mathcal{H}_k,\mathcal{H}$. There exist compact set $K\subset \mathbb{H}^n$ and open neighbourhood $V\subseteq \mathbb{H}^n$ of $\xi$ so that for $k$ large,  if the (oriented) geodesic $\eta^-\eta^+$ with $\eta^-\in\mathcal{D}\cap\Lambda(\Gamma_k)$ and $\eta^+\in p\mathcal{D}\cap \Lambda(\Gamma_k)$  intersects $\mathcal{H}_k$, then the point of entry belongs to $K\cap \partial\mathcal{H}_k$ and $p^{-1}x$ belongs to $V$. In particular such geodesic $\eta^-\eta^+$ verifies $0\leq (\eta^-|\eta^+)_x\leq diam(K)$. Moreover,  we have that $|\int_{(\eta^-\eta^+)\cap\mathcal{H}_{k}}dt - d(x,px)|<2diam(K)$. Hence there exists a constant $C>0$ depending only on $diam(K)$ so that

\[m^k_{\rm BM}(T^1(\mathcal{H}_k/\mathcal{P}_k)) \leq C\left(\sum_{p\in\mathcal{P}'_k} \mu^k_x(\mathcal{D})\mu^k_x(p\mathcal{D})(d(x,px)+C)\right),
\]
where $\mu^{k}_{x}$ denotes the Patterson-Sullivan measure on $M_k$ and $\mathcal{P}'_k$ is the subset of $\lbrace p\in\mathcal{P}_k\,|\, p^{-1}x \in V \rbrace$ so that the summand $\int_{\mathcal{D}\times p\mathcal{D}} c^\mu_k(d\eta^-d\eta^+) \int_{(\eta^-\eta^+)\cap\mathcal{H}_k}dt$ is non-zero.

Recall that 

\[\mu^k_x(p\mathcal{D}) = \int_{\mathcal{D}} e^{-\delta(\Ga_k)B_\eta(p^{-1}x,x)}\mu^k_x(d\eta),
\]
so we would like to estimate $B_\eta(p^{-1}x,x)$. Observe that as $\mathcal{H}_k$ is preserve by $\mathcal{P}_k$, we have that if $\eta^-\eta^+$ is a geodesic with $\eta^-\in\mathcal{D}\cap\Lambda(\Gamma_k)$ and $\eta^+\in p\mathcal{D}\cap \Lambda(\Gamma_k)$ that intersects $\mathcal{H}_k$, then the exit point of $\eta^-\eta^+$ from $\mathcal{H}_k$ belongs to $pK\cap\partial\mathcal{H}_k$. By triangular inequality we have that under such conditions $|\int_{(\eta^-\eta^+)\cap\mathcal{H}_{k}}dt - B_\eta(x,px)|<2diam(K)$. Hence for $p\in\mathcal{P}'_k$
have $|B_\eta(p^{-1}x,x) - d(p^{-1}x,x)|\leq 4diam(K)$. Combining this with our previous inequality (and making the domain of the sum bigger if necessary) we get

\[m^k_{\rm BM}(T^1(\mathcal{H}_k/\mathcal{P}_k)) \leq C\left(\sum_{p\in\mathcal{P}_k, p^{-1}x\in V} (\mu^k_x(\mathcal{D}))^2e^{-\delta(\Ga_k)d(p^{-1}x,x)}(d(x,px)+C)\right).
\]
for some $C>0$ independent of $\epsilon$ and $k$.

We claim  that the above discussion holds for  smaller $\epsilon$ corresponding to a smaller neighborhood $V(\epsilon)$ for the same basepoint $x$. Consider a smaller thin part corresponding to  $\epsilon'<\epsilon$. The sets $\mathcal{H}_k, K$ vary with $\epsilon'$, although it is clear that $\mathcal{H}_k(\epsilon')\subset\mathcal{H}_k(\epsilon)$ and $diam(K(\epsilon')) < diam(K(\epsilon))$. Hence after taking a basepoint $y\in K(\epsilon')$ we have

\[m^k_{\rm BM}(T^1(\mathcal{H}_k(\epsilon')/\mathcal{P}_k)) \leq  C\left(\sum_{p\in\mathcal{P}_k, p^{-1}y\in V(\epsilon')} (\mu^k_y(\mathcal{D}))^2e^{-\delta(\Ga_k)d(p^{-1}y,y)}(d(y,py)+C)\right)
\]
for a constant $C>0$ independent of $\epsilon'$ and $k$.

The neighborhood $V(\epsilon')$ is smaller and smaller as $\epsilon'\rightarrow 0$, as if $\eta^-\eta^+$ intersects $\mathcal{H}_k(\epsilon')$ then it has to intersect $\mathcal{H}_k(\epsilon)$. Then for the $p$ summands considered for $\epsilon'$ we have

\[d(y,py)\leq d(x,px)+C',\quad \mu^k_y(\mathcal{D}) \leq C'e^{-\delta(\Ga_k)d(x,y)}\mu^k_x(\mathcal{D})
\]
for $C'$ constant independent of $\epsilon'$ and $k$. We always have the bound $d(y,py)\geq d(x,px) - 2d(x,y)$ by triangular inequality and the fact that $p$ is an isometry.

Putting altogether, we have that
\begin{equation}
\label{tail}
m^k_{\rm BM}(T^1(\mathcal{H}_k(\epsilon')/\mathcal{P}_k)) \leq  C''(\mu^k_x(\mathcal{D}))^2\left(\sum_{p\in\mathcal{P}_k, p^{-1}x\in V(\epsilon')} e^{-\delta(\Ga_k)d(p^{-1}x,x)}(d(x,px)+C'')\right)
\end{equation}
for a constant $C''>0$ independent of $\epsilon'$ and $k$. Recall that  when $\delta(\Ga_k)$ is strictly bigger than $(n-1)/2$, by \cite[Theorem 6.1]{McMullen99}, the tails of the series $\sum_{p\in\mathcal{P}_k}e^{-\delta(\Ga_k)d(p^{-1}x,x)}$ are uniformly small. Specifically, for any $\eta>0$ there exists a neighborhood $U\subset  \HH^{n}$ of $\xi$ so that

\[\sum_{p\in\mathcal{P}_k, \,px\subset U}e^{-\delta(\Ga_k)d(p^{-1}x,x)} < \eta,
\]
for $k$ sufficiently large. We also have that the tails of the series $\sum_{p\in\mathcal{P}_k}e^{-\delta(\Ga_k)d(p^{-1}x,x)}(d(x,px)+C'')$ are uniformly small, as  $d(x,px)$ is uniformly dominated by $e^{cd(p^{-1}x,x)}$ for any $c>0$. Hence by taking $\epsilon'$ sufficiently small, the right hand side of \eqref{tail} corresponds to a smaller tail of the series $\sum_{p\in\mathcal{P}_k}e^{-\delta(\Ga_k)d(p^{-1}x,x)}(d(x,px)+C'')$. Thus, by applying \cite[Theorem 6.2]{McMullen99} for the sequence of exponents $\delta(\Gamma_k)-c$, the right hand side of  will be arbitrarily small for $\epsilon'$ sufficiently small and $k$ sufficiently large.

\end{proof}

Next we use Proposition \ref{lem:ControlThinPart} to prove the convergence of the Bowen-Margulis measures. The following proposition  is a restatement of Proposition \ref{prop:margulismeasure}.

\begin{proposition}
\label{prop:convergenceBowen}
Suppose that  $(M_k=\HH^{n}/ \Ga_{k})_{k\in\mathbb{N}}$ is a sequence of hyperbolic manifolds which is strongly convergent to a geometrically finite hyperbolic manifold $M=\HH^{n}/ \Ga$ with $\delta(\Ga)>(n-1)/2$. Let $m^k_{\rm BM}, m_{\rm BM}$ be the Bowen-Margulis measures on $T^{1}M_k$ and $T^{1}M$, respectively. For $r>0$ we denote by $M_{k}^{<r}\subset M_k, M^{<r}\subset M$ the set of points with injectivity radius less than r. Then for any $r>0$ we have
\[\lim_{k\rightarrow\infty}\int_{T^1M_k^{<r}}dm^k_{\rm BM} \rightarrow \int_{T^1M^{<r}}  dm_{\rm BM}.
\]
Moreover, by taking $r$ sufficiently large we have that
\[\Vert m^k_{\rm BM}\Vert \rightarrow \Vert m_{\rm BM}\Vert.\]
\end{proposition}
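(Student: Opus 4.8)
The plan is to split each $T^1 M_k$ into the thick part $T^1 M_k^{>\epsilon}$ (or more precisely the part mapped diffeomorphically by $\varphi_k$ from a large compact core of $T^1 M$) and the thin part $T^1 M_k^{<\epsilon}$, and to treat the two pieces with completely different tools: the thin part is controlled uniformly by Proposition \ref{lem:ControlThinPart}, and on the thick part the measures converge by the strong convergence of the manifolds together with the convergence of the Patterson--Sullivan densities (Theorem \ref{convergencePSmeasure}). First I would fix $r>0$ and observe that for all sufficiently small $\epsilon$ and large $k$, the set $M_k^{<r}$ decomposes as a fixed (independent of $k$) collection of components, each of which for large $k$ is either a component of the thin part $M_k^{<\epsilon}$ (a Margulis tube or short-core neighborhood converging to a cusp) or sits inside $\varphi_k$ of a compact subset of $M^{<r}$; this is exactly the content of Example \ref{ex:convergentset} together with the strong convergence of the thick parts.

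Next, on the ``compact'' portion: using Hopf's parametrization, $\tilde m_{\mathrm{BM}}$ is an explicit density $e^{-2\delta(v_-|v_+)_{x_0}}\,d\mu_{x_0}(v_-)\,d\mu_{x_0}(v_+)\,dt$ on $\geo\HH^n\times\geo\HH^n\times\R$. Pulling back by the lifts of $\varphi_k$ and using that (i) $\varphi_k^*g_k\to g$ smoothly on compact sets, (ii) $\delta(\Ga_k)\to\delta(\Ga)$ by \cite{McMullen99} and Theorem \ref{Sullivan3}, and (iii) $\mu_k\to\mu$ weakly by Theorem \ref{convergencePSmeasure}, I would argue that for any fixed continuous compactly supported test function on $T^1 M$ (lifted appropriately), the integrals against $m^k_{\mathrm{BM}}$ converge to the integral against $m_{\mathrm{BM}}$. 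The Gromov product $(v_-|v_+)_{x_0}$ depends continuously on $(v_-,v_+)$ away from the diagonal, and the relevant region of $T^1 M_k^{<r}$ lifts to geodesics whose endpoints stay in a compact set bounded away from the diagonal (since $M^{<r}$ is at bounded distance from the convex core and the endpoints of geodesics through a compact set are separated), so no issues with the singularity of the density arise. This gives $\int_{T^1 M_k^{<r}} dm^k_{\mathrm{BM}} \to \int_{T^1 M^{<r}} dm_{\mathrm{BM}}$ for each fixed $r$, once we know the thin cuspidal pieces contribute negligibly and matchingly on both sides --- and the latter is where Proposition \ref{lem:ControlThinPart} enters: given $\alpha>0$ we pick $\epsilon$ and $N$ so that $\int_{T^1 M_k^{<\epsilon}} dm^k_{\mathrm{BM}} < \alpha$ for $k>N$, and simultaneously $\int_{T^1 M^{<\epsilon}} dm_{\mathrm{BM}} < \alpha$ (the limit manifold case being \cite{DalboOtalPeigne00}); the remaining part $M_k^{<r}\setminus M_k^{<\epsilon}$ is compact and handled by the previous paragraph.

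For the ``Moreover'' statement about total masses, I would let $r\to\infty$. Since $M$ is geometrically finite, $\|m_{\mathrm{BM}}\|<\infty$ and $\int_{T^1 M^{<r}}dm_{\mathrm{BM}}\to\|m_{\mathrm{BM}}\|$ as $r\to\infty$; so given $\alpha>0$ choose $r$ with $\|m_{\mathrm{BM}}\| - \int_{T^1 M^{<r}}dm_{\mathrm{BM}} < \alpha$. The key point is that the \emph{complement} $T^1 M_k \setminus T^1 M_k^{<r}$ consists of unit vectors based at points of injectivity radius $\geq r$, which for large $k$ all lie in the cuspidal thin part $\cusp_\epsilon(M_k)$ once $r$ is large (outside a large compact core, the only points of large injectivity radius in the convex core are deep in cusps, and outside the convex core the Bowen-Margulis measure vanishes); hence by the uniform tail estimate \eqref{tail} in the proof of Proposition \ref{lem:ControlThinPart} --- which already quantifies the Bowen-Margulis mass of the part of a cusp at depth corresponding to injectivity radius $\geq r$ --- we get $\int_{T^1 M_k}dm^k_{\mathrm{BM}} - \int_{T^1 M_k^{<r}}dm^k_{\mathrm{BM}} < \alpha$ uniformly in large $k$. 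Combining with the convergence at fixed $r$ yields $\|m^k_{\mathrm{BM}}\|\to\|m_{\mathrm{BM}}\|$.

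\textbf{Main obstacle.} The technical heart is making the thick-part convergence rigorous: one must check that the explicit Hopf density behaves continuously under the diffeomorphisms $\varphi_k$ (composed on $T^1$), that the weak convergence $\mu_k\to\mu$ can be upgraded to convergence of the product-type integrals defining $m^k_{\mathrm{BM}}$ on compact pieces --- which requires controlling the Busemann/Gromov-product factors uniformly and knowing the limit set and geodesic endpoints stay in a compact region off the diagonal --- and that the bookkeeping of which components of $M_k^{<r}$ are ``cuspidal'' versus ``core'' is uniform in $k$. The cusp tail estimate \eqref{tail} is already done, so the remaining work is essentially this compact-part analysis plus the geometric observation that large-injectivity-radius points sit in cusps.
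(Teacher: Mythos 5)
Your strategy for the fixed-$r$ statement is essentially the same as the paper's: control the cuspidal thin part uniformly using Proposition \ref{lem:ControlThinPart}, and on the compact middle region $M_k^{\epsilon,r}$ use a partition of unity together with the explicit Hopf density and the weak convergence $\mu_k\to\mu$, $\delta(\Gamma_k)\to\delta(\Gamma)$. One technical point you gloss over, which the paper addresses explicitly, is that $\partial M^{<r}$ must have zero Bowen--Margulis measure so that the partition-of-unity approximants (supported in slightly fattened shells $M^{\epsilon-\eta,r+\eta}$) actually squeeze down to $\int_{T^1 M^{<r}}dm_{\rm BM}$; the paper verifies this by observing that $\partial \tilde M^{<r}$ meets almost every geodesic in a set of length zero. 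This is a small omission, not a serious gap.

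The ``Moreover'' paragraph, however, contains a genuine conceptual error. You claim that for $r$ large the complement $M_k\setminus M_k^{<r}$ --- the set of points of injectivity radius $\geq r$ --- ``all lie in the cuspidal thin part'' and that points of large injectivity radius inside the convex core ``are deep in cusps.'' This is backwards: points deep in a cusp have \emph{small} injectivity radius (they lie in $\mathcal{T}_\epsilon(P)/\Gamma$, so carry short loops), and are therefore in $M_k^{<\epsilon}\subset M_k^{<r}$, not in the complement. Consequently the tail estimate \eqref{tail}, which bounds the mass of the $\epsilon$-thin cusp neighborhoods as $\epsilon\to 0$, has nothing to say about the set $M_k^{>r}$. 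The correct (and simpler) observation is the opposite one: the truncated cores $C(M_k)^{>\epsilon}$ converge in Hausdorff distance to the compact $C(M)^{>\epsilon}$, hence have uniformly bounded diameter, so every point of $C(M_k)$ has injectivity radius bounded by some fixed $R_0$ independent of $k$ (the cusp part being thin anyway). Taking $r>R_0$ gives $C(M_k)\subset M_k^{<r}$ for all large $k$, and since $m^k_{\rm BM}$ is supported on $T^1 C(M_k)$ you get $\int_{T^1 M_k^{<r}}dm^k_{\rm BM}=\Vert m^k_{\rm BM}\Vert$ outright, with no tail estimate needed. Combined with the fixed-$r$ convergence this yields $\Vert m^k_{\rm BM}\Vert\to\Vert m_{\rm BM}\Vert$.
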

\begin{proof} Denote $M^{a,b}=M^{>a}\cap M^{<b}$. Take $U_1,\ldots U_m\subset M$ balls with compact closure, whose union covers $C(M)^{\epsilon,r} = C(M) \cap M^{\epsilon,r}$. 
Take $\bar{\varphi}_1,\ldots,\bar{\varphi}_m$ partition of unity subordinated to $U_1,\ldots U_m$, in the sense that $\bar{\varphi}=\sum_{i=1}^m \bar{\varphi}_i$ has support contained in $M^{\epsilon-\eta,r+\eta}$ and is identically equal to $1$ in $C(M)^{\epsilon,r}$, for some arbitrarily small $\eta>0$. Let $\tilde{U_i}$ be a lift of $U_i$ in $\mathbb{H}^n$ such that the union covers a fundamental domain of $M$. We denote $\varphi_i$ a compactly supported function subordinated to $\tilde{U}_i$ such that $\varphi_i=\bar{\varphi}_{i}\circ \textup{Proj}$.

Then since the Patterson-Sullivan measures $\mu^k_{x_0}$ converge weakly to $\mu_{x_0}$, the critical exponents $\delta_k=\delta(\Ga_k)$ converge to $\delta=\delta(\Ga)$, and we can express the Bowen-Margulis measures as $d\tilde{m}^k_{\rm{BM}}(v)=e^{-\delta_k(\beta_{v_{-}}(\pi(v), x_{0})+\beta_{v^{+}}(\pi(v), x_{0}))}d\mu^k_{x_{0}}(v_{-})d\mu^k_{x_{0}}(v_{+})dt$, then for $k$ sufficiently large we have

\begin{equation}\label{eq:1stBMS}
\bigg|\int_{T^1\tilde{U_i}}\varphi_id\tilde{m}^k_{\rm BM} - \int_{T^1\tilde{U_i}}\varphi_i d\tilde{m}_{\rm BM}\bigg| <\alpha,
\end{equation}
for some small $\alpha>0$.

By Proposition \ref{lem:ControlThinPart} we have that $\int_{T^1 M^{<\epsilon}_k} dm^k_{\rm BM}, \int_{T^1 M^{<\epsilon}} dm_{\rm BM}<\alpha$, and by construction we have that

\begin{equation}\label{eq:2ndBMS}
\bigg|\int_{T^1M^{<r}}dm_{\rm BM} - \sum_{i=1}^m\int_{T^1\tilde{U_i}}\varphi_id\tilde{m}_{\rm BM}\bigg| < \int_{T^1M^{<\epsilon}} dm_{\rm BM} + \int_{T^1M^{r,r+\eta}} dm_{\rm BM}.
\end{equation}

Now, since $M_k$ converges strongly to $M$, for $k$ large we have that $\sum_{i=1}^m\int_{T^1\tilde{U_i}}\varphi_id\tilde{m}^{k}_{\rm BM}$ is bounded between $(1-\alpha)\int_{T^1M_k^{\epsilon+\eta, r-\eta}}dm^k_{\rm BM}$ and $(1+\alpha)\int_{T^1M_k^{\epsilon-\eta, r+\eta}}dm^k_{\rm BM}$. Hence

\begin{equation}\label{eq:3rdBMS}
\bigg|\int_{T^1M_k^{<r}}dm^k_{\rm BM} - \sum_{i=1}^m\int_{T^1\tilde{U_i}}\varphi_id\tilde{m}^k_{\rm BM}\bigg| < \int_{T^1M_k^{<\epsilon}} dm^k_{\rm BM} + \alpha\int_{T^1M_k^{\epsilon,r}} dm^k_{\rm BM}  + \int_{T^1M_k^{r,r+\eta}} dm^k_{\rm BM}.
\end{equation}

By a similar partition of unity argument, we can show that for any $0<a<b$ there exists $\eta_0>0$ sufficiently small so that for any $k\gg1$ sufficiently large we have that

\begin{equation}\label{eq:4thBMS}
\int_{T^1M_k^{a,b}}dm^k_{\rm BM} < 2 \int_{T^1M^{a-\eta_0,b+\eta_0}}dm_{\rm BM}.
\end{equation}

Finally, we have to see that the function $(a,b)\mapsto \int_{T^1M^{a,b}}dm^k_{\rm BM}$ is continuous. Because of monotonicity this reduces to prove that for any $r>0$,  $\int_{T^1\partial M^{<r}}dm^k_{\rm BM}=0$. Indeed, the lift $\partial \tilde{M}^{<r}\subseteq\mathbb{H}^n$ is contained the union of tubes around closed geodesics  of length $\leq r$ (considering parabolic cusps corresponding to  $0$ length geodesics). For core geodesics of length strictly less than $r$, these tubes are strictly convex and hence the boundaries intersect any geodesic in a discrete set. If we happen to have a geodesic of length $r$, then the intersection of $\partial \tilde{M}^{<r}$ with any geodesic is a discrete set, unless the geodesic is equal to the geodesic axis. In either case, the set $\partial \tilde{M}^{<r}\subseteq\mathbb{H}^n$ has zero measure for the Bowen-Margulis measure $d\tilde{m}_{\rm{BM}}(v)= e^{-2\delta(v_-|v_+)_{x_0}}d\mu_{x_{0}}(v_{-})d\mu_{x_{0}}(v_{+})dt$, as for almost every geodesic line $\ell$ the intersection $\partial \tilde{M}^{<r}\cap\ell$ has length $0$.

Applying the  triangular inequality, replacing equations (\ref{eq:1stBMS}), (\ref{eq:2ndBMS}), (\ref{eq:3rdBMS}), and then using Proposition \ref{lem:ControlThinPart},  (\ref{eq:4thBMS})  (for sufficiently large $k$ and $\eta$ sufficiently small) we have that

\begin{equation}
\begin{aligned}
    \bigg| \int_{T^1M_k^{<r}}dm^k_{\rm BM} - \int_{T^1M^{<r}}  dm_{\rm BM} \bigg| &< \bigg|\int_{T^1 M_k^{<r}}dm^k_{\rm BM} - \sum_{i=1}^m\int_{T^1\tilde{U_i}}\varphi_id\tilde{m}^k_{\rm BM}\bigg| \\&+ \sum_{i=1}^m \bigg|\int_{T^1\tilde{U_i}}\varphi_{i}d\tilde{m}^k_{\rm BM} - \int_{T^1\tilde{U_i}}\varphi_i d\tilde{m}_{\rm BM}\bigg| \\&+ \bigg|\int_{T^1M^{<r}}dm_{\rm BM} - \sum_{i=1}^m\int_{T^1\tilde{U_i}}\varphi_id\tilde{m}_{\rm BM}\bigg|\\
    &<\int_{T^1M_k^{<\epsilon}} dm^k_{\rm BM} + \alpha\int_{T^1M_k^{\epsilon,r}} dm^k_{\rm BM}  + \int_{T^1M_k^{r,r+\eta}} dm^k_{\rm BM} \\&+ m\alpha + \int_{T^1M^{<\epsilon}} dm_{\rm BM} + \int_{T^1M^{r,r+\eta}} dm_{\rm BM}\\
    &<(m+2)\alpha + 2\alpha\int_{T^1M^{<r+\eta}} dm_{\rm BM} + 3 \int_{T^1M^{r-\eta,r+2\eta}} dm_{\rm BM}
\end{aligned}
\end{equation}
which goes to $0$ as $k\rightarrow+\infty$, and $\alpha,\eta\rightarrow0$.

\end{proof}

The last part of the section is to prove the convergence of  skinning measures.

\medskip
\noindent
{\bf Proof of Corollary \ref{coro:convergenceskinning}}:
Observe that since we have strong convergence for well-positioned convex sets $D_k\rightarrow D$, we can take lifts $\widetilde{D_k}, \tilde{D}\subset \mathbb{H}^n$ and compact sets $E_k\subset \partial\widetilde{D_k}, E \subset \partial\tilde{D}$ so that $E_k$, $E$ are fundamental domains for the support of $\sigma^{\pm}_{\partial D_k}, \sigma^{\pm}_{\partial D}$ (respectively) and $E_k$ converges strongly to $E$. We can further assume there exists a set $F\subset \mathbb{S}^{n-1}$ so that $P_{D_k}(F)$, $P_D(F)$ cover $E_k$ and $E$ (respectively) on its interior, see \eqref{definemap} for the definition of the maps $P_{D_{k}}$ and $P_{D}$. Under this assumptions we have

\[\Vert\sigma^{\pm}_{\partial D_k}\Vert \leq \tilde{\sigma}^{\pm}_{\widetilde{D_k}}(P_{D_k}(F)) \rightarrow \tilde{\sigma}^{\pm}_{\tilde{D}}(P_D(F)).
\]

Reducing the set $F$ so that $\tilde{\sigma}^{\pm}_{\widetilde{D_k}}(P_{D_k}(F)\setminus E_k), \tilde{\sigma}^{\pm}_{\tilde{D}}(P_D(F)\setminus E)$ are arbitrarily small, we then have

\[\Vert\sigma^{\pm}_{\partial D_k}\Vert=\tilde{\sigma}^{\pm}_{\widetilde{D_k}}(E_k) \rightarrow \tilde{\sigma}^{\pm}_{\tilde{D}}(E) =\Vert\sigma^{\pm}_{\partial D}\Vert,
\]
which proves the first statement

The relative result for subsets $\Omega_k, \Omega$ is proved by taking  the fundamental domains $E'_k\subset \partial\widetilde{D_k}$, $E'\subset \partial\tilde{D}$ for the support of $\sigma^{\pm}_{\partial \Omega_{k}}, \sigma^{\pm}_{\partial \Omega}$ (respectively) and arguing as above.

\qed

\section{Application: uniform orthogeodesic counting}\label{uniformcounting}
In this section, we use the results of uniform spectral gap and convergence of the Bowen-Margulis and skinning measures in Section  \ref{sec:eigenvalues} and Section \ref{sec:measure} to prove Theorem \ref{thm:uniformcounting}. Suppose that $D^{+}, D^{-}$ are well-positioned convex subsets of a hyperbolic manifold $M=\HH^{n}/ \Ga$. A \emph{common perpendicular} from $D^{-}$ to $D^{+}$ is a locally geodesic path in $M$ which starts perpendicularly from $D^{-}$ and arrives perpendicularly to $D^{+}$.  For any $t\geq 0$, let $\N_{D^{-}, D^{+}}(t)$ be the cardinality of the set of common perpendiculars from  $D^-$ to $D^+$ with length at most $t$.

As before, $(M_k=\HH^{n}/ \Ga_k)_{k\in\mathbb{N}}$  is a sequence of hyperbolic manifolds which converges strongly to a geometrically finite manifold $M=\HH^{n}/ \Ga$, so that we have well-positioned convex subsets $D^\pm_k\subset M_k$ that strongly converge to $D^\pm\subset M$. Before the proof, we need to introduce the following notations.

Given $v\in T^{1}\HH^{n}$, the \emph{strong stable/unstable} manifold is defined as 
$$W^{\pm}(v)=\{v'\in T^{1}\HH^{n}: d(v(t), v'(t))\rightarrow 0 \text{ as } t\rightarrow \pm \infty\},$$
which is equipped with Hamenst\"adt's distance function $d_{W^{\pm}(v)}$, see \cite{Ursula89, ParkkonenPaulin21}. Then given any constant $r>0$, for all $v\in T^{1}\HH^{n}$, we can define the open ball of radius $r$ centered at $v$ in the strong stable/unstable manifold in the following
$$B^{\pm}(v, r)=\{v'\in W^{\pm}(v): d_{W^{\pm}(v)}(v, v')<r\}.$$
Given any $v\in T^{1}\HH^{n}$, and $\eta, \eta'>0$, let
$$V^{\pm}_{v, \eta,\eta'}=\bigcup_{s\in \left[-\eta, \eta\right]} g^{s} B^{\pm}(v, \eta').$$
Given a proper closed convex subset $D$ of $\HH^{n}$, for all subsets $\Omega^{-}$ of $\partial^{1}_{+}D$ and $\Omega^{+}$ of $\partial^{1}_{-}D$, let 
$$\V_{\eta, \eta'}(\Omega^{\pm})=\bigcup_{v\in \Omega^{\pm}} V^{\mp}_{v, \eta, \eta'}. $$

By using the projection map $\pi:T^{1}\HH^{n}\rightarrow \HH^{n}$, the strong stable/unstable manifold $W^{\pm}(v)$ projects to the \emph{stable/unstable horosphere} of $v$ centered at $v_+$ and $v_-$, denoted by $H_{\pm}(v)=\pi(W^{\pm}(v))$. The corresponding \emph{horoball} bounded by $H_{\pm}(v)$ is denoted by $HB_{\pm}(v)$. Following the notation in \cite{ParkkonenPaulin21}, we let 
$$\mu_{W^{+}(v)}=\tilde{\sigma}^{-}_{HB_{+}(v)} \quad \text{ and }  \quad \mu_{W^{-}(v)}=\tilde{\sigma}^{+}_{HB_{-}(v)}$$
denote the skinning measures on the strong stable/unstable manifolds $W^{\pm}(v)$.

\begin{definition}
Given a discrete isometry subgroup $\Ga<\Isom(\HH^{n})$, we say $(\HH^{n}, \Ga)$ has \emph{radius-continuous strong stable/unstable ball masses} if, for every $\epsilon>0$, and $r\geq 1$ close enough to $1$, 
$$\mu_{W^{\pm}(v)}(B^{\pm}(v, r))\leq e^{\epsilon} \mu_{W^{\pm}(v)} (B^{\pm}(v, 1)),$$
for all  $v\in T^{1}\HH^{n}$ where $B^{\pm}(v, 1)$ meets the support of $\mu_{W^{\pm}(v)}$.

\end{definition}

The following proposition proves that the radius-continuous property of the strong stable/unstable ball masses can be taken uniformly along a strongly convergent sequence of geometrically finite hyperbolic manifolds. 

\begin{proposition}\label{prop:radiuscontinuity}
Suppose  $(M_{k}=\HH^{n}/ \Ga_{k})_{k\in\mathbb{N}}$ is a sequence of hyperbolic  manifolds which strongly converges to a geometrically finite hyperbolic manifold $M=\HH^{n}/ \Ga$ with $\delta(\Ga)>(n-1)/2$. Let $(D_{k}\subseteq M_k)_{k\in\mathbb{N}}$ be a sequence of well-positioned convex subsets in $M_{k}$ which strongly converges to a well-positioned convex subset $D$, with lifts to $\mathbb{H}^n$ denoted by $\widetilde{D_k}, \widetilde{D}$, respectively. Let $\Omega_k^{\mp}\subseteq \partial^1_\pm \widetilde{D_k}$, $\Omega^{\mp}\subseteq \partial^1_\pm \widetilde{D}$ be compact sets so that $\Omega_k^{\mp}$ converges strongly to $\Omega^{\mp}$. Then there exists sufficiently large $R>0$ so that for any $\epsilon$ we have $\eta=\eta(\epsilon,R)>0$ satisfying that

\[\mu^k_{W^\pm(v)}(B^{\pm}(v,(1+r)R)) \leq e^\epsilon \mu^k_{W^\pm(v)}(B^{\pm}(v,R))
\]
for any $v\in\Omega_k^{\mp}$, $0<r<\eta$.

\end{proposition}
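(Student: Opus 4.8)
The plan is to translate everything into a statement about the Patterson--Sullivan measures on $\geo\HH^n$, use McMullen's weak convergence $\mu^k_{x_0}\to\mu_{x_0}$ (Theorem~\ref{convergencePSmeasure}) together with $\delta(\Ga_k)\to\delta(\Ga)$ to obtain convergence of the relevant Hamenst\"adt ball masses, and then close the argument by compactness/contradiction, reducing the \emph{uniform} radius-continuity to the radius-continuity of the limit $M$ and of each individual $M_k$ (which is geometrically finite for large $k$, by the corollary following the definition of strong convergence). Fix a basepoint $x_0$ and lifts $\widetilde{D_k}\to\widetilde D$ in $\HH^n$, so that the compact sets $\Omega_k^{\mp}\subset\partial^1_{\pm}\widetilde{D_k}\subset T^1\HH^n$ converge to $\Omega^{\mp}\subset\partial^1_{\pm}\widetilde D$ inside $T^1\HH^n$; since $\Omega^{\mp}$ is compact, everything below takes place in a fixed compact region of $\HH^n$. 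For $v\in T^1\HH^n$ the endpoint map $w\mapsto w_{\mp}$ identifies $W^{\pm}(v)$ with $\geo\HH^n\setminus\{v_{\pm}\}$; under this identification $\mu_{W^{\pm}(v)}$ has a strictly positive, continuous density with respect to $\mu_{x_0}$ (an exponential of a Busemann cocycle with exponent $\delta$), while the Hamenst\"adt ball $B^{\pm}(v,R)$ becomes an open set $\mathcal O^{\pm}_v(R)$ which increases in $R$, depends continuously on $v$, exhausts $\geo\HH^n\setminus\{v_{\pm}\}$ as $R\to\infty$, and has round-sphere boundary. The same holds for $M_k$ with $\mu^k_{x_0}$ and $\delta(\Ga_k)$.

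The heart of the argument is the choice of a single good radius together with convergence of ball masses. For a fixed $v$, only countably many radii $R$ are \emph{bad} (i.e.\ $\mu_{x_0}(\partial\mathcal O^{\pm}_v(R))>0$), since the spheres $\partial\mathcal O^{\pm}_v(R)$ are pairwise disjoint and $\mu_{x_0}$ is finite; a bad sphere must carry positive Patterson--Sullivan mass, which, $\Ga$ being geometrically finite and $\mu_{x_0}$ nonatomic, constrains it severely (in particular it is null unless $\Ga$ fails to be Zariski dense, in which case $\La(\Ga)$ lies in a totally geodesic boundary sphere and the bad spheres are constrained to a low-dimensional family). A Fubini argument over $R$, against a fully supported measure on $\Omega^{\mp}$, then produces arbitrarily large $R_0$ that are simultaneously good for every $v\in\Omega^{\mp}$; we also take $R_0$ large enough that $\mu_{x_0}(\mathcal O^{\pm}_v(R_0))\ge c_0>0$ uniformly on $\Omega^{\mp}$, which is possible because the complement of $\mathcal O^{\pm}_v(R_0)$ is a neighbourhood of $v_{\pm}$ shrinking uniformly over the compact set $\{v_{\pm}:v\in\Omega^{\mp}\}$ and $\mu_{x_0}$ is nonatomic with infinite support $\La(\Ga)$. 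Fixing such an $R_0$, the weak convergence $\mu^k_{x_0}\to\mu_{x_0}$, the convergence $\delta(\Ga_k)\to\delta(\Ga)$, and the continuity of the densities and of $v\mapsto\mathcal O^{\pm}_v(R)$ give that whenever $v_k\in\Omega_k^{\mp}$ converges to $v\in\Omega^{\mp}$, the measures $\mu^k_{W^{\pm}(v_k)}$ converge weakly to $\mu_{W^{\pm}(v)}$ with no escape of mass; sandwiching $B^{\pm}(v_k,R_0)$ between $B^{\pm}(v,(1-\rho)R_0)$ and $\overline{B^{\pm}(v,(1+\rho)R_0)}$, applying the portmanteau theorem and letting $\rho\to0$, we conclude $\mu^k_{W^{\pm}(v_k)}(B^{\pm}(v_k,R_0))\to\mu_{W^{\pm}(v)}(B^{\pm}(v,R_0))$, and in particular $\mu^k_{W^{\pm}(v)}(B^{\pm}(v,R_0))\ge c_0/2$ for all $v\in\Omega_k^{\mp}$ and all large $k$.

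Now suppose the proposition fails for this $R_0$: there are $\epsilon_0>0$, radii $r_j\downarrow 0$, indices $k_j$, and vectors $v_j\in\Omega_{k_j}^{\mp}$ with $\mu^{k_j}_{W^{\pm}(v_j)}(B^{\pm}(v_j,(1+r_j)R_0))>e^{\epsilon_0}\mu^{k_j}_{W^{\pm}(v_j)}(B^{\pm}(v_j,R_0))$. Passing to a subsequence we may assume either that $k_j$ is eventually constant, which contradicts the radius-continuity of the strong stable/unstable ball masses of the geometrically finite manifold $M_{k_j}$ (Parkkonen--Paulin), or that $k_j\to\infty$, in which case after a further subsequence $v_j\to v\in\Omega^{\mp}$; then the convergence of ball masses above, the sandwiching argument applied to $B^{\pm}(v_j,(1+r_j)R_0)$, and the left-continuity of $R\mapsto\mu_{W^{\pm}(v)}(B^{\pm}(v,R))$ yield $\limsup_j$ of the ratio $\le \mu_{W^{\pm}(v)}(\overline{B^{\pm}(v,R_0)})/\mu_{W^{\pm}(v)}(B^{\pm}(v,R_0))=1$, since $R_0$ is good at $v$ and $\mu_{W^{\pm}(v)}(B^{\pm}(v,R_0))\ge c_0>0$; this contradicts the lower bound $e^{\epsilon_0}$. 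Hence such an $R_0$ (together with $\eta(\epsilon,R_0)$ extracted from the same compactness) works.

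The main obstacle is the middle step: promoting the merely weak convergence of the Patterson--Sullivan, hence skinning, measures to honest convergence of Hamenst\"adt ball masses, uniformly over the compact families $\Omega_k^{\mp}$. This is precisely what forces the uniform-in-$v$ choice of a radius $R_0$ whose round Hamenst\"adt spheres are $\mu_{x_0}$-null, which in turn rests on understanding which round spheres can carry positive Patterson--Sullivan mass for a geometrically finite group, and on ruling out escape of mass along the sequence; both rely on the compactness built into the notions of well-positioned convex set and of strong convergence. An alternative, perhaps closer to the methods of \cite{ParkkonenPaulin21} and to Theorem~\ref{thm:cusptail}, would be to prove uniform shadow-type estimates for $\mu^k_{x_0}$ along the strongly convergent sequence — using McMullen's uniform control of the Poincar\'e series near the cusps — and to deduce the uniform radius-continuity directly by comparing the measures of concentric Hamenst\"adt balls.
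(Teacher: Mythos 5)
Your overall strategy is essentially the one the paper uses: invoke the continuity of $(v,R)\mapsto\mu_{W^\pm(v)}(B^\pm(v,R))$ for the geometrically finite limit (via Roblin), combine it with McMullen's weak convergence of Patterson--Sullivan densities and $\delta(\Ga_k)\to\delta(\Ga)$, and close by compactness. The difference is structural: the paper argues directly, establishing a uniform positive lower bound $\mu^k_{W^-(v)}(B^-(v,R))\geq m$ and a uniform upper bound $\mu^k_{W^-(v)}(A(v,R,r))<\epsilon$ on the annulus by covering with finitely many balls $B^-(v_i,\cdot)$ and transporting masses between nearby leaves through the projections $P_u^{-1}P_{v_i}$ (with conformal factors pinned between $1/2$ and $2$); you argue by contradiction along a subsequence $v_j\in\Omega_{k_j}^{\mp}$. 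Both are workable, though the paper's version is more quantitative, which matters when the proposition is fed into the explicit error estimates in the Appendix.

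There is, however, a genuine gap in your write-up, namely the Fubini step used to produce a single ``good'' radius $R_0$. From the fact that the bad set $\{(v,R):\mu_{x_0}(\partial\mathcal O^\pm_v(R))>0\}$ is null, Fubini yields only that for almost every $R$ the slice of bad $v$'s is null; it does \emph{not} yield a single $R_0$ with no bad $v$ at all, and a null set of bad $v$'s in $\Omega^{\mp}$ would be fatal to the subsequence argument, since the limit point $v$ of the $v_j$ could land precisely in it. Fortunately the entire good-radius discussion (including the Zariski-density digression and the speculation about which round spheres can carry Patterson--Sullivan mass) is unnecessary: the result of Roblin that the paper cites, namely \cite[Proposition~6.2]{Roblin03} via \cite[Section~3.1]{Roblin00}, gives \emph{joint} continuity of $(v,R)\mapsto\mu_{W^\pm(v)}(B^\pm(v,R))$ for geometrically finite groups, which is equivalent to saying that every Hamenst\"adt sphere $\partial B^\pm(v,R)$ is $\mu_{W^\pm(v)}$-null. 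So every $R$ is already good, and you should simply cite this instead of the Fubini argument. A secondary point to be careful about: the balls $B^\pm(v_j,\cdot)$ and $B^\pm(v,\cdot)$ live in different strong stable/unstable leaves $W^\pm(v_j)$, $W^\pm(v)$. Your ``sandwiching'' between $B^\pm(v,(1-\rho)R_0)$ and $\overline{B^\pm(v,(1+\rho)R_0)}$ is the right idea, but to compare the measures you must first transport one leaf to the other; the paper does this explicitly through the maps $P_u^{-1}P_{v_i}$ with controlled conformal factor, and you should make the analogous identification (for instance via the endpoint map to $\geo\HH^n$ with the continuous exponential-Busemann density) explicit before applying the portmanteau theorem.
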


\begin{proof}
Let's prove that case for $\Omega^{+}$, and the proof for $\Omega^-$ is similar. 
As done in the proof of \cite[Proposition 6.2]{Roblin03} (using \cite[Section 3.1]{Roblin00}), the function $(v,R)\mapsto \mu_{W^\pm(v)}(B^\pm(v,R))$ is continuous for $v\in T^1\mathbb{H}^n, R>0$, as well as $\Gamma$-invariant. Moreover, since $\Omega^+$ is compact, there exists $R>0$ sufficiently large so that the function $v\mapsto \mu_{W^-(v)}(B^-(v,R))$ is a uniformly continuous positive function in some neighborhood of $\Omega^-$. It suffices then to prove the statement for sufficiently large $k$.

Denote by $A(v,R,r) = B^-(v,(1+r)R)\setminus B^-(v,R) \subset W^-(v)$ the annulus in $W^-(v)$ with center $v$ between radius $R, (1+r)R$.  We will show that there exists $m>0$ and function $\eta(\epsilon)>0$ so that for $k$ large, $v\in \mathcal{V}_{\eta,\eta} (\Omega^{+})$ and $0<r<\eta$ the following two statements hold

\begin{enumerate}
    \item\label{item:ballmass} $ \mu^k_{W^-(v)}(B^-(v,R))\geq m$,
    
    \item\label{item:annulimass} $ \mu^k_{W^-(v)}(A(v,R,r)) < \epsilon$.
\end{enumerate}
Then it is clear that the statement follows from (\ref{item:ballmass}) and (\ref{item:annulimass}) by making $\epsilon$ arbitrarily small. Now we prove items (\ref{item:ballmass}) and (\ref{item:annulimass}) respectively. 

\begin{enumerate}
    \item For a vector $u\in T^{1}\HH^{n}$, we define a function $P_{u}: W^{-}(u)\rightarrow \geo\HH^{n}$ where $P_{u}(v)$ is the endpoint of the bi-infinite geodesic $u_{-}\pi(v)$  different from $u_{-}$ as shown in Figure \ref{F1}. Since $\mathcal{V}_{\eta,\eta} (\Omega^+)$ has compact closure, we can take finitely many $v_i \in \mathcal{V}_{\eta,\eta} (\Omega^+)$ so that for any $u\in\mathcal{V}_{\eta,\eta} (\Omega^+)$,  there exists $v_i$ such  that
    \[ P_u^{-1}P_{v_i}(B^-(v_i,R/2))\subseteq B^-(u,R).
    \]
    Moreover, we can assume that the conformal factor between $\mu^k_{W^-(v_i)}$ and $\mu^k_{W^-(u)}$ at the sets $B^-(v_i,R/2)$, $P_u^{-1}P_{v_i}(B^-(v_i,R/2))$ is between $1/2$ and $2$.
    This can be done uniformly for all $k$ by following \cite[Subsection 1.H]{Roblin03}. By taking $\eta$ small we can assume that $\mu_{W^-(v_i)}(B^-(v_i,R/2)) > 2m$ for some fixed $m>0$ and for any $v_i\in \mathcal{V}_{\eta,\eta} (\Omega^+)$. Then by weak-convergence of measures, we have that for any $v_i$ (and large $k$)  $\mu^k_{W^-(v_i)}(B^-(v_i,R/2)) > 2m$. Then it follows that
    \[\mu^k_{W^-(u)}(B^-(u,R)) \geq \mu^k_{W^-(u)}(P_u^{-1}P_{v_i}(B^-(v_i,R/2))) \geq \frac12 \mu^k_{W^-(v_i)}(B^-(v_i,R/2)) > m.
    \]
    
    \item Since $\mathcal{V}_{\eta,\eta} (\Omega^+)$ has compact closure and $\delta(\Ga)>(n-1)/2$, given $\epsilon>0$ we can take $\eta$ small enough so that for $v\in\mathcal{V}_{\eta,\eta} (\Omega^+)$ we have that $\mu_{W^-(v)}(A(v,R,5\eta))<\epsilon$. We will take again a finite collection of vectors $v_i$, although now they need to satisfy the following list of properties. 
    \begin{itemize}
        \item The finite collection of $v_i$ is taken so that $B^-(v_i,4\eta)\subset A(v,R,5\eta)$ for some $v\in \mathcal{V}_{\eta,\eta} (\Omega^+)$. Denote their total number by $C_2$,
        \item For any $v\in \mathcal{V}_{\eta,\eta} (\Omega^+)$ and any $B^-(u,2\eta)\subset A(v,R,5\eta)$ we have that
        \[ P_u^{-1}P_{v_i}(B^-(v_i,4\eta))\supseteq B^-(u,2\eta)
        \]
        with conformal factor bounded between $\frac12$ and $2$.
    \end{itemize}
    Take sufficiently large $k$ so that $\mu^k_{W^-(v_i)}(B^-(v_i,4\eta)) \leq  \mu_{W^-(v_i)}(B^-(v_i,4\eta)) + \zeta$ for $\zeta$ small still to be determined.
    
    Let $v\in \mathcal{V}_{\eta,\eta} (\Omega^+)$. Cover $A(v,R,\eta)$ by finitely many disjoint measurable sets $B_j$, so that each $B_j$ is contained in a ball $B^-(u_j,2\eta)$ inside of $A(v,R,5\eta)$. Then by the second bullet point, for each $u_j$ we choose $v_i$ so that
    \[ P_{u_j}^{-1}P_{v_i}(B^-(v_i,4\eta))\supseteq B^-({u_j},2\eta).
    \]
    Observe that each $v_i$ can only be repeatedly selected less than $C_3$ times, for some constant $C_3$ depending only on the dimension $n$. Then we have the following chain of inequalities, which follow from the covering $\{B_j\}$ of $A(v,R,r)$, the inclusion $P_{u_j}^{-1}P_{v_i}(B^-(v_i,4\eta))\supseteq B^-({u_j},2\eta) \supseteq B_j$, the bound on the conformal factor of $P_u^{-1}P_{v_i}$, the convergence $\mu^{k}_{W^-(v)} \rightarrow \mu_{W^-(v)}$, the inclusion $B^-(v_i,4\eta)\subset A(v,R,5\eta)$, and the bound on the cardinality of the finite set of $v_i$'s.
    \begin{equation}
    \begin{aligned}
    \mu^k_{W^-(v)}(A(v,R,r)) &\leq \sum_j \mu^k_{W^-(v)}(B_j) \leq C_3\sum_i \mu^k_{W^-(v)}(P_{u_j}^{-1}P_{v_i}(B^-(v_i,4\eta)))\\
    &\leq 2 C_3\sum_i \mu^k_{W^-(v)}(B^-(v_i,4\eta)) \leq 2C_3 \sum_i \left(\mu_{W^-(v)}(B^-(v_i,4\eta)) + \zeta\right)\\
    &\leq 4C_3\sum_i \left(\mu_{W^-(v)}(A(v,R,5\eta)) + \zeta\right) \leq 4C_2 C_3 (\epsilon + \zeta)
    \end{aligned}
    \end{equation}
    which is arbitrarily small for $\eta$ small and $k$ large.
    
\end{enumerate}
\end{proof}

\begin{figure}
\centering
\includegraphics[width=2.0in]{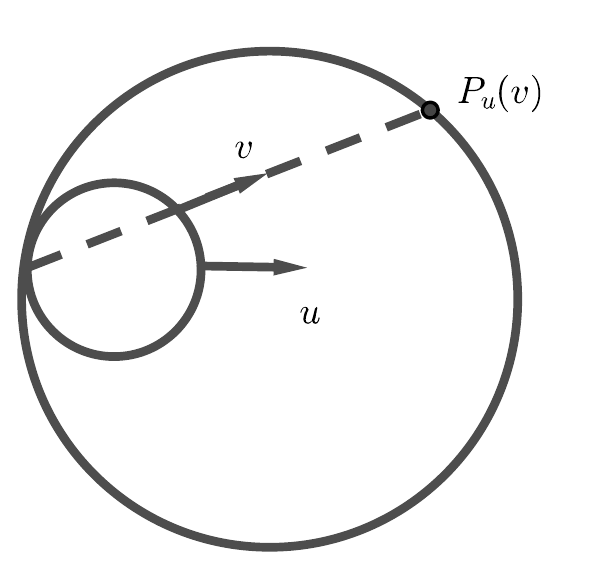}
\caption{\label{F1}}
\end{figure}

Now we state and sketch the general uniform orthogeodesic counting for convergent sequences of convex sets in strongly convergent hyperbolic $n$-manifolds. For a thorough presentation, we refer the reader to Theorem \ref{thm:appendix} in the Appendix. 
\noindent
\begin{theorem}
\label{mainthm:counting}
Suppose that $(M_{k}=\HH^{n}/ \Ga_{k})_{k\in\mathbb{N}}$ is a sequence of hyperbolic manifolds  which strongly converges to a geometrically finite hyperbolic manifold $M=\HH^{n}/\Ga$ with $\delta(\Ga)>(n-1)/2$. Let $(D^{\pm}_{k})_{k\in\mathbb{N}}$ be a sequence of well-positioned convex subsets in $M_k$ which converges strongly to a well-positioned convex subset $D^{\pm}$ in $M$, respectively. Then we can count $\N_{D^{-}_k, D^{+}_k}(t)$ uniformly, in the sense that 
$$\N_{D^{-}_k, D^{+}_k}(t)\approx\dfrac{||\sigma^{+}_{D^{-}_k}||\cdot || \sigma^{-}_{D^{+}_k}||}{\delta(\Gamma_k) ||m^k_{\rm{BM}}||} e^{\delta(\Ga_k) t}$$
up to a multiplicative error uniformly close to 1 along the sequence as $t$ gets larger, and with $||\sigma^{\mp}_{D^{\pm}_k}||, ||m^k_{\rm{BM}}||,\delta(\Gamma_k) $ converging to $||\sigma^{\pm}_{D^{\mp}}||, ||m_{\rm{BM}}||,\delta(\Gamma)$, respectively. In particular, for $n=3$, there exist constants $A>0,0<b<2$ so that
$$\N_{D^{-}_k, D^{+}_k}(t)\leq Ae^{bt}$$
\end{theorem}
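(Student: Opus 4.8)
The plan is to establish the asymptotic counting formula for $\N_{D^-_k,D^+_k}(t)$ by invoking the Parkkonen–Paulin orthogeodesic counting machinery (as in \cite{ParkkonenPaulin21}), and to trace through its proof keeping track of \emph{uniformity} along the strongly convergent sequence. The key point is that the error term in that machinery is controlled by three ingredients: (i) the rate of exponential mixing of the geodesic flow on $T^1 M_k$ with respect to $m^k_{\rm BM}$, (ii) the radius-continuity of the strong stable/unstable ball masses $\mu^k_{W^\pm(v)}$, and (iii) the total masses $\Vert\sigma^\pm_{\partial D^\mp_k}\Vert$ and $\Vert m^k_{\rm BM}\Vert$ entering the leading coefficient. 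Ingredient (ii) is exactly Proposition \ref{prop:radiuscontinuity}, which gives a uniform $\eta(\epsilon,R)$ valid for all large $k$; ingredient (iii) is handled by Corollary \ref{coro:convergenceskinning} and Proposition \ref{prop:convergenceBowen} (via Proposition \ref{prop:margulismeasure}), which give $\Vert\sigma^\pm_{\partial D^\mp_k}\Vert\to\Vert\sigma^\pm_{\partial D^\mp}\Vert$ and $\Vert m^k_{\rm BM}\Vert\to\Vert m_{\rm BM}\Vert$, together with $\delta(\Ga_k)\to\delta(\Ga)$ from \cite{McMullen99, CanaryTaylor}. For ingredient (i), the strategy is to use Theorem \ref{thm:uniformgap}: the Lax–Phillips spectral gaps $s_1(M_k)$ converge to $s_1(M)>0$, so they are uniformly bounded below, and by \cite{EdwardsOh21} a uniform lower bound on the spectral gap yields a uniform exponential mixing rate for the geodesic flows on the sequence $(T^1 M_k, m^k_{\rm BM})$.

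With these uniform inputs in hand, I would run the standard argument: express $\N_{D^-_k,D^+_k}(t)$ as an integral against $dm^k_{\rm BM}$ of an indicator-type function supported on a thickening $\V_{\eta,\eta}(\Omega^-_k)$ of the outer normal bundle pushed forward by the geodesic flow for time up to $t$, decompose it dynamically using the stable/unstable foliations, and apply exponential mixing to the resulting correlation integrals. The main outputs are a leading term $\frac{\Vert\sigma^+_{D^-_k}\Vert\cdot\Vert\sigma^-_{D^+_k}\Vert}{\delta(\Ga_k)\Vert m^k_{\rm BM}\Vert}e^{\delta(\Ga_k)t}$ and an error term of the shape $O\!\left(e^{(\delta(\Ga_k)-\kappa_k)t}\right)$ for some $\kappa_k>0$; the uniformity of the mixing rate gives $\kappa_k\geq\kappa_0>0$ for all large $k$, and the uniformity of the radius-continuity constants ensures the implied multiplicative constants in the error are uniform. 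Dividing the error by the leading term gives a multiplicative error of the form $1+O(e^{-\kappa_0 t})$, uniform along the sequence. Since all of $\Vert\sigma^\pm_{D^\mp_k}\Vert$, $\Vert m^k_{\rm BM}\Vert$, $\delta(\Ga_k)$ converge to their limiting values, the leading coefficient converges as well.

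Finally, for $n=3$ the crude bound $\N_{D^-_k,D^+_k}(t)\leq Ae^{bt}$ with $0<b<2$ follows immediately: since $\delta(\Ga)\le 2$ always holds in dimension $3$ (indeed $\delta(\Ga)<2$ unless $\Ga$ is a lattice, and in all cases $\delta(\Ga_k)\to\delta(\Ga)$), one can choose $b$ with $\delta(\Ga)<b<2$ (or, in the lattice case, simply absorb the polynomial-in-$t$ correction into the exponential and take $b$ slightly less than $2$), so that $e^{\delta(\Ga_k)t}(1+O(e^{-\kappa_0 t}))\leq Ae^{bt}$ for a uniform constant $A$ and all $t\ge 0$, for $k$ large.

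\medskip
The step I expect to be the main obstacle is deducing \textbf{uniform exponential mixing} of the geodesic flows $(g^t_k)$ on $(T^1 M_k, m^k_{\rm BM})$ from the uniform spectral gap. The transfer from a spectral gap for the Laplacian on $M_k$ to a quantitative decay-of-correlations statement for the geodesic flow in the relevant (Hölder or Sobolev) norms — uniformly in $k$, in particular with test functions supported near the noncompact cuspidal parts whose geometry is also varying — requires care: one must ensure that the representation-theoretic/spectral estimates of \cite{EdwardsOh21} (originally for a fixed manifold or for congruence towers) apply with constants depending only on the spectral gap bound and on uniform geometric bounds (injectivity radius lower bounds on the thick part, uniform control of the cusp shapes) that are supplied by strong convergence. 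Verifying that these constants can indeed be taken uniform — and that the Margulis-function/cusp-excursion estimates needed to push the mixing argument through the noncompact setting are uniform, which is where Proposition \ref{lem:ControlThinPart} is essential — is the technical heart of the argument.
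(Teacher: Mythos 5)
Your proposal takes essentially the same route as the paper: invoke the Parkkonen--Paulin counting machinery and show that each input to the error term --- the mixing rate (Theorem~\ref{thm:uniformgap} plus \cite{EdwardsOh21}), the radius-continuity of strong stable/unstable ball masses (Proposition~\ref{prop:radiuscontinuity}), and the skinning and Bowen--Margulis masses (Corollary~\ref{coro:convergenceskinning}, Proposition~\ref{prop:margulismeasure}) --- is uniform along the strongly convergent sequence, with the paper carrying out the detailed bookkeeping (and the replacement of radius-H\"older-continuity by plain radius-continuity via \cite[Lemma~11]{ParkkonenPaulin21}) in its Appendix. One small imprecision: for $n=3$, your parenthetical about the lattice case does not work --- if $\delta(\Gamma)=2$ then $\delta(\Gamma_k)\to 2$, so no fixed $b<2$ dominates $e^{\delta(\Gamma_k)t}$, and there is no polynomial-in-$t$ correction in the formula to absorb; the statement is really for $\delta(\Gamma)<2$, where your choice $\delta(\Gamma)<b<2$ is exactly right.
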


\begin{proof}
There is an explicit counting formula of $\N_{D^{-}, D^{+}}(t)$ for orthogeodesic arcs between two convex sets $D^{\pm}$ given  in \cite[Theorem 3]{ParkkonenPaulin21}:
$$\N_{D^{-}, D^{+}}(t)=\dfrac{||\sigma^{+}_{D^{-}}||\cdot || \sigma^{-}_{D^{+}}||}{\delta ||m_{\rm{BM}}||} e^{\delta(\Ga) t}(1+O(e^{-\kappa t})).$$
This formula holds under the assumption that $(\HH^{n}, \Ga)$ has radius-continuous strong stable/unstable masses.  The constant $O(\cdot)$ and  the parameter $\kappa$ depends on $\Ga$, the convex sets  $D^{\pm}$, the speed of mixing, and the property of  radius-continuous strong stable/unstable masses.

By Proposition \ref{prop:convergenceBowen} and Corollary \ref{coro:convergenceskinning}, the Bowen-Margulis measure and the skinning measures converge to the ones of the limit manifold $M$ weakly. The critical exponent $\delta(\Ga_{k})$ converges to $\delta(\Ga)$ \cite[Theorem 1.5]{McMullen99}. The convergence of the speed of mixing is controlled by the spectral gap \cite{EdwardsOh21}. Hence this quantity also converges to the one of the limit manifold by Theorem \ref{thm:uniformgap}. Therefore, it suffices to prove the sequence $\Ga_k$ and the limit $\Ga$ have uniform radius-continuous strong stable/unstable ball masses property, which follows from Proposition \ref{prop:radiuscontinuity}.

\end{proof}

\begin{remark}
Careful readers might notice that \cite[Theorem 3]{ParkkonenPaulin21} has the assumption that the manifold has \emph{radius-H\"older-continuous strong stable/unstable ball masses}, which is not satisfied by the strongly convergent sequence of hyperbolic manifolds $M_k$ and the limit manifold $M$. However, this assumption can be replaced by  the property of  radius-continuous strong stable/unstable masses \cite[Lemma 11]{ParkkonenPaulin21}, and the uniform radius-continuity suffices to control the error term in our setting.  We write down the details about the replacement in the Appendix for readers' convenience, and most of the arguments follow from \cite{ParkkonenPaulin21}.
\end{remark}

\medskip
\noindent
{\bf Proof of Theorem  \ref{thm:uniformcounting}}: By Example \ref{ex:convergentset}, connected components $D_{k}^{\pm}$ in the thin part of $M_{k}$ are well-positioned convex sets that are strongly  convergent to the well-positioned convex sets $D^{\pm}$ (respectively). By Theorem \ref{mainthm:counting}, there is a uniform counting formula for orthogeodesics between $D^{-}_{k}$ to $D^{+}_{k}$ along the sequence. This proves item (1). Similarly, for small $r>0$, the radius $r$ embedded balls centered at $x_{k}$ are well-positioned convex subsets which are strongly convergent to the embedded $r$-ball centered at $x$. In that case, let $D^{+}_{k}=D^{-}_{k}$ be the radius $r$ ball at $x_k$, and $D^+=D^-$ be the radius $r$ ball at $x$. Observe that if we change the radius $r>0$ to a radius $s>0, s<r$ we have a one-to-one correspondence between the set of orthogeodesics by extending/shortening the geodesic arcs. Such correspondence takes an orthogeodesic of length $\ell$ to its extension of length $\ell+2(r-s)$. Hence applying Theorem \ref{mainthm:counting} again and  making $s$ arbitrarily small (or equivalently, translating by $2r$ the counting function for the balls of radius $r$), we obtain the uniform counting for geodesic loops based at $x_k$ along the sequence. 

\qed

\medskip
\noindent
{\bf Proof of Corollary \ref{coro:simple}}: As explained for instance by Roblin in \cite[Chapter 5]{Roblin03}, one can deduce an asymptotic counting of closed primitive geodesics in manifolds with  negative pinched curvature from the asymptotic counting of orbit distance (i.e., geodesic loops), which only depends on the geometry of the universal cover. Namely, if $\mathcal{G}_{M}(\ell)$ is the set of closed primitive geodesics in $M$ of length less than $\ell>0$, then \cite[Corollary 5.3]{Roblin03}
\[\#\mathcal{G}_{M}(\ell) \approx \frac{e^{\delta\ell}}{\delta\ell} \text{ as } \ell\rightarrow+\infty.
\]

Combining with the uniform counting of geodesic loops (Theorem \ref{thm:uniformcounting}), we obtain the uniform counting of closed primitive geodesics along a strongly convergent sequence of hyperbolic manifolds. 

\qed

\section*{Appendix}

Let's start with notations needed in the Appendix. Recall that $P_{D}: \HH^{n}\cup (\geo \HH^{n}\setminus \geo D)\rightarrow D$ is the closest point map defined in Section \ref{subsec:BMmeasure} for any nonempty proper closed convex subset $D$ in $\HH^{n}$. Let $P^{+}_{D}$ denote the inverse of the restriction to $\partial^{1}_{+}D $ of the \emph{positive endpoint map} $v\mapsto v_{+}$, which is a homeomorphism from $\geo \HH^{n}\setminus \geo D$ to $\partial^{1}_{+} D$. It is a natural lift of $P_{D}$ such that $\pi\circ P^{+}_{D}=P_{D}$ on $\geo \HH^{n}\setminus \geo D$ where $\pi: T^{1}\HH^{n}\rightarrow \HH^{n}$. Similarly, one can define $P^{-}_{D}=\iota \circ  P^{+}_{D}$, where $\iota: T^{1}\HH^{n}\rightarrow T^{1} \HH^{n}$ is the \emph{antipodal flip map} given by $\iota v=-v$. 

Define 
$$\U_{D}^{\pm}=\{v\in T^{1}\HH^{n}: v_{\pm}\notin \geo D\}.$$
This is an open set in $T^{1}\HH^{n}$ which is invariant under the geodesic flow and satisfies the $\U^{\pm}_{\gamma D}=\gamma \U^{\pm}_{D}$ for any $\ga\in \Isom(\HH^{n})$. Define a fibration $f_{D}^{+}: \U^{+}_{D}\rightarrow \partial^{1}_{+} D$ as the composition of the positive endpoint map and $P^{+}_{D}$. Given $w\in \partial^{1}_{+} D$, the fiber of $w$ for $f^{+}_{D}$ is the set 
$$W^{0+}(w)=\{v\in T^{1}\HH^{n}: v_{+}=w_{+}\}.$$
Similarly, one can define a fibration $f^{-}_{D}=\iota\circ f^{+}_{D}\circ \iota: \U^{-}_{D}\rightarrow \partial^{1}_{-} D$ and the fiber $W^{0-}(w)=\{v\in T^{1}\HH^{n}: v_{-}=w_{-}\}$.

Suppose that $D^{\pm}$ are two well-positioned convex subsets in $M=\HH^{n}/ \Ga$, and $\psi^{\pm}\in C^{\infty}_{0}(T^{1}M)$ are compactly supported functions. Let 
$$\mathcal{N}_{\psi^{-}, \psi^{+}}(t)=\sum_{\lambda, 0<\ell_\lambda\leq t} \psi^{-}(v^{-}_{\lambda})\psi^{+}(v^{+}_{\lambda})$$
where the sum is taken over all common perpendiculars $\lambda$ between $D^{-}$ and $D^{+}$ whose initial vector $v^{-}_{\lambda}$ belongs to $\partial^{1}_{+} D^{-}$ and the terminal vector $v^{+}_{\lambda}$ belongs to $\partial^{1}_{-} D^+$, and the length $\ell_{\lambda}\leq t$. 

In order to count orthogeodesics between $D^-$ and $D^+$, we can parametrize the set of orthogeodesics by a quotient of $\Gamma$ up to a choice of basepoint. Denote by $\widetilde{D^\pm}$ the lifts of $D^\pm$ in $\mathbb{H}^n$, and distinguish two components $D^\pm_0\subset \widetilde{D^\pm}$. Then for each $\gamma\in\Gamma$ we can consider the projection to $M$ of the unique orthogeodesic between $D^-_0$ and $\gamma D^+_0$ such that the closures of $D^{-}_{0}$ and $\gamma D^{+}_{0}$ in $\HH^{n}\cup \geo \HH^{n}$ have empty intersection. It is a simple exercise to see that $\gamma_1,\gamma_2\in \Gamma$ map to the same orthogeodesic if and only if there exists $g^\pm\in Stab(D^\pm_0)$ so that $\gamma_1 = g^-\gamma_2 g^+$. Hence we can parametrize orthogeodesic by taking the quotient $\Gamma/\sim:= \Gamma/\lbrace \gamma_1 = g^-\gamma_2 g^+, g^\pm\in Stab(D^\pm_0) \rbrace$. Although this labeling depends on the choice of $D^\pm_0$, we will always work once this decision has been made. We use  $v^\mp_{\gamma}\in \partial^1_\pm D^\mp$ to denote the unit tangent vector of $\gamma$ at the start/end.

\begin{theorem}\label{thm:appendix}
Suppose that $(M_{k}=\HH^{n}/ \Ga_{k})_{k\in\mathbb{N}}$ is a sequence of hyperbolic manifolds  which strongly converges to a geometrically finite hyperbolic manifold $M=\HH^{n}/\Ga$ with $\delta(\Ga)>(n-1)/2$. Let $(D^{\pm}_{k})_{k\in\mathbb{N}}$ be a sequence of well-positioned convex subsets in $M_k$ which strongly converges to $D^{\pm}$ in $M$, respectively. Let as well $(\psi^\pm_k \in C^\infty_0(T^1M_k))_{k\in\mathbb{N}}$, $\psi^\pm\in C^\infty_0(T^1M)$ be compactly supported functions so that $\psi^\pm_k$ converges strongly to $\psi^\pm$, respectively. Then for any $\epsilon>0$ there exists $t_0 = t_0(\epsilon), k_0 = k_0(\epsilon)>0$  so that for any $t>t_0$, $k>k_0$ we have that

\begin{equation}\label{eq:assymptoticcounting}
    \frac{\sigma^{+}_{D^{-}_k}(\psi^-_k)\cdot  \sigma^{-}_{D^{+}_k}(\psi^+_k)}{\delta(\Gamma_k) ||m^k_{\rm{BM}}||} -\epsilon \leq \frac{N_{\psi^{-}_k, \psi^{+}_k}(t)}{e^{\delta(\Ga_k) t}} \leq \frac{\sigma^{+}_{D^{-}_k}(\psi^-_k)\cdot  \sigma^{-}_{D^{+}_k}(\psi^+_k)}{\delta(\Gamma_k) ||m^k_{\rm{BM}}||} + \epsilon.
\end{equation}
Here $\sigma^{+}_{D^{-}_k}(\psi^-_k)=\int_{\partial^{1}_{+} D_{k}^{-}}\psi^-_k d\sigma^+_{k}$, and $\sigma^{-}_{D^{+}_k}(\psi^+_k)$ is similarly defined.

\end{theorem}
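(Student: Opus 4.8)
The plan is to follow the proof of \cite[Theorem 3]{ParkkonenPaulin21} while tracking uniformity of all constants along the strongly convergent sequence. The starting point is the mixing-based counting scheme: one writes $N_{\psi^-_k,\psi^+_k}(t)$ as a sum over $\gamma\in\Gamma_k/\sim$ of products $\psi^-_k(v^-_\gamma)\psi^+_k(v^+_\gamma)\mathbbm{1}_{\ell_\gamma\le t}$, and then, following \cite[Section 3]{ParkkonenPaulin21}, one approximates this sum by integrals of the form $\int g^{t/2}_*(\text{localized skinning data around }\partial^1_+D^-_k)$ paired against the corresponding unstable data around $\partial^1_-D^+_k$, using the thickened dynamical neighborhoods $\mathcal{V}_{\eta,\eta'}(\Omega^\pm)$ introduced before Proposition \ref{prop:radiuscontinuity}. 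The key inputs are: (i) the disintegration of $\tilde{m}_{\rm BM}$ along the fibrations $f^\pm_{D}$ against the skinning measures on the strong stable/unstable manifolds $\mu_{W^\pm(v)}$, which is where the radius-continuity hypothesis is used to bound the error from replacing characteristic functions of balls by their $g^s$-flowed thickenings; (ii) exponential mixing of the geodesic flow for $m^k_{\rm BM}$, which turns the paired integral into $\frac{\sigma^+_{D^-_k}(\psi^-_k)\sigma^-_{D^+_k}(\psi^+_k)}{\delta(\Gamma_k)\|m^k_{\rm BM}\|}e^{\delta(\Gamma_k)t}$ plus an exponentially small remainder.

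First I would fix, using Proposition \ref{prop:radiuscontinuity}, a radius $R$ and, for each target $\epsilon$, a scale $\eta=\eta(\epsilon,R)$ such that the uniform radius-continuity estimate $\mu^k_{W^\pm(v)}(B^\pm(v,(1+r)R))\le e^\epsilon\mu^k_{W^\pm(v)}(B^\pm(v,R))$ holds for all large $k$ and all $v$ in (a neighborhood of) the relevant normal bundles; the compact supports of $\psi^\pm_k$, together with strong convergence of $D^\pm_k$ and of $\psi^\pm_k$, guarantee that all the dynamical neighborhoods involved sit inside a fixed compact set of $T^1M$ pulled back by $\varphi_k$, so every estimate can be made uniform in $k$. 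Next I would run the Parkkonen--Paulin decomposition: cover the supports of the $\psi^\pm_k$ by finitely many dynamical boxes $V^\mp_{v_i,\eta,\eta'}$, write $N_{\psi^-_k,\psi^+_k}(t)$ as a finite sum of contributions from pairs of boxes, and for each pair express the count as $\int_{t_1}^{t}$ of a correlation $\int_{T^1M_k}(\text{test function from }D^-_k)\circ g^{-s}\cdot(\text{test function from }D^+_k)\,dm^k_{\rm BM}$; the test functions are the pushforwards of localized skinning measures, which are genuine $C^\infty_0$ functions after the $\mathcal{V}_{\eta,\eta'}$-thickening. Here I would invoke \cite{EdwardsOh21} together with Theorem \ref{thm:uniformgap}: the uniform Lax--Phillips spectral gap gives uniform exponential mixing, i.e. constants $c_0,\kappa_0>0$ independent of $k$ with $|\int f\cdot(g^{-s}h)\,dm^k_{\rm BM}-\frac{1}{\|m^k_{\rm BM}\|}\int f\,dm^k_{\rm BM}\int h\,dm^k_{\rm BM}|\le c_0 e^{-\kappa_0 s}\|f\|_{C^\ell}\|h\|_{C^\ell}$.

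Then I would assemble: integrating the mixing estimate over $s\in[t_1,t]$ produces the main term $\frac{\sigma^+_{D^-_k}(\psi^-_k)\sigma^-_{D^+_k}(\psi^+_k)}{\delta(\Gamma_k)\|m^k_{\rm BM}\|}e^{\delta(\Gamma_k)t}$ — using that $\int$ of the localized skinning test functions against $m^k_{\rm BM}$ recombines, via the disintegration, to $\sigma^+_{D^-_k}(\psi^-_k)$ and $\sigma^-_{D^+_k}(\psi^+_k)$ up to the $e^{\pm\epsilon}$ radius-continuity factors — plus an error controlled by $e^{-\kappa_0 t}$ times the $C^\ell$-norms of the test functions. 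The $C^\ell$ norms of the pushforward test functions are bounded uniformly in $k$ because of strong convergence (they converge smoothly on compact sets) and because the normal bundles $\partial^1_\pm\widetilde{D^\pm_k}$ converge smoothly; Corollary \ref{coro:convergenceskinning} and Proposition \ref{prop:convergenceBowen} then give $\|\sigma^\mp_{D^\pm_k}\|\to\|\sigma^\pm_{D^\mp}\|$ and $\|m^k_{\rm BM}\|\to\|m_{\rm BM}\|$, and \cite[Theorem 1.5]{McMullen99} gives $\delta(\Gamma_k)\to\delta(\Gamma)$, which upgrades the $k$-uniform estimate to the asymptotic form stated. Choosing $t_0$ large enough that $c_0 e^{-\kappa_0 t_0}(\sup_k\|\cdot\|_{C^\ell})^2<\epsilon/2$ and $k_0$ large enough that the radius-continuity distortion and the measure-convergence errors are $<\epsilon/2$ yields \eqref{eq:assymptoticcounting}.

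The main obstacle I expect is the replacement of the \emph{radius-H\"older-continuity} hypothesis of \cite[Theorem 3]{ParkkonenPaulin21} by mere \emph{radius-continuity}, uniformly in $k$: the H\"older version is what lets Parkkonen--Paulin get a clean power-saving error term directly, and without it one must instead absorb the ball-mass distortion into an $e^{\pm\epsilon}$ multiplicative fudge (as already signalled in the remark following Theorem \ref{mainthm:counting} and Proposition \ref{prop:radiuscontinuity}), which is why the statement is phrased with an additive $\epsilon$ rather than with an explicit $O(e^{-\kappa t})$. Carefully checking that this substitution still goes through \emph{uniformly} — that the finitely many boxes, the conformal-factor bounds between the $\mu^k_{W^\pm(v_i)}$, and the thickening scale $\eta$ can all be chosen independently of $k$ for large $k$ — is the technical heart, and it is precisely what Proposition \ref{prop:radiuscontinuity} was set up to provide; the rest is bookkeeping combined with the already-established convergence of the measures and the spectral gap. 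The detailed verification of the Parkkonen--Paulin machinery under radius-continuity is deferred to the material surrounding this theorem, with most arguments quoted verbatim from \cite{ParkkonenPaulin21}.
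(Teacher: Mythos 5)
Your proposal matches the paper's proof essentially step for step at the conceptual level: both run the Parkkonen--Paulin mixing-based counting scheme, substitute uniform radius-continuity via Proposition \ref{prop:radiuscontinuity} for the radius-H\"older hypothesis (accepting the additive $\epsilon$ in place of a power-saving error, exactly as you flag), and feed the uniform Lax--Phillips spectral gap (Theorem \ref{thm:uniformgap}) into \cite{EdwardsOh21} to get $k$-independent exponential mixing constants, with the measure-convergence results (Corollary \ref{coro:convergenceskinning}, Proposition \ref{prop:convergenceBowen}, \cite[Theorem 1.5]{McMullen99}) upgrading the uniform estimate to the stated asymptotic. The paper's execution makes this concrete through explicit test functions $\Phi^\pm_k=(H^\pm_k\widetilde{\psi^\pm_k})\circ f^\mp_{D^\pm_k}\chi^\pm_k$ and a two-sided evaluation of the correlation integral $I_k(T)$, but that is the same skeleton you outline.
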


\begin{proof}
Since both terms in (\ref{eq:assymptoticcounting}) are bilinear in $\psi^{\pm}_{k}$, we can assume without lose of generality that, by using a partition of unity, the support of $\psi^{\pm}_{k}$ is contained in a small relatively compact open set $U^{\pm}_{k}$ in $T^{1} M_k$, and there is a small relatively compact open set $\widetilde{U^{\pm}_{k}}$ in $T^{1}\HH^{n}$ such that the restriction of the quotient map $q_k: T^{1}\HH^{n}\rightarrow T^{1} M_{k}$ to $\widetilde{U^{\pm}_{k}}$ is a diffeomorphism to $U^{\pm}_{k}$. Define  $\widetilde{\psi^\pm_k}\in C^\infty_0(T^1\mathbb{H})$ with support in $\widetilde{U^{\pm}_{k}}$ and coinciding with  $\psi^\pm\circ q_k$ on $\widetilde{U^{\pm}_{k}}$. Similarly, we can define a compactly supported function $\tilde{\psi}\in C^\infty_0(T^1\mathbb{H})$ corresponding to $\psi$. Observe that we can choose the lifts $\widetilde{U^{\pm}_{k}}$ and $\widetilde{\psi^{\pm}_{k}}$ appropriately such that  $\widetilde{\psi^\pm_k}$ converges strongly to $\widetilde{\psi^\pm}$ and
\[\int_{\partial^1_\pm \widetilde{D^\mp_k}} \widetilde{\psi^\mp_k} d\sigma^\pm_{\widetilde{D^\mp_k}}  = \int_{\partial^1_\pm D^\mp_k} \psi^\mp_k d\sigma^\pm_{\partial D^\mp_k},
\]
where $\widetilde{D^{\pm}_k}$ are lifts of $D^{\pm}_k$. 
From now  on, we will distinguish components of $\widetilde{D^\pm_k}$. By  abuse of notation we still denote by $D^\pm_k$ a connected component of $\widetilde{D^\pm_k}$, which we assume is the only connected component of $\widetilde{D^{\pm}_{k}}$ so that the intersection of $\partial^1_\mp D^\pm_k$ with $\widetilde{U^{\pm}_{k}}$ is non-empty by using partition of unity. Observe that then we can label other components by $\Gamma_k$ left action $\gamma\mapsto\gamma D^\pm_k$. These labels are redundant (i.e. label the same set) if and only if $\gamma_1^{-1}\gamma_2$ belongs to the stabilizer of $D^\pm_k$. 

Take $\eta, R>0$ and $k$ sufficiently large so that the statement of Proposition \ref{prop:radiuscontinuity} applies for the sequence of convergent precompact subsets $(\Omega^\pm_k:=\partial^1_\mp \widetilde{D^\pm_k} \cap supp(\widetilde{\psi^\pm_k}))_{k\in\mathbb{N}}$. We will fix $R>0$ from now on, but will keep taking smaller (independent of $k$) $\eta$.  Observe that for small, fixed $\tau>0$ we have  the inclusion

\[\mathcal{V}_{\eta e^{-\tau},Re^{-\tau}} (\Omega^\pm_k) \subset \mathcal{V}_{\eta,R} (\Omega^\pm_k)
\]
is precompact in each slice $V^{\pm}_{w,\eta,R}$, and converges as a whole to $\mathcal{V}_{\eta e^{-\tau},Re^{-\tau}} (\Omega^\pm) \subset \mathcal{V}_{\eta,R} (\Omega^\pm)$ in the usual sense. If by $\mathbbm{1}_A$ we denote the characteristic function of a set $A$, then we can construct smooth functions $\chi^\pm_k\in C^\infty(T^1 \mathbb{H}^n)$ so that the following items hold

\begin{enumerate}
    \item For $w\in \Omega^\pm_k, v \in W^{0\mp}(w)$ $$\mathbbm{1}_{\mathcal{V}_{\eta e^{-\tau},Re^{-\tau}} (\Omega^\pm_k)}(v) \leq \chi^\pm_k(v) \leq \mathbbm{1}_{\mathcal{V}_{\eta ,R} (\Omega^\pm_k)}(v).$$
    \item The Sobolev norms $\Vert \chi^\pm_k \Vert_\beta$ are uniformly bounded (i.e. independent of $k$), where $\beta$ is the Sobolev norm appearing in the statement of \cite[Theorem 1.1]{EdwardsOh21}. 
    \item For any $w\in \Omega^\pm_k$ we have that
    \[e^{-\epsilon} \nu^\pm_w (V^\mp_{w,\eta,R})  \leq \int_{V^\mp_{w,\eta,R}} \chi^\pm_k d\nu^\pm_w \leq \nu^\pm_w (V^\mp_{w,\eta,R})
    \]
    for $\epsilon>0$ independent of $k$, where $d\nu^\pm_w := dsd\mu_{W^\mp(w)}$.
\end{enumerate}

In order to define the test functions to apply exponential mixing, we start with the functions $H^\pm_k:\partial^1_\mp\widetilde{ D^\pm_k} \rightarrow \mathbb{R}$ defined by

\[H^\pm_k(w) = \frac{1}{\int_{V^\mp_{w,\eta,R}} \chi^\pm_k d\nu^\pm_w}.
\]

Let $\Phi^\pm_k:T^1 \mathbb{H}^n\rightarrow \mathbb{R}$ defined by
\[\Phi^\pm_k = (H^\pm_k \widetilde{\psi^\pm_k})\circ f^\mp_{D^\pm_k}\chi^\pm_k.
\]
By construction, we have that $\Vert \Phi^\pm_k \Vert_\beta$ are uniformly bounded and have support in $\mathcal{V}_{\eta,R} (\Omega^\pm_k)$. Moreover, $\Phi^\pm_k$ are non-negative, measurable functions satisfying

\begin{equation}\label{eq:testfunctionintegral}
\int_{T^1\mathbb{H}^n}\Phi^\pm_k d\tilde{m}^k_{\rm{BM}} = \int_{\partial^1_\mp D^\pm_k} \psi^\pm_k d\sigma_{k}^\mp
\end{equation}

Following \cite{ParkkonenPaulin21}, we will estimate in two ways the quantity

\begin{equation}
    I_k(T) := \int^T_0 e^{\delta(\Gamma_k) t} \sum_{\gamma\in\Gamma_k} \int_{T^1\mathbb{H}^n} (\Phi^-_k\circ g^{-t/2})(\Phi^+_k\circ g^{t/2}\circ \gamma^{-1}) d\tilde{m}^k_{\rm{BM}}dt
\end{equation}

By \cite[Theorem 1.1]{EdwardsOh21} and Theorem \ref{thm:uniformgap} there exist uniform $\kappa>0, O(.)$ such that

\begin{equation}\label{eq:firstassympI}
\begin{split}
    I_k(T) &= \int^T_0 e^{\delta(\Gamma_k) t} \left(\frac{1}{\Vert m^k_{\rm{BM}}\Vert} \int_{T^1\mathbb{H}^n}\Phi^-_k d\tilde{m}^k_{\rm{BM}} \int_{T^1\mathbb{H}^n}\Phi^+_k d\tilde{m}^k_{\rm{BM}} + O(e^{-\kappa t}\Vert \Phi^-_k \Vert_\beta\Vert \Phi^+_k \Vert_\beta)\right)dt\\
    &=\frac{e^{\delta(\Gamma_k) T}}{\delta(\Gamma_k)\Vert m^k_{\rm{BM}}\Vert} \int_{\partial^1_- D^-_k} \psi^- d\sigma_{k}^-  \int_{\partial^1_+ D^+_k} \psi^+ d\sigma_{k}^+ + \int^T_{0} e^{\delta(\Gamma_k) t} O(e^{-\kappa t}\Vert \Phi^-_k \Vert_\beta\Vert \Phi^+_k \Vert_\beta)dt\\
    &=e^{\delta(\Gamma_k) T} \left( \frac{\sigma^{+}_{D^{-}_k}(\psi^-_k)\cdot  \sigma^{-}_{D^{+}_k}(\psi^+_k)}{\delta(\Gamma_k) \Vert m^k_{\rm{BM}}\Vert} + e^{-\delta(\Gamma_k) T}\int^T_{0} e^{\delta(\Gamma_k) t} O(e^{-\kappa t}\Vert \Phi^-_k \Vert_\beta\Vert \Phi^+_k \Vert_\beta)dt\right)
\end{split}
\end{equation}
where we used (\ref{eq:testfunctionintegral}) for the second equality. Observe in the final line that we can make the error term $e^{-\delta(\Gamma_k) T}\int^T_{0} e^{\delta(\Gamma_k) t} O(e^{-\kappa t}\Vert \Phi^-_k \Vert_\beta\Vert \Phi^+_k \Vert_\beta)dt$ arbitrarily small for any $T>T_0$, where $T_0$ sufficiently large and independent of $k$.

Now we use a second way to compute this integral $I_{k}(T)$. Let $\delta_k=\delta(\Ga_k)$. We interchange the integral over $t$ and the summation over $\gamma$. Then 
$$I_{k}(T)=\sum_{\gamma\in \Gamma_k} \int_{0}^{T} e^{\delta_{k} t} \int_{T^{1}\HH^{n}}(\Phi^{-}_{k} \circ g^{-t/2})(\Phi^{+}_{k}\circ g^{t/2}\circ \gamma^{-1})d \tilde{m}^{k}_{\rm{BM}} dt. $$
Suppose that if $v\in T^{1}\HH^{n}$ belongs to the support of $(\Phi^{-}_{k} \circ g^{-t/2})(\Phi^{+}_{k}\circ g^{t/2}\circ \gamma^{-1})$, then 
$$v\in g^{t/2} \V_{\eta, R}(\partial^{1}_{+}D_{k}^{-})\cap g^{-t/2} \V_{\eta, R}(\gamma \partial^{1}_{-}D_{k}^{+}).$$
Then by \cite[Lemma 7]{ParkkonenPaulin21}, which is proved by using hyperbolic geometry in $\HH^{n}$, we have the following 
\begin{equation}
\label{eq1}
d(w_{k}^{\pm}, v_{\gamma}^{\pm})=O(\eta+e^{-\ell_{\gamma}/2})
\end{equation}
where $w_{k}^{-}=f^{+}_{D_{k}}(v), w_{k}^{+}=f^{-}_{\gamma D_{k}^{+}}(v)$, $v_{\gamma}^{\pm}$ are endpoints of the common perpendicular between $D_{k}^{-}$ and $\gamma D_{k}^{+}$, and $\ell_{\gamma}$ is the length of the common perpendicular. Since the Lipschitz norm of $\widetilde{\psi^\pm_k}$ are uniformly bounded, and in particular bounded by the $\beta$ Sobolev norm of $\psi^\pm_k$, we have 
$$|\widetilde{\psi^{\pm}_{k}}(w^{\pm}_{k})-\widetilde{\psi^{\pm}_{k}}(v^{\pm}_{\gamma})|=O((\eta+e^{-\ell_{\gamma}/2})|| \psi_{k}^{\pm}||_{\beta}).$$
If we define $\hat{\Phi}^{\pm}_{k}=H^{\pm}_{k}\circ f^{\mp}_{D^{\pm}_{k}}\chi^{\pm}_{k}$ so that $\Phi^\pm_k= (\widetilde{\psi^\pm_k}\circ f^\mp_{D_{k}^{\pm}})\hat{\Phi}^\pm_k$, by applying the previous equation we obtain
\begin{equation}
\begin{aligned}
I_{k}(T)=\sum_{\gamma \in \Gamma_{k}} [\psi_{k}^{\pm}(v_{\gamma}^{-}) \psi_{k}^{\pm}(v^{+}_{\gamma})&+O((\eta+e^{-\ell_{\gamma}/2})|| \psi^{-}_{k}||_{\beta} || \psi^{+}_{k}||_{\beta})]\times \\
&\int_{0}^{T} e^{\delta_{k}t} \int_{v\in T^{1}\HH^{n}} \hat{\Phi}^{-}_{k}(g^{-t/2}v) \hat{\Phi}^{+}_{k}(\gamma^{-1} g^{t/2}v) d\tilde{m}_{\rm{BM}}^{k}(v)dt, 
\end{aligned}
\end{equation}
for $O(.)$ independent of $k$.

We now related another test function to  $\hat{\Phi}^{\pm}_{k}$ following \cite[Lemma 8]{ParkkonenPaulin21}. Let $h^{\pm}_{k}: T^{1}\HH^{n}\rightarrow \left[ 0, \infty \right] $ be the  $\Gamma_{k}$-invariant measurable map defined by
\begin{equation}
\label{def:h}
h^{\mp}_{k}(w)=\dfrac{1}{2\eta \mu_{W^{\pm}_{w}}(B^{\pm}(w, R))}
\end{equation}
if $\mu_{W^{\pm}(w)}(B^{\pm}(w, R))>0$, and $h^{\pm}_{k}(w)=0$ otherwise. 
We define the test function $\phi^{\mp}_{k}=\phi^{\mp}_{\eta, R, \Omega^{\pm}_{k}}: T^{1}\HH^{n}\rightarrow \left[ 0, \infty\right] $ by 
$$\phi^{\mp}_{k}=h^{\mp}_{k}\circ f^{\pm}_{D^{\mp}_{k}}\mathbbm{1}_{\V_{\eta, R}(\Omega^{\mp}_{k})}.$$
By the properties of $\chi_{k}^{\pm}$, we have
$$\phi^{\pm}_{\eta e^{-\tau}, R e^{-\tau}, \partial^{1}_{\mp} \Omega^{\pm}_{k}} e^{-\epsilon}\leq \hat{\Phi}^{\pm}_{k}\leq \phi^{\pm}_{k}. $$
Hence, it suffices to consider the integral 
\begin{equation}
\label{eq2}
\begin{split}
i_{k}(T)=\sum_{\gamma \in \Gamma_{k}} [\psi_{k}^{\pm}(v_{\gamma}^{-}) \psi_{k}^{\pm}(v^{+}_{\gamma})&+O((\eta+e^{-\ell_{\gamma}/2})|| \psi^{-}_{k}||_{\beta} || \psi^{+}_{k}||_{\beta})]\times \\ &\int^{T}_{0} e^{\delta_{k} t} \int_{T^{1} \HH^{n}} (\phi^{-}_{k}\circ g^{-t/2})(\phi^{+}_{k}\circ g^{t/2}\circ \gamma^{-1})d \tilde{m}^{k}_{\rm{BM}} dt.
\end{split}
\end{equation}
By the definition of $\phi_{k}^{\pm}$,  the right hand side of \eqref{eq2} is equal to 
\begin{equation}
\begin{split}
\sum_{\gamma\in \Gamma_{k}} &[\psi_{k}^{\pm}(v_{\gamma}^{-}) \psi_{k}^{\pm}(v^{+}_{\gamma})+O((\eta+e^{-\ell_{\gamma}/2})|| \psi^{-}_{k}||_{\beta} || \psi^{+}_{k}||_{\beta})]\times \\ &\int_{0}^{T} e^{\delta_{k}t} \int_{T^{1}\HH^{n}} h^{-}_{k}\circ f^{+}_{D_{k}^{-}}(g^{-t/2}v)h^{+}_{k}\circ f^{-}_{D_{k}^{+}}(\ga^{-1} g^{t/2}v)
\times \mathbbm{1}_{\V_{\eta, R}(\Omega_{k}^{-})}(g^{-t/2}v)\mathbbm{1}_{\V_{\eta, R}(\Omega^{+}_{k})}(\gamma^{-1}g^{t/2}v)d\tilde{m}_{\rm{BM}}^{k}dt.
\end{split}
\end{equation}
By the $\Gamma$-invariance of $h^{\pm}_{k}$, one has 
$$h^{-}_{k} \circ f^{+}_{D^{-}_{k}}(g^{-t/2}v)=e^{-\delta_{k}(t/2)} h^{-}_{k, e^{-t/2}R}(g^{t/2}w^{-}_{k}),$$
$$h^{+}_{k}\circ f^{-}_{D^{+}_{k}}(\gamma^{-1} g^{t/2}v)=e^{-\delta_{k}(t/2)}h^{+}_{k, e^{-t/2}R}(g^{-t/2}w^{+}_{k})$$
where $w^{-}_{k}=f^{+}_{D^{-}_{k}}(v)$,  $w^{+}_{k}=f^{-}_{\gamma D^{+}_{k}}(v)=\gamma f^{-}_{D^{+}_{k}} (\gamma^{-1}v)$, and $h^{-}_{k, e^{-t/2}R}$ is defined the same as in \eqref{def:h} except we  replace $R$ by $e^{-t/2}R$ . Therefore, 
$$h^{-}_{k}\circ f^{+}_{D_{k}^{-}}(g^{-t/2}v)h^{+}_{k}\circ f^{-}_{D_{k}^{+}}(\ga^{-1} g^{t/2}v)=e^{-\delta_{k}t} h^{-}_{k, e^{-t/2}R}(g^{t/2}w^{-}_{k})h^{+}_{k, e^{-t/2}R}(g^{-t/2}w^{+}_{k}).$$

The remaining part $\mathbbm{1}_{\V_{\eta, R}(\Omega_{k}^{-})}(g^{-t/2}v)\mathbbm{1}_{\V_{\eta, R}(\Omega^{+}_{k})}(\gamma^{-1}g^{t/2}v)$ if nonzero if and only if 
$$v\in g^{t/2} \V_{\eta, R}(\Omega_{k}^{-})\cap \gamma g^{-t/2} \V_{\eta, R}(\Omega_{k}^{+})=\V_{\eta, e^{-t/2}R}(g^{t/2}\Omega^{-}_{k})\cap \V_{\eta, e^{-t/2}R}(\gamma g^{-t/2}\Omega^{+}_{k}).$$

By \cite[Lemma 7]{ParkkonenPaulin21}, there exist constants $t_{0}>0$ and $c_{0}$ (independent of $k$), such that if $t\geq t_{0}$, the followings holds: there exists a common perpendicular $\alpha_{\gamma}$ from $D^{-}_{k}$ to $\gamma(D^{+}_{k})$ with 
\begin{enumerate}
\item $\vert \ell_{\gamma}-t \vert\leq 2\eta+c_{0} e^{-t/2}$,
\item $d(\pi(v^{\pm}_{\ga}), \pi(w^{\pm}_{k}))\leq c_{0}e^{-t/2},$
\item $d(\pi(g^{\pm t/2}w^{\mp}_{k}), \pi(v))\leq \eta+c_{0}e^{-t/2}$. 

\end{enumerate}

For all $\gamma\in \Gamma_{k}$ and $T\geq t_{0}$, we define
$$\A_{k, \gamma}(T)=\{ (t, v)\in [t_{0}, T]\times T^{1}\HH^{n}: v\in \V_{\eta, e^{-t/2}R}(g^{t/2}\Omega^{-}_{k})\cap \V_{\eta, e^{-t/2}R}(\gamma g^{-t/2}\Omega^{+}_{k})\},$$
and the integral
\begin{equation}
\begin{aligned}
j_{k, \gamma}(T)& =\int\int_{(t, v)\in \A_{k, \gamma}(T)} h^{-}_{k, e^{-t/2}R}(g^{t/2}w^{-}_{k})h^{+}_{k, e^{-t/2}R} (g^{-t/2} w^{+}_{k}) dt d \tilde{m}_{BM}^{k}(v)\\
&=\dfrac{1}{(2\eta)^{2}} \int \int_{(t, v)\in \A_{k, \gamma}(T)}\dfrac{dt d\tilde{m}^{k}_{BM}(v)}{\mu_{W^{+}(w^{-}_{t})}(B^{+}(w^{-}_{t}, r_{t})) \mu_{W^{-}(w^{+}_{t})} (B^{-}(w^{+}_{t}, r_{t}))}
\end{aligned}
\end{equation}
where 
$$r_{t}=e^{-t/2}R, \quad w^{-}_{t}=g^{t/2}w^{-}_{k}, \quad w^{+}_{t}=g^{-t/2} w^{+}_{k}. $$

There exists then a constant $c''_{0}>0$ (independent of $k$) such that for $T\geq t_{0}$, one has

\begin{equation}\label{eq:errorik}
\begin{split}
    -c''_{0}+\sum_{\ga\in \Ga_{T-O(\eta+e^{-\ell_{\ga}/2}), -\Op, k}} [\psi_{k}^{\pm}(v_{\gamma}^{-}) \psi_{k}^{\pm}(v^{+}_{\gamma})&+O((\eta+e^{-\ell_{\gamma}/2})|| \psi^{-}_{k}||_{\beta} || \psi^{+}_{k}||_{\beta})] j_{k, \ga}(T)\\&\leq i_{k}(T)\leq\\  c''_{0}+\sum_{\ga\in \Ga_{T+\Op, \Op, k}} [\psi_{k}^{\pm}(v_{\gamma}^{-}) \psi_{k}^{\pm}(v^{+}_{\gamma})&+O((\eta+e^{-\ell_{\gamma}/2})|| \psi^{-}_{k}||_{\beta} || \psi^{+}_{k}||_{\beta})] j_{k, \ga}(T+\Op),
\end{split}
\end{equation}
where $\Ga_{s, r, k}=\{\ga\in \Ga_k| t_0+2+c_0\leq \ell_\ga\leq s, v^{\pm}_{\ga}\in N_{r}\Omega^{\pm} \}$ for all $s, r\in \mathbb{R}$.

\begin{claim}\label{claim:jsize}
For any $\epsilon>0$, if $\eta$ is small enough and $\ell_{\gamma}$ is large enough, then 
$$j_{k, \gamma}(T)=e^{O(\eta+e^{-\ell_{\ga}/2})}e^{O(\epsilon^{c'})}\dfrac{(2\eta+O(e^{-\ell_{\ga}/2}))^{2}}{(2\eta)^{2}},$$
for $c'>0$ independent of $k$.
\end{claim}

\begin{proof}
Since $(\HH^{n}, \Gamma_{k})$ has radius-continous strong stable/unstable ball masses. By \cite[Lemma 11]{ParkkonenPaulin21}, for  every $\epsilon>0$ and every $(t, v)\in \A_{k, \ga}(T)$, one has 
$$\mu_{W^{\pm}(w_{t}^{\mp})}(B^{\pm}(w^{\mp}_{t}, r_{t}))=e^{O(\epsilon)} \mu_{W^{\pm}(v_{\gamma})} (B^{\pm}(v_{\ga}, r_{\ell_{\gamma}}))$$
if $\eta$ is small enough and $\ell_{\ga}$ is large enough, independent of $k$. Here $v_{\gamma}$ denote the midpoint of the common perpendicular from $D^{-}_{k}$ to $\ga(D^{+}_{k})$. Hence,
$$j_{k, \gamma}(T)=\dfrac{e^{O(\epsilon)}\int\int_{(t, v)\in \A_{k, \gamma}(T)} dt d\tilde{m}_{\rm{BM}}^{k}(v)}{\mu_{W^{+}(v_{\gamma})} (B^{+}(v_{\ga}, r_{\ell_{\gamma}}))\mu_{W^{-}(v_{\gamma})} (B^{-}(v_{\ga}, r_{\ell_{\gamma}}))}$$
By \cite[Lemma 10]{ParkkonenPaulin21},  for every $(t, v)\in \A_{k, \ga}(T)$, one has
$$dt d\tilde{m}_{\rm{BM}}^{k}(v)=e^{O(\eta+e^{-\ell_{\ga}/2})}dt ds d\mu_{W^{-}(v_{\ga})}(v') d\mu_{W^{+}(v_{\ga})}(v'')dt$$
where $v'=f^{+}_{HB_{-}(v_{\ga})}(v)$ and $v''=f^{-}_{HB_{+}(v_{\ga})}(v)$. By \cite[Lemma 9]{ParkkonenPaulin21}, the distances $d(v, v_{\ga}), d(v', v_{\ga})$ and $d(v'', v_{\ga})$ are $O(\eta+e^{-t/2})$. Combining these equations together, the claim follows. 
\end{proof}

Applying then Claim \ref{claim:jsize} in equation (\ref{eq:errorik}) we get

\begin{equation}\label{eq:2ndassympI}
\begin{split}
I_k(t) = \sum_{\gamma\in \Gamma_{k}} &[\psi_{k}^{\pm}(v_{\gamma}^{-}) \psi_{k}^{\pm}(v^{+}_{\gamma})] + O(\eta e^{\delta_k t}),
\end{split}
\end{equation}
for $t$ sufficiently large independent of $k$, and $O(.)$ independent of $k,\eta$. Then by multiplying $e^{-\delta_k t}$ to equations (\ref{eq:firstassympI}), (\ref{eq:2ndassympI}) we get that for fixed $\eta>0$

\begin{equation}
    \frac{\sigma^{+}_{D^{-}_k}(\psi^-_k)\cdot  \sigma^{-}_{D^{+}_k}(\psi^+_k)}{\delta(\Gamma_k) ||m^k_{\rm{BM}}||} - O(\eta) \leq \frac{N_{\psi^{-}_k, \psi^{+}_k}(t)}{e^{\delta(\Ga_k) t}} \leq \frac{\sigma^{+}_{D^{-}_k}(\psi^-_k)\cdot  \sigma^{-}_{D^{+}_k}(\psi^+_k)}{\delta(\Gamma_k) ||m^k_{\rm{BM}}||} + O(\eta)
\end{equation}
for $t$ sufficiently large independent of $k$ and $O(.)$ independent of $k,\eta$, from where the result follows.

\end{proof}

\medskip
\noindent
{\bf{More about the proof of Theorem \ref{mainthm:counting}}:}
As $(D^\pm_k)_{k\in\mathbb{N}}$ is a sequence of well-positioned convex sets in $M_k$ which converges strongly to a well-positioned set $D^\pm$ in $M$, we can select $\psi^\pm_k \in C^\infty_0(T^1M_k)$, $\psi^\pm\in C^\infty_0(T^1M)$ be compactly supported functions so that $(\psi^\pm_k)_{k\in\mathbb{N}}$ converges strongly to $\psi^\pm$ and $\psi^\pm_k \equiv 1$ in $supp(\sigma^\mp_{\partial D^\pm_k})$. Hence in the notation of Theorem \ref{thm:appendix}
\[\N_{\psi^-_k, \psi^+_k}(t)  =\N_{D^{-}_k, D^{+}_k}(t),\quad \sigma^\mp_{D^\pm_k}(\psi^\pm_k)= \Vert \sigma^\mp_{D^\pm_k}\Vert,
\]
and in particular $\frac{\sigma^{+}_{D^{-}_k}(\psi^-_k)\cdot  \sigma^{-}_{D^{+}_k}(\psi^+_k)}{\delta(\Gamma_k) ||m^k_{\rm{BM}}||}\neq 0$. Then we can restate the conclusion of Theorem \ref{mainthm:counting} by a multiplicative error uniformly close to 1 along the sequence as $t$ gets larger.

\qed

\bibliographystyle{amsalpha}
\bibliography{mybib}

\end{document}